    \newcommand{\norm}[2]{\left\| #1 \right\|_{#2}}
    \newcommand{\hf}{\frac{1}{2}}
    \newcommand{\be}{\begin{equation}}
    \newcommand{\ee}{\end{equation}}
    \newcommand{\nabh}{\nabla_{\! h}}
    \newcommand{\nrm}[1]{\left\| #1 \right\|}
    \def\n{\mbox{\boldmath $n$}}
    \newcommand\dt {{\Delta t}}
     \def\0{\mbox{\boldmath $0$}}
    \newcommand{\cipgen}[3]{\left\langle #1 , #2 \right\rangle_{#3}}
\newcommand{\eipx}[2]{\left[ #1 , #2 \right]_{\rm x}}
\newcommand{\eipy}[2]{\left[ #1 , #2 \right]_{\rm y}}
	\newtheorem{thm}{Theorem}[section]
	\newtheorem{prop}[thm]{Proposition}
	\newtheorem{cor}[thm]{Corollary}
	\newtheorem{lem}[thm]{Lemma}
	\newtheorem{rem}[thm]{Remark}
\begin{document}
	\title{Convergence Analysis of a Preconditioned Steepest Descent Solver for the Cahn-Hilliard Equation with Logarithmic Potential}

	\author{
Amanda E. Diegel \thanks{Department of Mathematics and Statistics, Mississippi State University,  Mississippi State, MS 39762 (adiegel@math.msstate.edu)}
	\and			
Cheng Wang\thanks{Department of Mathematics, The University of Massachusetts, North Dartmouth, MA  02747 (Corresponding author: cwang1@umassd.edu)}	
	\and
Steven M. Wise\thanks{Department of Mathematics, The University of Tennessee, Knoxville, TN 37996 (swise1@utk.edu)} 
	}

	\maketitle
	\numberwithin{equation}{section}

	\begin{abstract}
	In this paper, we provide a theoretical analysis for a preconditioned steepest descent (PSD) iterative solver that improves the computational time of a finite difference numerical scheme for the Cahn-Hilliard equation with Flory-Huggins energy potential. In the numerical design, a convex splitting approach is applied to the chemical potential such that the logarithmic and the surface diffusion terms are treated implicitly while the expansive concave term is treated with an explicit update. The nonlinear and singular nature of the logarithmic energy potential makes the numerical implementation very challenging. However, the positivity-preserving property for the logarithmic arguments, unconditional energy stability, and optimal rate error estimates have been established in a recent work and it has been shown that successful solvers ensure a similar positivity-preserving property at each iteration stage. Therefore, in this work, we will show that the PSD solver ensures a positivity-preserving property at each iteration stage. The PSD solver consists of first computing a search direction (which requires solving a constant-coefficient Poisson-like equation) and then takes a one-parameter optimization step over the search direction in which the Newton iteration becomes very powerful. A theoretical analysis is applied to the PSD iteration solver and a geometric convergence rate is proved for the iteration. In particular, the strict separation property of the numerical solution, which indicates a uniform distance between the numerical solution and the singular limit values of $\pm 1$ for the phase variable, plays an essential role in the iteration convergence analysis. A few numerical results are presented to demonstrate the robustness and efficiency of the PSD solver.

\medskip
	
\noindent
{\bf Key words.} \, Cahn-Hilliard equation, logarithmic Flory Huggins energy potential, positivity preserving, energy stability, preconditioned steepest descent iteration solver, iteration convergence analysis
	
\medskip
	
\noindent
{\bf AMS Subject Classification} \, 35K30, 65L06, 65M12 
\end{abstract}

\section{Introduction}

The Allen-Cahn (AC)~\cite{allen79} (non-conserved dynamics) and Cahn-Hilliard (CH)~\cite{cahn58} (conserved dynamics) equations are well known gradient flows with respect to the total free energy given by
\begin{align*}
    E(\phi)=\int_{\Omega} \left(F(\phi) +\frac{\varepsilon^2}{2}|\nabla \phi|^2\right) d {\bf x} ,
\end{align*}
where $\Omega \subset \mathbb{R}^d$ (with $d=2$ or $d=3$) is a bounded domain, $-1 < \phi < 1$ is the variable of interest often representing the concentration of material components in a two-phase system, $\varepsilon$ is a positive constant associated with the diffuse interface width separating the two phases, and $F$ is a given double-well potential. In this work, we consider the Flory-Huggins energy potential. Specifically, for any $\phi \in H^1 (\Omega)$ with a point-wise bound, i.e.~$-1 < \phi < 1$, the total free energy with Flory-Huggins energy potential is given by 
	\begin{equation}
	\label{CH energy}
E(\phi)=\int_{\Omega}\left( ( 1+ \phi) \ln (1+\phi) + (1-\phi) \ln (1-\phi) - \frac{\theta_0}{2} \phi^2 +\frac{\varepsilon^2}{2}|\nabla \phi|^2\right) d {\bf x} ,
	\end{equation}
where $\theta_0$ is an additional positive constant associated with the diffuse interface width. The Cahn-Hilliard (CH) equation is then an $H^{-1}$ (conserved) gradient flow of the energy functional (\ref{CH energy}) and is given by:	
\begin{align} 
  \partial_t\phi &= \nabla \cdot ( {\cal M} (\phi) \nabla \mu ) ,  
	\label{CH equation-0} 
\\
\mu &:= \delta_\phi E = \ln (1+\phi) - \ln (1-\phi) - \theta_0 \phi - \varepsilon^2 \Delta \phi ,
	\label{CH-mu-0} 
\end{align}
where ${\cal M} (\phi) >0$ is a mobility function. Based on the gradient structure of \eqref{CH equation-0}, the energy dissipation law is derived as 
	\begin{align}
\frac{d}{dt} E(\phi(t))= & \ -\int_{\Omega} {\cal M} (\phi) |\nabla \mu |^2 d {\bf x} \le 0  . 
	\label{energy-decay-rate-CH}
	\end{align}
	For simplicity of presentation, we assume that $\Omega= [0,1]^2$ with periodic boundary conditions but remark that the case with homogeneous Neumann boundary conditions can be analyzed with a similar  strategy. 
 
The free energy with the Flory-Huggins logarithmic potential is generally viewed to be more physically realistic than an energy represented by a polynomial expression since the former can be derived from regular or ideal solution theories~\cite{doi13}. On the other hand, the Flory Huggins energy potential posses a computational challenge since it is associated with a singularity as the phase variable approaches $-1$ or $1$. Indeed, the system~\eqref{CH equation-0} -- \eqref{CH-mu-0} is only well-defined if a point-wise positivity property is imposed, i.e.,  $0 < 1-\phi$ and $0 < 1+\phi$, so that the phase variable remains in the interval $(-1, 1)$. See the related works~\cite{abels09b, abels07, barrett99, miranville11, debussche95, elliott96b, elliott96c, Giorgini17a, Giorgini18, LiD2021a, miranville12, miranville04}, etc. 

For the CH equation with a polynomial approximation in the energy potential, a maximum norm bound could be carefully derived, with the help of a global-in-time $H^2$ analysis. However, such an $L^\infty$ bound turns out to be singularly $\varepsilon^{-1}$-dependent, since the surface diffusion estimate has to be used to balance the nonlinear effects; see the related work in~\cite{guo16}. In terms of an $\varepsilon^{-1}$-independent $L^\infty$ bound, the sharpest theoretical analysis in this area could be found in~\cite{Caffarelli1995}, in which a polynomial pattern energy potential is used with a cut-off approach. On the other hand, for the Cahn-Hilliard equation~\eqref{CH equation-0} -- \eqref{CH-mu-0} with a singular Flory-Huggins energy potential~\eqref{CH energy}, an $L^\infty$ bound is automatically satisfied: $-1 < \phi < 1$, so that the PDE is well-defined. Meanwhile, in spite of such an automatic $L^\infty$ bound, a uniform distance between the solution away from the singular limit values will play a more important role, due to the singular nature in the nonlinear analysis. In fact, for the 2-D CH equation~\eqref{CH equation-0} -- \eqref{CH-mu-0}, the separation property has also been justified at a theoretical level~\cite{abels07, debussche95}, i.e., a uniform distance between the phase variable and the singular limit values ($-1$ and $1$) has been derived, dependent on $\varepsilon$, $\theta_0$ and the initial data. For the 3-D equation, a theoretical proof of the separation property has not been available, while we make such an assumption in this article, to facilitate the numerical iteration analysis. 

In addition, the system defined in~\eqref{CH energy} has a symmetric double-well structure. Notice that $\theta_0 >0$ is an $O (1)$ constant, and many interesting profiles could be obtained by the scientific computing with such a constant scale; see the detailed numerical simulation results in~\cite{chen22a}, with $\theta_0 =3$ and $\theta_0 =3.5$. A careful calculation reveals that, for $\theta_0 >1$, this free energy supports a spatially uniform equilibria solution: $\phi \equiv \pm \phi_* $, with $\phi_* \in (0,1)$ satisfying a steady-state equation: $\ln (1+\phi) - \ln (1-\phi) - \theta_0 \phi  = 0$. Of course, if the initial data does not have a mass average of $\pm \phi_*$, the PDE solution will not convergence to such a trivial steady-state solution, $\phi \equiv \pm \phi_*$. For the Allen-Cahn (AC) equation, the associated $L^2$ gradient flow, the separation property is satisfied with such a minima value of the double well, i.e., $- \phi_* \le \phi \le \phi_*$ at any time, provided that the initial data also satisfied this separation bound. Meanwhile, for the CH equation, this bound will not be satisfied, due to the fact that the maximum principle is not available any more for an $H^{-1}$ gradient flow. In more details, the phase separation constant $\epsilon_0$ for the 2-D Cahn-Hilliard equation, namely $-1 + \epsilon_0 \le \phi \le 1 - \epsilon_0$, depends on both $\theta_0$ and $\varepsilon$, as well as the initial separation constant, since the surface diffusion part has also played an important role in the separation estimate. Also see the related analysis in~\cite{LiD2021a}.  

On the other hand, the value of $\theta_0$ is fixed, with $2 \le \theta_0 \le 4$. For larger values of $\theta_0$ the equilibrium value increases towards the singular value of $\phi= \pm 1$. This may impact the numerical performance; however we do not investigate the effect in this work.  

In terms of the numerical design for the Cahn-Hilliard equation~\eqref{CH equation-0} -- \eqref{CH-mu-0} with logarithmic energy potential, the positivity preserving property generally posses the primary challenge~\cite{Fan17, jeong16, jeong15, LiX16, LiH2017, peng17a, peng17b, qiao14, yang19a}. Regarding a theoretical justification of the positivity-preserving property, a pioneering analysis was reported in \cite{elliott92a} in which the implicit Euler algorithm was applied combined with the finite element approximation in space. The positivity-preserving property is proved, while the unique solvability is theoretically justified under a condition for the time step size, which comes from the implicit discretization of the expansive term. To overcome this shortcoming, a convex splitting numerical scheme is proposed and analyzed in~\cite{chen19b} in which implicit treatment of the singular logarithmic and surface diffusion terms along with an explicit update of the linear expansive term was combined with the standard finite difference spatial approximation. The theoretical properties that have been established for the proposed numerical scheme include unconditional unique solvability, a positivity-preserving property, unconditional energy stability, and an optimal rate of convergence in the $\ell^\infty (0, T; H_h^{-1} ) \cap \ell^2 (0, T; H_h^1)$ norm. In particular, the singular and convex nature of the logarithmic term prevents the numerical solution from reaching the singular limit values of $\pm 1$, and this fact plays an essential role in the positivity-preserving analysis. Such an energy minimization analysis technique has been widely used in various gradient flows, including the phase field equation with Flory-Huggins potential~\cite{chen22b, chen22a, Dong2021a, Dong2022a, dong19b, dong20a, Yuan2021a, Yuan2022a}, the liquid film droplet model~\cite{ZhangJ2021}, the Poisson-Nernst-Planck system~\cite{LiuC2021a, LiuC2022a}, and the reaction-diffusion system~\cite{LiuC2021b, LiuC2022c, LiuC2022b}, etc. 

Although the theoretical analysis has been well-established for the first order convex splitting numerical scheme to the Cahn-Hilliard equation~\eqref{CH equation-0} -- \eqref{CH-mu-0} with Flory-Huggins energy potential, under the condition that it is exactly executed, the numerical implementation turns out to be highly challenging, due to the nonlinear and singular nature of the logarithmic terms involved in the numerical method. The focus of this paper will therefore be centered on the development and analysis of an iterative method for the numerical implementation of a first-order-in-time convex splitting numerical scheme to the Cahn-Hilliard equation~\eqref{CH equation-0} -- \eqref{CH-mu-0} with Flory-Huggins energy potential. For second-order (in time) numerical schemes, the positivity-preserving property and the modified energy stability have also been theoretically established, either in the BDF2 approach~\cite{chen19b} or in the Crank-Nicolson version~\cite{chen22a}, using similar theoretical techniques. However, the numerical implementation and the iteration analysis will be more involved and we reserve this for future work. A na\"{i}ve iterative approach may lead to a numerical solution not satisfying the positivity-preserving property in the iteration process. As an example, the full approximation storage (FAS) multi-grid method was applied in~\cite{chen19b} to implement the proposed scheme while the iteration convergence analysis for the FAS-like multi-grid method was established in~\cite{ChenL2020} for a convex optimization of a polynomial approximation energy potential. Although some convincing numerical results were reported in~\cite{chen19b} with a singular energy potential involved, a theoretical justification of such an iteration convergence analysis is not available. 

In this article, we propose and analyze an alternative iterative method, called the preconditioned steepest descent (PSD) solver, for the numerical implementation of the convex splitting numerical scheme to the Cahn-Hilliard equation~\eqref{CH equation-0} -- \eqref{CH-mu-0} with Flory-Huggins energy potential. The PSD solver for the p-Laplacian equation was considered in a pioneering work~\cite{Huang2007}, while an application of the PSD algorithm to a more general, regularized elliptic equation is analyzed in~\cite{feng2017preconditioned}, in which a much sharper iteration convergence rate has been established due to the higher order diffusion term involved. More applications of the PSD solver have been reported to various gradient flow models~\cite{ChenXC22a, cheng2019a, cheng2019d, cheng2022b, cheng2022a, fengW18a, fengW18b, ZhangJ2021}, etc. The robustness of this approach is demonstrated again in the numerical implementation of the algorithm to the Cahn-Hilliard equation with logarithmic energy potential, as described in~\cite{chen19b}. The key point is to use a linearized version of the nonlinear operator as a pre-conditioner to obtain a search direction. In other words, at each iteration stage, the surface diffusion operator and the $(-\Delta_h )^{-1}$ operator (for the temporal derivative) are kept the same as the original form, and a constant-coefficient linear operator is used to approximate the nonlinear part in the chemical potential expansion. In turn, the resulting equation for the search direction is efficiently solved with the help of FFT, since all the linear operators have eigenfunctions that are exactly the same as the Fourier basis functions. Afterward, with the search direction available, a one-parameter optimization of the corresponding numerical energy functional over the search direction is taken at the iteration stage. In fact, it is a strictly convex optimization in terms of the parameter, with singular and monotone logarithmic terms involved. Again, a careful positivity-preserving analysis ensures a unique solution of this one-parameter optimization, and the positivity of the logarithmic arguments are theoretically justified. Since it is a convex optimization, the Newton's iteration can be efficiently implemented and the positivity property will be preserved in the iteration process if the initial guess is sufficiently accurate. 

To verify the advantage of such a numerical solver, we present an iteration convergence analysis of the PSD iteration algorithm. Based on the fact that the equations can be reformulated as minimization problems involving strictly convex functionals in Hilbert spaces, the convexity analysis enables us to theoretically derive the convergence analysis for the nonlinear iterative solver. However, such an analysis is much more challenging than the gradient equations with a polynomial approximation of the energy potential since a positivity-preserving property must be justified at each iteration stage. Moreover, a uniform distance between the numerical solution and the singular limit values (of $\pm 1$), i.e.~the strict separation property, must be established to pass through the nonlinear estimates associated with the logarithmic terms. More specifically, the convergence estimate at the previous time step gives a discrete $H_h^1$ bound of the initial iteration error. Meanwhile, the non-increasing numerical energy (at each iteration stage) indicates a uniform discrete $\ell^2$ bound of the numerical solution in the iteration process. Furthermore, a careful application of a discrete Sobolev embedding establishes a connection between the discrete $\ell^2$ norm and the corresponding energy norm associated with the preconditioning stage. All these techniques lead to a theoretical justification of the geometric convergence rate for the PSD iteration solver. As a result, an $H_h^1$ convergence estimate for the iteration error leads to the strict separation property of the numerical solution at the next iteration stage, with the help of an inverse inequality. To our knowledge, this is the first result of a nonlinear iteration convergence for an iteration solver applied to a singular energy potential gradient flow.

The rest of this paper is organized as follows. In Section~\ref{sec:numerical scheme}, we review the finite difference spatial discretization and recall the convex splitting numerical scheme for the Cahn-Hilliard equation~\eqref{CH equation-0} -- \eqref{CH-mu-0} with Flory-Huggins energy potential. Some preliminary estimates are derived as well. In Section~\ref{sec: PSD}, the PSD iteration solver is proposed. In Section~\ref{sec:geometric-convergence}, a theoretical analysis of the geometric convergence rate, as well as the positivity-preserving analysis in the iteration process, is provided. Finally, some numerical results are presented in Section~\ref{sec:numerical results} and we provide concluding remarks in Section~\ref{sec:conclusion}.

\section{Review of the numerical scheme} 
\label{sec:numerical scheme}

	\subsection{The finite difference spatial discretization}
	\label{subsec:finite difference}

The spatial discretization notations are excerpted from~\cite{guo16, wise10, wise09}, and the references therein. We summarize the necessary notations below. For $\Omega = [0,1]^2$, and for any $N\in\mathbb{N}$, the mesh size is given by $h := \frac{1}{N}$, and it is assumed that the mesh spacing in the $x$ and $y$ directions are the same. Additionally, the following two uniform, infinite grids are introduced, with grid spacing $h>0$:
	\[
E := \{ p_{i+\hf} \ |\ i\in {\mathbb{Z}}\}, \quad C := \{ p_i \ |\ i\in {\mathbb{Z}}\},
	\]
where $p_i = p(i) := (i-\hf)\cdot h$. With these grids in place, we define three 2-D discrete $N^2$-periodic function spaces: 
	\begin{eqnarray*}
	\begin{aligned}
{\mathcal C}_{\rm per} &:= \left\{\nu: C \times C\rightarrow {\mathbb{R}}\ \middle| \ \nu_{i,j} = \nu_{i+\alpha N,j+\beta N}, \ \forall \, i,j,\alpha,\beta \in \mathbb{Z} \right\},
	\\
{\mathcal E}^{\rm x}_{\rm per} &:=\left\{\nu: E \times C\rightarrow {\mathbb{R}}\ \middle| \ \nu_{i+\frac12,j}= \nu_{i+\frac12+\alpha N,j+\beta N}, \ \forall \, i,j,\alpha,\beta \in \mathbb{Z}\right\} , 
    \\
{\mathcal E}^{\rm y}_{\rm per} &:=\left\{\nu: E \times C\rightarrow {\mathbb{R}}\ \middle| \ \nu_{i,j+\frac12}= \nu_{i+\alpha N,j+\frac12+\beta N}, \ \forall \, i,j,\alpha,\beta \in \mathbb{Z}\right\} , 
	\end{aligned}
	\end{eqnarray*}
in which the identification $\nu_{i,j} = \nu(p_i,p_j)$ was used. The functions of ${\mathcal C}_{\rm per}$ are called {\emph{cell centered functions}}. The functions of ${\mathcal E}^{\rm x}_{\rm per}$ and ${\mathcal E}^{\rm y}_{\rm per}$ are called {\emph{east-west}} and  {\emph{north-south  edge-centered functions}}, respectively. In addition, we define the space of mean zero functions as 
\begin{eqnarray}\label{def:mean-zero-space}
    \mathring{\mathcal C}_{\rm per}:=\left\{\nu\in {\mathcal C}_{\rm per} \ \middle| 0 = \overline{\nu} :=  \frac{h^2}{| \Omega|} \sum_{i,j=1}^m \nu_{i,j} \right\}.
\end{eqnarray} 
Finally, the space  $\vec{\mathcal{E}}_{\rm per} := {\mathcal E}^{\rm x}_{\rm per}\times {\mathcal E}^{\rm y}_{\rm per}$ is introduced. 

The spatial average and difference operators are given by 
	\begin{eqnarray*}
&& A_x \nu_{i+\hf,j} := \frac{1}{2}\left(\nu_{i+1,j} + \nu_{i,j} \right), \quad D_x \nu_{i+\hf,j} := \frac{1}{h}\left(\nu_{i+1,j} - \nu_{i,j} \right),
	\\
&& A_y \nu_{i,j+\hf} := \frac{1}{2}\left(\nu_{i,j+1} + \nu_{i,j} \right), \quad D_y \nu_{i,j+\hf} := \frac{1}{h}\left(\nu_{i,j+1} - \nu_{i,j} \right) , 
	\end{eqnarray*}
with $A_x,\, D_x: {\mathcal C}_{\rm per}\rightarrow{\mathcal E}_{\rm per}^{\rm x}$, $A_y,\, D_y: {\mathcal C}_{\rm per}\rightarrow{\mathcal E}_{\rm per}^{\rm y}$. Similarly, the following notations are introduced: 
	\begin{eqnarray*}
&& a_x \nu_{i, j} := \frac{1}{2}\left(\nu_{i+\hf, j} + \nu_{i-\hf, j} \right),	 \quad d_x \nu_{i, j} := \frac{1}{h}\left(\nu_{i+\hf, j} - \nu_{i-\hf, j} \right),
	\\
&& a_y \nu_{i,j} := \frac{1}{2}\left(\nu_{i,j+\hf} + \nu_{i,j-\hf} \right),	 \quad d_y \nu_{i,j} := \frac{1}{h}\left(\nu_{i,j+\hf} - \nu_{i,j-\hf} \right) , 
	\end{eqnarray*}
with $a_x,\, d_x : {\mathcal E}_{\rm per}^{\rm x}\rightarrow{\mathcal C}_{\rm per}$ and $a_y,\, d_y : {\mathcal E}_{\rm per}^{\rm y}\rightarrow{\mathcal C}_{\rm per}$. The discrete gradient $\nabh:{\mathcal C}_{\rm per}\rightarrow \vec{\mathcal{E}}_{\rm per}$ is defined as 
	\[
\nabh\nu_{i,j} := \left( D_x\nu_{i+\hf, j},  D_y\nu_{i, j+\hf} \right) ,
	\] 
and the discrete divergence $\nabh\cdot :\vec{\mathcal{E}}_{\rm per} \rightarrow {\mathcal C}_{\rm per}$ becomes 
	\[
\nabh\cdot\vec{f}_{i,j} := d_x f^x_{i,j}	+ d_y f^y_{i,j} ,
	\]
where $\vec{f} = (f^x,f^y)\in \vec{\mathcal{E}}_{\rm per}$. The standard 2-D discrete Laplacian, $\Delta_h : {\mathcal C}_{\rm per}\rightarrow{\mathcal C}_{\rm per}$, is given by 
	\begin{align*}
\Delta_h \nu_{i,j} := & \nabla_{h}\cdot\left(\nabla_{h}\phi\right)_{i,j} =  d_x(D_x \nu)_{i,j} + d_y(D_y \nu)_{i,j} 
	\\
= & \ \frac{1}{h^2}\left( \nu_{i+1,j}+\nu_{i-1,j}+\nu_{i,j+1}+\nu_{i,j-1} - 4\nu_{i,j,k}\right).
	\end{align*}
More generally, if $\mathcal{D}$ is a periodic \emph{scalar} function that is defined at all of the edge center points and $\vec{f}\in\vec{\mathcal{E}}_{\rm per}$, then $\mathcal{D}\vec{f}\in\vec{\mathcal{E}}_{\rm per}$, assuming point-wise multiplication, and we may define
	\[
\nabla_h\cdot \big(\mathcal{D} \vec{f} \big)_{i,j,k} = d_x\left(\mathcal{D}f^x\right)_{i,j}  + d_y\left(\mathcal{D}f^y\right)_{i,j}  .
	\]
Specifically, if $\nu\in \mathcal{C}_{\rm per}$, then $\nabla_h \cdot\left(\mathcal{D} \nabla_h  \ \ \right):\mathcal{C}_{\rm per} \rightarrow \mathcal{C}_{\rm per}$ is defined at a point-wise level: 
	\[
\nabla_h\cdot \big(\mathcal{D} \nabla_h \nu \big)_{i,j} = d_x\left(\mathcal{D}D_x\nu\right)_{i,j}  + d_y\left(\mathcal{D} D_y\nu\right)_{i,j}  .
	\]

Finally, we define the following grid inner products:   
	\begin{equation*}
	\begin{aligned}
\langle \nu , \xi \rangle &:= h^2 \sum_{i,j=1}^N  \nu_{i,j}\, \xi_{i,j},\quad \nu,\, \xi\in {\mathcal C}_{\rm per},\quad
&  
\\
 \eipx{\nu}{\xi} & := \langle a_x(\nu\xi) , 1 \rangle ,\quad \nu,\, \xi\in{\mathcal E}^{\rm x}_{\rm per} , 
 \\
 \eipy{\nu}{\xi} &:= \langle a_y(\nu\xi) , 1 \rangle ,\quad \nu,\, \xi\in{\mathcal E}^{\rm y}_{\rm per} ,  
\\
\Big\langle \vec{f}_1 , \vec{f}_2 \Big\rangle &: = \eipx{f_1^x}{f_2^x}	+ \eipy{f_1^y}{f_2^y} , \quad  \vec{f}_i = (f_i^x,f_i^y) \in \vec{\mathcal{E}}_{\rm per}, \ i = 1,2.
	\end{aligned}
	\end{equation*}	
The norms for cell-centered functions are accordingly introduced. If $\nu\in {\mathcal C}_{\rm per}$, then $\nrm{\nu}_2^2 := \langle \nu , \nu \rangle$; $\nrm{\nu}_p^p := \langle |\nu|^p , 1 \rangle$, for $1\le p< \infty$, and $\nrm{\nu}_\infty := \max_{1\le i,j\le N}\left|\nu_{i,j}\right|$. The gradient norms are similarly defined: for $\nu\in{\mathcal C}_{\rm per}$,
	\[
\nrm{ \nabla_h \nu}_2^2 : = \langle \nabh \nu , \nabh \nu \rangle = \eipx{D_x\nu}{D_x\nu} + \eipy{D_y\nu}{D_y\nu} ,
	\]
and for $1\le p<\infty$,
	\begin{equation*}
\nrm{\nabla_h \nu}_p := \left( \eipx{|D_x\nu|^p}{1} + \eipy{|D_y\nu|^p}{1}  \right)^{\frac1p} .
	\end{equation*}
In addition, a discrete $H_h^1$ norm is defined as:
\begin{eqnarray}\label{def:H1h-norm}
    \nrm{\nu}_{H_h^1}^2 : =  \nrm{\nu}_2^2+ \nrm{ \nabla_h \nu}_2^2.
\end{eqnarray}

	\begin{prop}  
	\label{lemma1}\label{lemma 1-0} 
Let $\mathcal{D}$ be an arbitrary periodic, scalar function defined on all of the edge center points. For any $\psi, \nu \in {\mathcal C}_{\rm per}$ and any $\vec{f}\in\vec{\mathcal{E}}_{\rm per}$, the following summation by parts formulas are valid: 
        \begin{subequations}\label{eq:summation-by-parts}
	\begin{align}
\langle \psi , \nabla_h\cdot\vec{f} \rangle & = - \langle \nabla_h \psi , \vec{f} \rangle ,
	\\
\langle \psi , \nabla_h\cdot \left(\mathcal{D}\nabla_h\nu\right) \rangle & = - \langle \nabla_h \psi , \mathcal{D}\nabla_h\nu \rangle .
        \end{align}
	\end{subequations}
	\end{prop}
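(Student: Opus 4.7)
The plan is to prove the first identity \eqref{eq:summation-by-parts}(a) directly from the definitions of the discrete divergence and the grid inner products, and then obtain \eqref{eq:summation-by-parts}(b) as an immediate corollary by choosing $\vec{f} = \mathcal{D}\nabla_h \nu \in \vec{\mathcal{E}}_{\rm per}$, which is legitimate because $\mathcal{D}$ is defined at edge centers and multiplication is taken point-wise.

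For part (a), I would first expand $\langle \psi, \nabla_h \cdot \vec{f}\rangle = h^2 \sum_{i,j=1}^N \psi_{i,j}\bigl(d_x f^x_{i,j} + d_y f^y_{i,j}\bigr)$ and split the sum into its $x$- and $y$-contributions, treating only the $x$-part explicitly (the $y$-part being identical up to relabeling). Writing $d_x f^x_{i,j} = h^{-1}(f^x_{i+\frac12,j} - f^x_{i-\frac12,j})$, the $x$-piece becomes $h\sum_{i,j}\psi_{i,j}f^x_{i+\frac12,j} - h\sum_{i,j}\psi_{i,j}f^x_{i-\frac12,j}$. Then I would apply a discrete index shift $i\mapsto i+1$ in the second sum, which is justified because $\psi$ and $f^x$ are both $N^2$-periodic, so the sum over one full period is invariant under translation. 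This regroups the two sums into $-h\sum_{i,j}(\psi_{i+1,j}-\psi_{i,j}) f^x_{i+\frac12,j} = -h^2\sum_{i,j}(D_x\psi)_{i+\frac12,j}\, f^x_{i+\frac12,j}$, which by the definition of $\eipx{\cdot}{\cdot}$ (noting that $a_x 1 \equiv 1$) equals $-\eipx{D_x\psi}{f^x}$. The analogous computation in the $y$-direction yields $-\eipy{D_y\psi}{f^y}$. Summing the two contributions and invoking the definition of $\langle \nabla_h \psi, \vec{f}\rangle$ completes (a).

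For part (b), I would simply set $\vec{f} := \mathcal{D}\nabla_h \nu$, which lies in $\vec{\mathcal{E}}_{\rm per}$ since $\mathcal{D}$ is an edge-centered periodic scalar and $\nabla_h\nu \in \vec{\mathcal{E}}_{\rm per}$ whenever $\nu\in \mathcal{C}_{\rm per}$. Applying (a) then yields $\langle \psi, \nabla_h \cdot (\mathcal{D}\nabla_h \nu)\rangle = -\langle \nabla_h \psi, \mathcal{D}\nabla_h \nu\rangle$ directly.

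The main (and essentially only) obstacle is purely bookkeeping: matching the grid locations correctly so that the shifted sum can be rewritten as a bona fide inner product on $\mathcal{E}^{\rm x}_{\rm per}$, and checking that the periodicity cancellation works on exactly the index range $i,j = 1,\dots,N$. There is no analytic content beyond reindexing, but care must be taken to verify that the edge-centered inner product $\eipx{\cdot}{\cdot}$, which is defined via $\langle a_x(\cdot), 1\rangle$, reduces to $h^2\sum_{i,j}(D_x\psi)_{i+\frac12,j}f^x_{i+\frac12,j}$ after using periodicity of the indices; this is a routine verification but the notational setup is the only place where the argument could go wrong.
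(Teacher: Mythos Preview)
Your proposal is correct; the index-shift argument using periodicity is exactly the standard way to prove discrete summation by parts, and the bookkeeping you describe (including the reduction of $\eipx{\cdot}{\cdot}$ via $a_x$ and periodicity) is accurate. The paper itself does not supply a proof of this proposition --- it is stated as a known result imported from the references \cite{guo16, wise10, wise09} --- so there is no alternative approach to compare against, but your argument is the expected one.
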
 

To facilitate the convergence analysis, we introduce a discrete analogue of the space $H_{per}^{-1}\left(\Omega\right)$, as outlined in~\cite{wang11a}. Suppose that $\mathcal{D}$ is a positive, periodic scalar function defined at all of the edge center points. For any $\phi\in{\mathcal C}_{\rm per}$, there exists a unique $\psi\in\mathring{\mathcal C}_{\rm per}$ that solves
	\begin{eqnarray*}
\mathcal{L}_{\mathcal{D}}(\psi):= - \nabla_h \cdot\left(\mathcal{D}\nabla_h \psi\right) = \phi - \overline{\phi} ,
	\end{eqnarray*}
where we recall that $\overline{\phi} := |\Omega|^{-1}\langle \phi, 1 \rangle$. We equip this space with a bilinear form: for any $\phi_1,\, \phi_2\in \mathring{\mathcal C}_{\rm per}$, define
	\begin{equation*}
\cipgen{ \phi_1 }{ \phi_2 }{\mathcal{L}_{\mathcal{D}}^{-1}} := \langle \mathcal{D}\nabla_h \psi_1 , \nabla_h \psi_2 \rangle ,
	\end{equation*}
where $\psi_i\in\mathring{\mathcal C}_{\rm per}$ is the unique solution to 
	\begin{equation*}
\mathcal{L}_{\mathcal{D}}(\psi_i):= - \nabla_h \cdot\left(\mathcal{D}\nabla_h \psi_i\right)  = \phi_i, \quad i = 1, 2.
	\end{equation*}

Since $\mathcal{L}_{\mathcal{D}}$ is symmetric positive definite, $\cipgen{ \ \cdot \ }{\ \cdot \ }{\mathcal{L}_{\mathcal{D}}^{-1}}$ is an inner product on $\mathring{\mathcal C}_{\rm per}$. (See~\cite{wang11a}.) When $\mathcal{D}\equiv 1$, we drop the subscript and write $\mathcal{L}_{1} = \mathcal{L}$ and, in this case, we write $\cipgen{ \ \cdot \ }{\ \cdot \ }{\mathcal{L}_{\mathcal{D}}^{-1}} =: \cipgen{ \ \cdot \ }{\ \cdot \ }{-1,h}$. In the general setting, the norm associated with this inner product is denoted as $\nrm{\phi}_{\mathcal{L}_{\mathcal{D}}^{-1}} := \sqrt{\cipgen{\phi }{ \phi }{\mathcal{L}_{\mathcal{D}}^{-1}}}$, for all $\phi \in \mathring{\mathcal C}_{\rm per}$. In particular, if $\mathcal{D}\equiv 1$, the notation becomes $\nrm{\, \cdot \, }_{\mathcal{L}_{\mathcal{D}}^{-1}} =: \nrm{\, \cdot \, }_{-1,h}$. 

\begin{prop}\label{prop:grad-minus1-split}
For any $\phi \in \mathring{\mathcal C}_{\rm per}$, we have 
\begin{eqnarray}
\nrm{\phi}_2 \le \nrm{\phi}_{-1,h}^{\frac12} \nrm{\nabla_h \phi}_2^{\frac12} .  \label{lem 3-0}
\end{eqnarray}
\end{prop}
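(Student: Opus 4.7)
The plan is to exploit the duality between $\|\cdot\|_{-1,h}$ and $\|\nabla_h \cdot\|_2$ via the discrete summation-by-parts identity from Proposition~\ref{lemma1}, combined with Cauchy--Schwarz. This is the standard trick used to prove the continuous analogue $\|\phi\|_{L^2}^2 \le \|\phi\|_{H^{-1}}\|\nabla\phi\|_{L^2}$ for mean-zero $\phi$, and it transfers to the discrete setting almost verbatim because all the relevant pieces (summation by parts, positive-definiteness of $\mathcal{L}$) have already been set up.

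First I would introduce the unique $\psi \in \mathring{\mathcal C}_{\rm per}$ solving $\mathcal{L}(\psi) = -\Delta_h \psi = \phi$, which is well-defined since $\phi$ has mean zero. By construction,
\[
\|\phi\|_{-1,h}^2 = \cipgen{\phi}{\phi}{\mathcal{L}^{-1}} = \langle \nabla_h \psi, \nabla_h \psi \rangle = \|\nabla_h \psi\|_2^2.
\]
Next, I would compute $\|\phi\|_2^2 = \langle \phi, \phi \rangle = \langle -\Delta_h \psi, \phi \rangle$ and apply the summation-by-parts formula \eqref{eq:summation-by-parts} to obtain
\[
\|\phi\|_2^2 = \langle \nabla_h \psi, \nabla_h \phi \rangle.
\]

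Then I would apply Cauchy--Schwarz on the right-hand side, yielding
\[
\|\phi\|_2^2 \le \|\nabla_h \psi\|_2 \cdot \|\nabla_h \phi\|_2 = \|\phi\|_{-1,h} \cdot \|\nabla_h \phi\|_2,
\]
and taking square roots gives the claimed inequality.

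I do not anticipate any serious obstacle here: the only nontrivial ingredient is the existence and uniqueness of the solution $\psi$ to $\mathcal{L}(\psi)=\phi$ in $\mathring{\mathcal C}_{\rm per}$, which has already been invoked when defining $\|\cdot\|_{-1,h}$; and the summation-by-parts identity is exactly Proposition~\ref{lemma1}. The one small point to verify is that $\nabla_h \phi$ and $\nabla_h \psi$ are edge-centered functions paired under the appropriate $\vec{\mathcal{E}}_{\rm per}$ inner product, but this matches the discrete divergence/gradient conventions established above, so no extra work is required.
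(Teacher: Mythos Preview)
Your proposal is correct and follows essentially the same approach as the paper: introduce $\psi=\mathcal{L}^{-1}\phi$, use summation by parts (Proposition~\ref{lemma1}) to rewrite $\|\phi\|_2^2=\langle\nabla_h\psi,\nabla_h\phi\rangle$, and then apply Cauchy--Schwarz together with the identity $\|\phi\|_{-1,h}=\|\nabla_h\psi\|_2$. The paper's proof is much terser---it simply cites the summation-by-parts formula and ``the definitions above''---but the underlying computation is identical to yours.
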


\begin{proof}
The identity is based on the summation-by-parts formula,
	\begin{equation}
\cipgen{\phi_1 }{ \phi_2 }{\mathcal{L}_{\mathcal{D}}^{-1}} = \langle \phi_1 , \mathcal{L}_{\mathcal{D}}^{-1} (\phi_2) \rangle = \langle \mathcal{L}_{\mathcal{D}}^{-1} (\phi_1) , \phi_2 \rangle ,
	\end{equation}
and the definitions above.
	\end{proof}

\subsection{A positivity-preserving, energy stable numerical scheme} 

Consider a uniform partition of time, $0 = t_0 < t_1 < \cdots < t_F = T$, such that $t_k = k \Delta t$. The first order convex splitting scheme to the Cahn-Hilliard equation~\eqref{CH equation-0} -- \eqref{CH-mu-0}, with Flory-Huggins energy potential and a constant mobility ${\cal M} (\phi) \equiv 1$, that we consider herein was proposed in~\cite{chen19b} and is stated as follows: given $\phi^k\in {\mathcal C}_{\rm per}$, find  $\phi^{k+1} \in {\mathcal C}_{\rm per}$ such that
\begin{eqnarray}\label{eqn:scheme} 
  \begin{aligned} 
    \frac{\phi^{k+1} - \phi^k}{\dt} =&  \Delta_h  \mu^{k+1} ,     
\\
  \mu^{k+1} = & \ln (1+\phi^{k+1}) - \ln (1-\phi^{k+1}) - \theta_0 \phi^k - \varepsilon^2 \Delta_h \phi^{k+1} .      
\end{aligned} 
\end{eqnarray}

To define the initial conditions for the numerical scheme above, we let $\Phi$ be the exact solution of the Cahn-Hilliard equation given by~\eqref{CH equation-0} -- \eqref{CH-mu-0} and take the initial data to have sufficient regularity so that the exact solution has regularity of class $\mathcal{R}$, where 
        \begin{equation}
\mathcal{R} := H^2 \left(0,T; C_{\rm per}(\Omega)\right) \cap H^1 \left(0,T; C^2_{\rm per}(\Omega)\right) \cap L^\infty \left(0,T; C^6_{\rm per}(\Omega)\right),
	\label{assumption:regularity.1}
	\end{equation}	
i.e.~assume $\Phi \in \mathcal{R}$. Additionally, we suppose that $N = 2K+1$ and let ${\cal P}_N: C_{\rm per}(\Omega) \rightarrow {\cal B}^K(\Omega)$ denote the (spatial) Fourier projection operator, where ${\cal B}^K$ is the space of $\Omega$-periodic (complex) trigonometric polynomials of degree up to and including $K$. Furthermore, we define $\mathcal{P}_h:C_{\rm per}(\Omega) \rightarrow \mathcal{C}_{\rm per}$ as the canonical grid projection operator. Set $\Phi_N (\, \cdot \, ,t) := {\cal P}_N \Phi (\, \cdot \, ,t)$,  the (spatial) Fourier projection of the exact solution into ${\cal B}^K$. Then, using the mass-conservative projection for the initial data, $\phi^0 = {\mathcal P}_h \Phi_N (\, \cdot \, , t=0)$, that is
	\begin{equation}
\phi^0_{i,j} := \Phi_N (p_i, p_j, t=0) ,
	\label{initial data-0}
	\end{equation}	 
the positivity-preserving property, unique solvability, unconditional energy stability, and mass conservation for this scheme have been established in \cite{chen19b} and are summarized in the following theorem. 

	\begin{thm}  \cite{chen19b} 
	\label{CH-positivity} 
Given $\phi^0 = {\mathcal P}_h \Phi_N (\, \cdot \, , t=0)$ and $\phi^k \in\mathcal{C}_{\rm per}$, with $\| \phi^k \|_\infty \le M$, for some finite $M >0$,  and $\left|\overline{\phi^k}\right| < 1$, there exists a unique solution $\phi^{k+1}\in\mathcal{C}_{\rm per}$ to \eqref{eqn:scheme}, with $\phi^{k+1}-\overline{\phi^k}\in\mathring{\mathcal{C}}_{\rm per}$, $\| \phi^{k+1} \|_\infty < 1$, and
        \begin{equation} 
\overline{\phi^m} = \overline{\phi^{m-1}} ,  \quad \forall \ m \in \mathbb{N} .
	\label{mass conserv-2} 
	\end{equation} 
In addition, we have the following energy estimate,  
\begin{align} 
E_h(\phi^{k+1}) + \dt \| \nabla_h \mu^{k+1} \|_2^2 \le E_h(\phi^k) ,  
\end{align}
where
\begin{align}
E_h (\phi) := \langle 1 + \phi , \ln (1 + \phi)  \rangle + \langle 1 - \phi , \ln (1 - \phi)  \rangle
  - \frac{\theta_0}{2} \| \phi \|_2^2 + \frac{\varepsilon^2}{2} \| \nabla_h \phi \|_2^2 , 
    \label{CH-energy-0} 
\end{align} 
and the following uniform-in-time $H_h^1$ estimate,
	\begin{equation}
\| \nabla_h \phi^m \|_2 \le C_1 , \quad  \forall \, m \ge 0 , 
\label{CH-H1-1}    
	\end{equation}
where $C_1$ depends only on the initial data, $\Omega$, and $\varepsilon$. 
	\end{thm}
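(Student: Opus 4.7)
The plan is to recast the scheme~\eqref{eqn:scheme} as the Euler--Lagrange equation of a strictly convex minimization problem on a mean-constrained admissible set with strictly positive logarithmic arguments, and then to deduce the remaining assertions from the convex splitting structure. On cell-centered functions $\phi$ with $\overline{\phi}=\overline{\phi^k}$ and $-1<\phi_{i,j}<1$, introduce the strictly convex functional
\[
J^k(\phi) := \frac{1}{2\dt}\nrm{\phi - \phi^k}_{-1,h}^2 + \langle 1+\phi, \ln(1+\phi)\rangle + \langle 1-\phi, \ln(1-\phi)\rangle + \frac{\varepsilon^2}{2}\nrm{\nabla_h\phi}_2^2 - \theta_0\langle \phi^k, \phi\rangle .
\]
A first-variation calculation along a mean-zero test function, using summation by parts from Proposition~\ref{lemma 1-0} and the definition of $\cipgen{\,\cdot\,}{\,\cdot\,}{-1,h}$, shows that any interior critical point $\phi^{k+1}$ satisfies $\dt^{-1}\mathcal{L}^{-1}(\phi^{k+1}-\phi^k)+\mu^{k+1}\equiv\mathrm{const}$ with $\mu^{k+1}$ as in the second line of~\eqref{eqn:scheme}; applying $-\Delta_h$ then recovers~\eqref{eqn:scheme}. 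Existence of $\phi^{k+1}$ thus reduces to showing that $J^k$ attains its minimum in the interior of the admissible set, while uniqueness follows immediately from the strict convexity of $J^k$.

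The hard part is the positivity-preserving step, since the logarithmic entropy is infinite on the boundary of the admissible set, which is not closed. My approach is to restrict $J^k$ to the compact convex set
\[
A_{h,\delta} := \{\phi \in \mathcal{C}_{\rm per} : \overline{\phi} = \overline{\phi^k},\ -1+\delta \le \phi_{i,j} \le 1-\delta\} ,
\]
nonempty for $\delta<1-|\overline{\phi^k}|$, obtain a unique minimizer $\phi_\delta$, and show that $\phi_\delta$ does not touch either bound for $\delta$ small enough. If $(\phi_\delta)_{i_0,j_0}=-1+\delta$ and the maximum is attained at some $(i_1,j_1)$ with $(\phi_\delta)_{i_1,j_1}\ge\overline{\phi^k}>-1+\delta$, the test direction $\psi:=\mathbf{1}_{(i_0,j_0)}-\mathbf{1}_{(i_1,j_1)}$ is mean-zero and feasible to first order, so $\partial_s|_{s=0}J^k(\phi_\delta+s\psi)\ge 0$ must hold. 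However, the logarithmic contribution contains the term $h^2\ln\delta$, diverging to $-\infty$ as $\delta\to 0$, while the time-derivative term $\dt^{-1}\cipgen{\phi_\delta-\phi^k}{\psi}{-1,h}$, the surface diffusion $\varepsilon^2\langle\nabla_h\phi_\delta,\nabla_h\psi\rangle$, and the expansive piece $-\theta_0\langle\phi^k,\psi\rangle$ remain bounded uniformly in $\delta$ (for fixed $h$, using $\nrm{\phi^k}_\infty\le M$ and $\nrm{\phi_\delta}_\infty<1$). This contradiction, together with the symmetric argument for the upper bound, yields $\nrm{\phi^{k+1}}_\infty<1$.

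Mass conservation~\eqref{mass conserv-2} is immediate from the first equation of~\eqref{eqn:scheme}, since $\phi^{k+1}-\phi^k=\dt\,\Delta_h\mu^{k+1}$ has zero discrete mean. For the energy estimate, I would test the first equation of~\eqref{eqn:scheme} against $\dt\mu^{k+1}$ and use summation by parts to get $\langle \mu^{k+1},\phi^{k+1}-\phi^k\rangle = -\dt\nrm{\nabla_h\mu^{k+1}}_2^2$. Writing $E_h=E_c+E_e$ with $E_c$ collecting the convex logarithmic entropy and surface diffusion and $E_e(\phi)=-\tfrac{\theta_0}{2}\nrm{\phi}_2^2$ concave, convexity of $E_c$ gives $E_c(\phi^{k+1})-E_c(\phi^k)\le\langle\delta_\phi E_c(\phi^{k+1}),\phi^{k+1}-\phi^k\rangle$, concavity of $E_e$ gives $E_e(\phi^{k+1})-E_e(\phi^k)\le\langle\delta_\phi E_e(\phi^k),\phi^{k+1}-\phi^k\rangle$, and the identity $\mu^{k+1}=\delta_\phi E_c(\phi^{k+1})+\delta_\phi E_e(\phi^k)$ produces $E_h(\phi^{k+1})+\dt\nrm{\nabla_h\mu^{k+1}}_2^2\le E_h(\phi^k)$.

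Finally, to extract the uniform $H_h^1$ bound~\eqref{CH-H1-1}, I would iterate the energy decrease to $E_h(\phi^m)\le E_h(\phi^0)$, with $E_h(\phi^0)$ controlled by the regularity assumption~\eqref{assumption:regularity.1}. Since $\nrm{\phi^m}_\infty<1$ gives $\nrm{\phi^m}_2^2<|\Omega|$, and since the elementary inequality $x\ln x\ge -e^{-1}$ yields the lower bound $\langle 1+\phi^m,\ln(1+\phi^m)\rangle+\langle 1-\phi^m,\ln(1-\phi^m)\rangle\ge -2|\Omega|e^{-1}$, rearranging the definition of $E_h$ in~\eqref{CH-energy-0} produces
\[
\frac{\varepsilon^2}{2}\nrm{\nabla_h\phi^m}_2^2 \le E_h(\phi^0) + 2|\Omega|e^{-1} + \frac{\theta_0}{2}|\Omega| ,
\]
which is the desired uniform bound with $C_1$ depending only on the initial data, $\Omega$, $\theta_0$, and $\varepsilon$.
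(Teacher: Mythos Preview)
Your proposal is correct and follows exactly the approach of the cited reference~\cite{chen19b}: the paper does not supply its own proof of this theorem but simply quotes the result, and the argument you outline---recasting the scheme as a strictly convex minimization on a mean-constrained set, using the singular logarithmic barrier together with a two-point mean-zero test direction to rule out boundary contact, and then the standard convex-splitting energy inequality followed by a lower bound on the entropy to extract the $H_h^1$ control---is precisely the technique from~\cite{chen19b} that the paper invokes. One small remark: your constant $C_1$ rightly depends on $\theta_0$ as well, which the theorem statement suppresses since $\theta_0$ is treated as a fixed model parameter.
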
 
	
Moreover, an optimal rate convergence estimate is available in the $\ell^\infty (0,T; H_h^{-1}) \cap \ell^2 (0,T; H_h^1)$ norm and is summarized in the theorem below. 

\begin{thm}  \cite{chen19b} 
	\label{thm:preliminary-convergence}
Suppose that the initial data satisfies $\Phi(\, \cdot \, ,t=0) \in C^6_{\rm per}(\Omega)$ and assume that the exact solution $\Phi$ for the Cahn-Hilliard equation~\eqref{CH equation-0} -- \eqref{CH-mu-0} is of regularity class $\mathcal{R}$. Then, for all positive integers $k$ with $t_k \le T$, there exists a constant $C_2>0$ that is independent of $k$, $\dt$, and $h$ such that
	\begin{equation}
\| \tilde{\phi}^k \|_{-1,h} +  \left( \varepsilon^2 \dt   \sum_{m=1}^{k} \| \nabla_h \tilde{\phi}^m \|_2^2 \right)^{1/2}  \le C_2 ( \dt + h^2 ),   
	\label{CH_LOG-convergence-0}
	\end{equation}
provided that $\dt$ and $h$ are sufficiently small and where the error grid function $\tilde{\phi}^m$ is defined as
        \begin{equation} 
\tilde{\phi}^m := \mathcal{P}_h \Phi_N^m - \phi^m ,  \quad \forall \ m \in \left\{ 0 ,1 , 2, 3, \cdots \right\} .
	\label{CH_LOG-error function-1}
	\end{equation}
	\end{thm}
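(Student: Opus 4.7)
The plan is to carry out a consistency-stability error analysis in the discrete $H_h^{-1}$ norm, exploiting the monotonicity of the logarithmic nonlinearity so that the singular contribution drops at zero cost, with no pointwise control of $(1\pm\phi)^{-1}$ required. First I would establish consistency: inserting $\mathcal{P}_h \Phi_N^{k}$ into the scheme and comparing against the continuous PDE produces
$$\frac{\mathcal{P}_h \Phi_N^{k+1} - \mathcal{P}_h \Phi_N^k}{\dt} = \Delta_h \hat\mu^{k+1} + \tau^{k+1},$$
where $\hat\mu^{k+1}$ has the same convex-splitting form as $\mu^{k+1}$ but with $\phi^{j}$ replaced by $\mathcal{P}_h \Phi_N^{j}$. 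The truncation $\tau^{k+1}$ collects the $O(\dt)$ backward-Euler and explicit-concave errors, the $O(h^2)$ finite-difference error for $\Delta$, and the spectrally small $\mathcal{P}_N$ projection error; the regularity class $\mathcal{R}$, combined with the continuous separation property keeping $\ln(1\pm\Phi)$ as regular as $\Phi$ on $[0,T]$, yields $\|\tau^{k+1}\|_{-1,h} \le C(\dt + h^2)$.

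Subtracting the scheme from this consistency identity gives the error equation
$$\frac{\tilde\phi^{k+1} - \tilde\phi^k}{\dt} = \Delta_h(\hat\mu^{k+1} - \mu^{k+1}) + \tau^{k+1},$$
and the mass-conservation statements of Theorem~\ref{CH-positivity} together with the mass-conservative initial projection~\eqref{initial data-0} ensure $\tilde\phi^m \in \mathring{\mathcal C}_{\rm per}$ for all $m$, with $\tilde\phi^0 = 0$. Testing against $\mathcal{L}^{-1}\tilde\phi^{k+1}$, exploiting $\Delta_h \mathcal{L}^{-1} = -I$ on mean-zero functions, and applying the algebraic identity $2\langle a - b, a\rangle_{-1,h} = \|a\|_{-1,h}^2 - \|b\|_{-1,h}^2 + \|a - b\|_{-1,h}^2$ produces a recursion
$$\frac{\|\tilde\phi^{k+1}\|_{-1,h}^2 - \|\tilde\phi^k\|_{-1,h}^2}{2\dt} + \mathcal{I}_{\log} + \varepsilon^2 \|\nabla_h \tilde\phi^{k+1}\|_2^2 \le \theta_0 \langle \tilde\phi^k, \tilde\phi^{k+1}\rangle + \langle \tau^{k+1}, \tilde\phi^{k+1}\rangle_{-1,h},$$
in which $\mathcal{I}_{\log}$ is the pairing of the log-difference against $\tilde\phi^{k+1}$. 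Because $x \mapsto \ln(1+x) - \ln(1-x)$ is strictly monotone on $(-1,1)$ and both $\mathcal{P}_h\Phi_N^{k+1}$ and $\phi^{k+1}$ lie strictly in $(-1,1)$ (the latter by Theorem~\ref{CH-positivity}), $\mathcal{I}_{\log} \ge 0$ and is simply dropped.

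The rest is routine. The concave cross term is handled via Proposition~\ref{prop:grad-minus1-split}, namely $\theta_0\langle \tilde\phi^k, \tilde\phi^{k+1}\rangle \le \theta_0\|\tilde\phi^k\|_{-1,h}^{1/2}\|\nabla_h\tilde\phi^k\|_2^{1/2}\|\tilde\phi^{k+1}\|_{-1,h}^{1/2}\|\nabla_h\tilde\phi^{k+1}\|_2^{1/2}$, after which Young's inequality absorbs small fractions of the surface-diffusion term at levels $k$ and $k+1$ (the latter compensated by shifting indices upon summation), leaving only a residual proportional to $\|\tilde\phi^k\|_{-1,h}^2 + \|\tilde\phi^{k+1}\|_{-1,h}^2$; the truncation term is controlled by Cauchy--Schwarz and Young in $H_h^{-1}$. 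Summing from $m = 0$ to $k - 1$ and applying the discrete Gronwall inequality with $\tilde\phi^0 = 0$ delivers the stated bound with $C_2 = C_2(T, \varepsilon, \theta_0, \|\Phi\|_{\mathcal R})$. The main obstacle is the consistency step for the logarithmic nonlinearity: one must argue that $\ln(1\pm\Phi)$ inherits enough regularity to push the Fourier-plus-finite-difference consistency to order $O(h^2)$, which in turn requires the continuous separation bound $1 - |\Phi| \ge c > 0$ on $[0,T]$; once this is in hand, every remaining estimate uses only the standard mechanics of discrete elliptic/parabolic error analysis already available in the cited finite-difference literature.
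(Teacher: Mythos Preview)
The paper does not supply its own proof of this theorem: the result is quoted from~\cite{chen19b} and used as a black box, so there is nothing in the present paper to compare against line by line. That said, your sketch is correct and is exactly the $H_h^{-1}$ energy-method argument that underlies the cited result: test the error equation with $(-\Delta_h)^{-1}\tilde\phi^{k+1}$, discard the monotone logarithmic contribution with its sign, control the explicit concave cross term by the interpolation $\|\cdot\|_2 \le \|\cdot\|_{-1,h}^{1/2}\|\nabla_h\cdot\|_2^{1/2}$ plus Young so that only a small multiple of the surface-diffusion dissipation is spent, and close with a discrete Gronwall using $\tilde\phi^0 = 0$. Your identification of the only delicate point---that the truncation bound for $\Delta_h\bigl[\ln(1\pm\mathcal P_h\Phi_N)\bigr]$ requires $\ln(1\pm\Phi)$ to inherit $C^4$-type regularity, hence a uniform separation $1-|\Phi|\ge c>0$---is accurate; note that this separation is automatic once $\Phi\in\mathcal R\subset C([0,T];C_{\rm per}(\overline\Omega))$ with $|\Phi|<1$, by compactness, so no extra hypothesis is needed beyond what the theorem already states.
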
  

We conclude by remarking that the discrete norm $\nrm{ \, \cdot \, }_{-1,h}$ is well defined for the error grid function $\overline{\tilde{\phi}^m}$ since \eqref{initial data-0} implies that $\overline{\tilde{\phi}^m} =0$, for any $m \in \left\{ 0 ,1 , 2, 3, \cdots \right\}$.

\subsection{Strict separation property of the numerical solution and other preliminaries} 

In this subsection, we derive a strict separation property for the numerical solution of \eqref{eqn:scheme} provided that it has been exactly implemented. With this goal in mind, we present several useful properties for the exact solution of the Cahn-Hilliard equation given by~\eqref{CH equation-0} -- \eqref{CH-mu-0}. Specifically, if we suppose that the exact solution $\Phi$ for the Cahn-Hilliard equation~\eqref{CH equation-0} -- \eqref{CH-mu-0} is of regularity class $\mathcal{R}$, then we expect to have (and assume to have) the following separation property:
\begin{equation} 
 1+ \Phi, \  1 - \Phi  \ge \epsilon_0 , 
    \label{assumption:separation}
\end{equation}  
at a point-wise level, for some $\epsilon_0 > 0$. Note that $\epsilon_0$ is the uniform distance between the phase variable and the singular value limits that is dependent on $\varepsilon,  \theta_0$, and the initial data discussed in the Introduction above. Therefore, such a separation parameter $\epsilon_0$ is solely related to the PDE problem. Additionally, if $\Phi\in L^\infty(0,T;H^\ell_{\rm per}(\Omega))$, then the following projection approximation is standard:
	\begin{equation} 
\nrm{\Phi_N - \Phi}_{L^\infty(0,T;H^n)}  
   \le C_{\rm P} h^{\ell-n} \nrm{\Phi }_{L^\infty(0,T;H^\ell)},  \quad \forall \ 0 \le n \le \ell ,
	\label{projection-est-0} 
	\end{equation} 
where $C_{\rm P} > 0$ only depends on $\Omega$. Furthermore, $h$ can be chosen sufficiently small so that 
\begin{align*}
    1 + \Phi_N , \, 1 - \Phi_N \ge \left(\nicefrac{3}{4}\right) \epsilon_0.
\end{align*}
Finally, the following mass conservative property is available at the discrete level since $\Phi_N \in {\cal B}^K$: 
	\begin{equation} 
\overline{\Phi_N^m} = \frac{1}{|\Omega|}\int_\Omega \, \Phi_N ( \cdot, t_m) \, d {\bf x} = \frac{1}{|\Omega|}\int_\Omega \, \Phi_N ( \cdot, t_{m-1}) \, d {\bf x} = \overline{\Phi_N^{m-1}} ,  \quad \forall \ m \in\mathbb{N}, 
	\label{mass conserv-1} 
	\end{equation} 
where the notations $\Phi_N^m$, $\Phi^m$ denote $\Phi_N(\, \cdot \, , t_m)$ and $\Phi(\, \cdot \, , t_m)$, respectively.

\begin{lem}\label{lem:sep-num-sol}
    Let the initial data $\Phi(\, \cdot \, ,t=0) \in C^6_{\rm per}(\Omega)$ and suppose the exact solution for the Cahn-Hilliard equation~\eqref{CH equation-0} -- \eqref{CH-mu-0} is of regularity class $\mathcal{R}$. Additionally, suppose that the exact solution for the Cahn-Hilliard equation~\eqref{CH equation-0} -- \eqref{CH-mu-0} satisfies the separation property \eqref{assumption:separation}. Then, provided $\dt$ and $h$ are sufficiently small and we take a linear refinement of $\dt$ such that $C_L h \le \dt \le C_U h$, we have
\begin{equation} 
 1+ \phi^m, \, \, 1 - \phi^m  \ge \frac{\epsilon_0}{2} ,  
    \label{assumption:separation-1}
\end{equation}  	
at a point-wise level, for all positive integers $m$ such that $0 \le m \le k+1$.
\end{lem}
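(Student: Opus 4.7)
The plan is to derive the discrete strict separation property~\eqref{assumption:separation-1} by controlling the error $\tilde{\phi}^m := \mathcal{P}_h \Phi_N^m - \phi^m$ in the discrete maximum norm and then transferring the separation bound of the Fourier projection $\Phi_N$ to the numerical solution $\phi^m$. The case $m=0$ is immediate since $\phi^0 = \mathcal{P}_h \Phi_N^0$ gives $\tilde{\phi}^0 \equiv 0$, so the substantive work is for $1 \le m \le k+1$.

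First I would collect the available norm estimates on $\tilde{\phi}^m$. Theorem~\ref{thm:preliminary-convergence} directly yields $\nrm{\tilde{\phi}^m}_{-1,h} \le C_2 (\dt + h^2)$, and the uniform discrete $H_h^1$ bound in Theorem~\ref{CH-positivity} together with the regularity class $\mathcal{R}$ of $\Phi$ (which provides a uniform bound on $\nrm{\nabla_h \mathcal{P}_h \Phi_N^m}_2$) gives $\nrm{\nabla_h \tilde{\phi}^m}_2 \le C_3$ by the triangle inequality. Proposition~\ref{prop:grad-minus1-split} then interpolates to produce
\[
  \nrm{\tilde{\phi}^m}_2 \le \nrm{\tilde{\phi}^m}_{-1,h}^{1/2} \nrm{\nabla_h \tilde{\phi}^m}_2^{1/2} \le C_4 (\dt + h^2)^{1/2}.
\]

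The crux of the proof is to pass from this $\ell^2$ estimate to a discrete maximum-norm estimate $\nrm{\tilde{\phi}^m}_\infty \le \epsilon_0/4$. Under the linear refinement $C_L h \le \dt \le C_U h$ the previous bound is only of order $h^{1/2}$, and a crude 2-D inverse inequality $\nrm{u}_\infty \le C h^{-1}\nrm{u}_2$ is insufficient to absorb the $h^{-1}$ loss. The approach I would take is a sharper discrete interpolation of Gagliardo--Nirenberg type, for example $\nrm{u}_\infty \le C \nrm{u}_2^{1/2} \nrm{\Delta_h u}_2^{1/2}$ in two dimensions, which requires an auxiliary uniform bound on $\nrm{\Delta_h \tilde{\phi}^m}_2$. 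This auxiliary bound should be obtained by an $H_h^2$-type a~priori estimate on $\phi^m$ derived from the scheme~\eqref{eqn:scheme} (testing, e.g., against $\Delta_h \mu^{k+1}$), combined with the uniform $H_h^1$ estimate of Theorem~\ref{CH-positivity} and the smoothness of $\Phi_N$. The main technical obstacle is that such an a~priori bound on $\nrm{\Delta_h \phi^m}_2$ requires Lipschitz control of the logarithmic nonlinearity, which in turn depends on a separation bound at the previous time step; this is precisely what forces the argument to be organized as a finite induction on $m$, with the inductive hypothesis that~\eqref{assumption:separation-1} holds at level $m-1$ feeding into the $H_h^2$ estimate at level $m$.

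Finally, the projection estimate~\eqref{projection-est-0} combined with the assumed separation~\eqref{assumption:separation} of the exact solution $\Phi$ guarantees $1 \pm \mathcal{P}_h \Phi_N^m \ge \tfrac{3}{4}\epsilon_0$ pointwise once $h$ is sufficiently small. Hence, once $h$ and $\dt$ are small enough that the previous paragraph delivers $\nrm{\tilde{\phi}^m}_\infty \le \epsilon_0/4$, the identity $\phi^m = \mathcal{P}_h \Phi_N^m - \tilde{\phi}^m$ gives
\[
  1 \pm \phi^m \ge \tfrac{3}{4}\epsilon_0 - \nrm{\tilde{\phi}^m}_\infty \ge \tfrac{1}{2}\epsilon_0,
\]
which is the desired separation~\eqref{assumption:separation-1} and closes the induction.
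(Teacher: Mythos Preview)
Your proposal misses the decisive ingredient and, as a result, takes a detour that is both unnecessary and incomplete. You extract only the $H_h^{-1}$ piece of Theorem~\ref{thm:preliminary-convergence} and pair it with the \emph{uniform} bound $\nrm{\nabla_h \tilde{\phi}^m}_2 \le C_3$ from Theorem~\ref{CH-positivity}. But the second half of~\eqref{CH_LOG-convergence-0} already gives a pointwise-in-$m$ convergence rate for the gradient: since each term in the sum $\varepsilon^2 \dt \sum_m \nrm{\nabla_h \tilde{\phi}^m}_2^2$ is nonnegative, one has
\[
  \nrm{\nabla_h \tilde{\phi}^m}_2 \le \frac{C_2(\dt+h^2)}{\varepsilon\,\dt^{1/2}} \le C_3\bigl(\dt^{1/2}+h^{3/2}\bigr)
\]
under the linear refinement $C_L h \le \dt \le C_U h$. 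The paper then invokes the 2-D inverse-type inequality $\nrm{f}_\infty \le C_{\rm Inv} h^{-\delta_0}\nrm{\nabla_h f}_2$ for mean-zero $f$ (any $\delta_0>0$), which immediately yields $\nrm{\tilde{\phi}^m}_\infty = O(h^{1/2-\delta_0}) \le \epsilon_0/4$ for $\delta_0<\tfrac14$. Combined with the already-stated bound $1\pm \mathcal{P}_h\Phi_N^m \ge \tfrac34\epsilon_0$, this gives~\eqref{assumption:separation-1} directly, with no induction and no $H_h^2$ machinery.

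Your alternative route via a discrete Gagliardo--Nirenberg inequality $\nrm{u}_\infty \le C\nrm{u}_2^{1/2}\nrm{\Delta_h u}_2^{1/2}$ is not wrong in spirit, but the required uniform bound on $\nrm{\Delta_h \phi^m}_2$ is not established anywhere in the paper, and the sketch you give (``testing against $\Delta_h\mu^{k+1}$'') does not obviously produce it: such a test controls $\nrm{\nabla_h\mu}_2$ or $\nrm{\Delta_h\mu}_2$, and extracting $\nrm{\Delta_h\phi}_2$ from $\mu$ requires precisely the Lipschitz control of the singular nonlinearity that you are trying to bootstrap. So the inductive loop you propose is plausible but has a genuine gap, and in any case it is superfluous once you use the full strength of~\eqref{CH_LOG-convergence-0}.
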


\begin{proof}
As a result of the leading order convergence estimate~\eqref{CH_LOG-convergence-0} and the linear refinement requirement, we obtain
\begin{equation} 
    \| \nabla_h \tilde{\phi}^m \|_2  \le \frac{C_2 (\dt + h^2 )}{\varepsilon \dt^\frac12}  
    \le C_3 (\dt^\frac12 + h^\frac32) ,   \quad 0 \le m \le k+1 .
    \label{CH_LOG-convergence-1}
	\end{equation}	
Meanwhile, the following inverse inequality is available for any function $f$ such that $\overline{f} =0$: 
\begin{equation} 
  \| f \|_\infty \le \frac{C_{\rm Inv} \| \nabla_h f \|_2}{h^{\delta_0}} ,  \quad \mbox{for } \delta_0 >0 .  
  \label{inverse ineq-1} 
\end{equation} 
Hence, we arrive at the following $\| \cdot \|_\infty$ estimate of the numerical error function, at each time step $t_m$: 
\begin{equation} 
    \| \tilde{\phi}^m \|_\infty \le \frac{C_{\rm Inv} \| \nabla_h \tilde{\phi}^m \|_2}{h^{\delta_0}} 
    \le \frac{C_{\rm Inv} C_3 (\dt^\frac12 + h^\frac32)}{h^{\delta_0}} 
    \le C_{\rm Inv} C_3 (\dt^\frac14 + h^\frac54) \le \frac{\epsilon_0}{4} ,   \quad 0 \le m \le k+1 , 
    \label{CH_LOG-convergence-2}
\end{equation}
for $0 < \delta_0 < \frac14$, provided that $\dt$ and $h$ are sufficiently small. The  combination of this inequality with the fact that $1 + \Phi_N , \, 1 - \Phi_N \ge \left(\nicefrac{3}{4} \right) \epsilon_0$, concludes the proof.
\end{proof}

Finally, the strict separation property of the numerical scheme allows us to obtain the following lemma which will be critical to the proof of the geometric convergence of the PSD solver presented in Section \ref{sec:geometric-convergence}.

\begin{lem}\label{lem:Rk-bound}
Let the initial data $\Phi(\, \cdot \, ,t=0) \in C^6_{\rm per}(\Omega)$ and suppose the exact solution $\Phi$ for the Cahn-Hilliard equation~\eqref{CH equation-0} -- \eqref{CH-mu-0} is of regularity class $\mathcal{R}$. Additionally, suppose that the exact solution for the Cahn-Hilliard equation~\eqref{CH equation-0} -- \eqref{CH-mu-0} satisfies the separation property \eqref{assumption:separation}. Let $\phi^k\in {\mathcal C}_{\rm per}$ and $\phi^{k+1}\in {\mathcal C}_{\rm per}$ be solutions to \eqref{eqn:scheme} at consecutive time steps. Define
\begin{align} 
  R_k :=& \, \frac{\varepsilon^2}{2} ( \| \nabla_h \phi^k \|_2^2  - \| \nabla_h \phi^{k+1} \|_2^2 )  
  + \langle 1 + \phi^k , \ln (1 + \phi^k)  \rangle 
   -  \langle 1 + \phi^{k+1} , \ln (1 + \phi^{k+1} )  \rangle   \nonumber 
\\
     &+ \langle 1 - \phi^k , \ln (1 - \phi^k)  \rangle 
   -  \langle 1 - \phi^{k+1} , \ln (1 - \phi^{k+1} )  \rangle     
  - \theta_0 \langle \phi^k , \phi^k - \phi^{k+1} \rangle  . \label{R_k-est-0} 
\end{align}   
Then, provided $\dt$ and $h$ are sufficiently small and a linear refinement of $\dt$ is taken such that $C_L h \le \dt \le C_U h$, we have 
    \begin{align}\label{R_k-est-5}
        R_k \le C_{\rm R}  (\dt^\frac12 + h^\frac32),
    \end{align}
where $C_{\rm R}$ depends on $\Omega, \varepsilon, \epsilon_0,$ and $\theta_0$ but does not depend on $k, h$ or $\dt$.
\end{lem}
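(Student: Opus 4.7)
The plan is to derive an exact three-term representation of $R_k$ that exhibits it as a sum of non-negative dissipative contributions, and then bound each contribution using the convergence estimate of Theorem~\ref{thm:preliminary-convergence} together with the strict separation property of Lemma~\ref{lem:sep-num-sol}. A direct computation matching terms against the energy \eqref{CH-energy-0} gives
\begin{align*}
R_k = E_h(\phi^k) - E_h(\phi^{k+1}) - \frac{\theta_0}{2}\|\phi^k - \phi^{k+1}\|_2^2.
\end{align*}
Writing $F(\phi):=(1+\phi)\ln(1+\phi)+(1-\phi)\ln(1-\phi)$ and applying a pointwise second-order Taylor remainder to $F(\phi^k)-F(\phi^{k+1})$ about $\phi^{k+1}$ (with intermediate value $\eta$), expanding the gradient square-difference, and using $\mu^{k+1} = F'(\phi^{k+1}) - \theta_0\phi^k - \varepsilon^2\Delta_h\phi^{k+1}$ together with the update $\phi^k-\phi^{k+1} = -\dt\,\Delta_h\mu^{k+1}$, the cross terms combine to give the clean identity
\begin{align*}
R_k = \dt\,\|\nabla_h\mu^{k+1}\|_2^2 + \tfrac{1}{2}\langle F''(\eta)(\phi^k-\phi^{k+1})^2, 1\rangle + \tfrac{\varepsilon^2}{2}\|\nabla_h(\phi^k-\phi^{k+1})\|_2^2.
\end{align*}
Each piece is non-negative; it remains to bound each by $C(\dt^{1/2}+h^{3/2})$.

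The unifying tool is to split the temporal increment using the error grid function \eqref{CH_LOG-error function-1} as
\begin{align*}
\phi^{k+1}-\phi^k = \mathcal{P}_h\bigl(\Phi_N^{k+1}-\Phi_N^k\bigr) + \tilde\phi^k - \tilde\phi^{k+1}.
\end{align*}
The regularity $\Phi\in\mathcal{R}$ gives $O(\dt)$ bounds on the exact-solution increment in the $\|\cdot\|_{-1,h}$, $\|\cdot\|_2$, and $\|\nabla_h\cdot\|_2$ norms, while Theorem~\ref{thm:preliminary-convergence} supplies $\|\tilde\phi^m\|_{-1,h}\le C_2(\dt+h^2)$ and \eqref{CH_LOG-convergence-1} supplies $\|\nabla_h\tilde\phi^m\|_2 \le C_3(\dt^{1/2}+h^{3/2})$. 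Combining these yields the two key bounds
\begin{align*}
\|\phi^{k+1}-\phi^k\|_{-1,h} \le C(\dt+h^2), \qquad \|\nabla_h(\phi^{k+1}-\phi^k)\|_2 \le C(\dt^{1/2}+h^{3/2}).
\end{align*}

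With these ingredients in hand, the three terms are estimated as follows. For the first, the scheme equation yields the identity $\|\phi^{k+1}-\phi^k\|_{-1,h}^2 = \dt^2\|\nabla_h\mu^{k+1}\|_2^2$, so dividing by $\dt$ produces $\dt\|\nabla_h\mu^{k+1}\|_2^2 \le C(\dt+h^2+h^4/\dt)\le C(\dt+h^2)$, where the linear refinement $C_L h\le \dt$ absorbs $h^4/\dt$. The third term is immediately $\le C(\dt+h^3)$ by squaring the $H^1$ bound. For the middle term, the strict separation from Lemma~\ref{lem:sep-num-sol} gives $1\pm\eta\ge \epsilon_0/2$, hence $F''(\eta)\le 4/\epsilon_0$ uniformly; together with Proposition~\ref{prop:grad-minus1-split} (applicable since $\phi^{k+1}-\phi^k\in\mathring{\mathcal{C}}_{\rm per}$ by \eqref{mass conserv-2}) this yields
\begin{align*}
\|\phi^{k+1}-\phi^k\|_2^2 \le \|\phi^{k+1}-\phi^k\|_{-1,h}\,\|\nabla_h(\phi^{k+1}-\phi^k)\|_2 \le C(\dt+h^2)(\dt^{1/2}+h^{3/2}).
\end{align*}
Summing the three bounds and using $C_L h\le \dt\le C_U h$ to express the mixed $h,\dt$ powers in the stated form gives $R_k\le C_{\rm R}(\dt^{1/2}+h^{3/2})$.

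The main obstacle will be the middle term, because no individual dissipation argument makes it small a priori: only the cancellation coming from the sharp $H^1$ error estimate \eqref{CH_LOG-convergence-1}, combined with the $H^{-1}$ estimate via Proposition~\ref{prop:grad-minus1-split}, produces the required smallness. The strict separation from Lemma~\ref{lem:sep-num-sol} is indispensable here, since without a uniform positive lower bound on $1\pm\phi^m$ the coefficient $F''(\eta)$ could become arbitrarily large and destroy the estimate. Likewise, the linear refinement $C_L h\le \dt\le C_U h$ is essential both for absorbing the $h^4/\dt$ term in the first estimate and for converting the leftover mixed powers into the clean $\dt^{1/2}+h^{3/2}$ form stated in \eqref{R_k-est-5}.
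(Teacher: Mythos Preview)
Your proof is correct but follows a genuinely different route from the paper. The paper estimates each of the four groups of terms in the definition~\eqref{R_k-est-0} of $R_k$ directly: it handles the gradient difference $\|\nabla_h\phi^k\|_2^2-\|\nabla_h\phi^{k+1}\|_2^2$ via Cauchy--Schwarz and the uniform $H_h^1$ bound~\eqref{CH-H1-1}, and it treats the logarithmic differences by a \emph{first}-order mean value expansion $\langle \ln(1+\eta_1)+1,\phi^k-\phi^{k+1}\rangle$, bounding the prefactor using the separation property and then invoking the $\ell^2$ increment estimate $\|\phi^{k+1}-\phi^k\|_2\le C(\dt^{3/4}+h^{7/4})$. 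You instead derive the exact dissipation identity $R_k=\dt\|\nabla_h\mu^{k+1}\|_2^2+\tfrac12\langle F''(\eta)(\phi^k-\phi^{k+1})^2,1\rangle+\tfrac{\varepsilon^2}{2}\|\nabla_h(\phi^k-\phi^{k+1})\|_2^2$ (a second-order Taylor step) and bound the three non-negative pieces separately, using in particular the identity $\dt^2\|\nabla_h\mu^{k+1}\|_2^2=\|\phi^{k+1}-\phi^k\|_{-1,h}^2$ together with the $H^{-1}$ error estimate. What your approach buys is structural clarity (it exhibits $R_k$ as a sum of numerical dissipation terms) and a somewhat sharper intermediate bound, essentially $O(\dt+h^2)$ under the linear refinement rather than the stated $O(\dt^{1/2}+h^{3/2})$, because you never pair $\|\nabla_h(\phi^{k+1}-\phi^k)\|_2$ linearly against the $O(1)$ uniform gradient bound as the paper does. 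The paper's approach, on the other hand, is more elementary in that it avoids deriving the full energy identity and does not rely on the scheme equation to convert the $\|\cdot\|_{-1,h}$ norm into $\|\nabla_h\mu^{k+1}\|_2$.
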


\begin{proof}
To begin, we note that the regularity assumption for the exact solution $\Phi$ implies the following estimates for the projection solution $\Phi_N$: 
\begin{eqnarray} 
  \| \Phi_N^{k+1} - \Phi_N^k \|_{-1,h} \le C_4 \dt ,  \quad 
  \| \Phi_N^{k+1} - \Phi_N^k \|_2 \le C_5 \dt , \quad   
  \| \nabla_h ( \Phi_N^{k+1} - \Phi_N^k ) \|_2 \le C_6 \dt .  
  \label{CH_LOG-convergence-3}
\end{eqnarray}
Meanwhile, inequalities \eqref{CH_LOG-convergence-0} and \eqref{CH_LOG-convergence-1} yield
\begin{equation} 
    \| \tilde{\phi}^m \|_2 \le \| \tilde{\phi}^m \|_{-1, h}^\frac12 \cdot 
    \| \nabla_h \tilde{\phi}^m \|_2^\frac12  \le C_2^\frac12 C_3^\frac12 (\dt^\frac34 + h^\frac74) ,      
    \quad m = k, k+1 ,  
    \label{CH_LOG-convergence-4-2}
	\end{equation}	
where we have invoked Proposition \ref{prop:grad-minus1-split}. A combination of~\eqref{CH_LOG-convergence-3} with \eqref{CH_LOG-convergence-4-2} indicates that 
\begin{align}
    \| \phi^{k+1} - \phi^k \|_2 &\le  \left(C_5 + C_2^\frac12 C_3^\frac12\right) (\dt^\frac34 + h^\frac74)  , 
    \label{CH_LOG-convergence-5-1}
\end{align}
and
\begin{align}
  \| \nabla_h ( \phi^{k+1} - \phi^k ) \|_2 &\le \left(C_6 + C_3  \right) (\dt^\frac12 + h^\frac32) ,  
  \label{CH_LOG-convergence-5-2}
\end{align} 
provided $\dt < 1$ and where we have added and subtracted appropriate terms and invoked the triangle inequality.

Therefore, based on the Cauchy-Schwarz and triangle inequalities and the uniform-in-time $H_h^1$ estimate~\eqref{CH-H1-1}, the following estimate is available for the first term in the definition of $R_k$: 
\begin{align} 
    \| \nabla_h \phi^k \|_2^2  - \| \nabla_h \phi^{k+1} \|_2^2  
    &= \langle \nabla_h ( \phi^k + \phi^{k+1} )  , \nabla_h ( \phi^k - \phi^{k+1} )  \rangle  
    \nonumber 
\\
  &\le
      \| \nabla_h ( \phi^k + \phi^{k+1} ) \|_2 \cdot \| \nabla_h ( \phi^k - \phi^{k+1} )  \|_2     
      \nonumber 
\\
  &\le 2 C_1   \left(C_6 + C_3  \right) (\dt^\frac12 + h^\frac32) .
     \label{R_k-est-1}
	\end{align}	
For the nonlinear inner product difference, an application of the intermediate value theorem gives  
\begin{eqnarray} 
   \langle 1 + \phi^k , \ln (1 + \phi^k)  \rangle 
   -  \langle 1 + \phi^{k+1} , \ln (1 + \phi^{k+1} )  \rangle   
   = \langle \ln ( 1 + \eta_1 ) +1 , \phi^k - \phi^{k+1} \rangle  , \label{R_k-est-2-1}   
\end{eqnarray}   
where $\eta_1$ is between $\phi^k$ and $\phi^{k+1}$. Meanwhile, by the strict separation property~\eqref{assumption:separation-1}, we see that 
\begin{eqnarray} 
   |  \ln ( 1 + \eta_1 ) +1 | \le \ln (2 \epsilon_0^{-1}) + 1 . \label{R_k-est-2-2}   
\end{eqnarray}   
In turn, a combination of~\eqref{R_k-est-2-1} and \eqref{R_k-est-2-2} along with the fact that $\| f \|_2 \le |\Omega|^{\frac12} \|f\|_\infty$, for any $f \in C_{\rm per}(\Omega)$, leads to
\begin{align} 
   \langle 1 + \phi^k , \ln (1 + \phi^k)  \rangle 
   -  \langle 1 + \phi^{k+1} , \ln (1 + \phi^{k+1} )  \rangle &\le | \Omega |^\frac12  \| \ln ( 1 + \eta_1 ) +1 \|_\infty \cdot  \| \phi^k - \phi^{k+1} \|_2 
   \nonumber
   \\
   &\le \left(C_5 + C_2^\frac12 C_3^\frac12\right) | \Omega |^\frac12 ( \ln (2 \epsilon_0^{-1}) + 1 )  
    (\dt^\frac34 + h^\frac74)  , \label{R_k-est-2-3}   
\end{align}  
with the convergence estimate~\eqref{CH_LOG-convergence-5-1} applied in the last step. Similarly, we are able to obtain 
\begin{align} 
   \langle 1 - \phi^k , \ln (1 - \phi^k)  \rangle 
   -  \langle 1 - \phi^{k+1} , \ln (1 - \phi^{k+1} )  \rangle    
   \le \left(C_5 + C_2^\frac12 C_3^\frac12\right) | \Omega |^\frac12 ( \ln (2 \epsilon_0^{-1}) + 1 )  
    (\dt^\frac34 + h^\frac74) . \label{R_k-est-3}   
\end{align}  
Finally, the last term on the right hand side of~\eqref{R_k-est-0} is bounded as follows: 
\begin{align} 
    -\theta_0 ( \phi^k , \phi^k - \phi^{k+1} )  
   &\le  \theta_0 \| \phi^k \|_2 \cdot \| \phi^k - \phi^{k+1} \|_2  \nonumber 
\\
  &\le \theta_0 | \Omega |^\frac12 \left(C_5 + C_2^\frac12 C_3^\frac12\right) (\dt^\frac34 + h^\frac74) , 
    \label{R_k-est-4}   
\end{align}   
in which the fact that $\| \phi^k \|_\infty < 1$ has been applied. Therefore, a substitution of~\eqref{R_k-est-1}, \eqref{R_k-est-2-3}, \eqref{R_k-est-3}, and \eqref{R_k-est-4} into~\eqref{R_k-est-0} results in the following bound, provided that $\dt$ and $h$ are sufficiently small:  
\begin{eqnarray} 
   R_k \le C_{\rm R}  (\dt^\frac12 + h^\frac32)  ,   
\end{eqnarray}
where $C_{\rm R}$ depends on $\Omega, \varepsilon, \epsilon_0,$ and $\theta_0$ but does not depend on $k, h$ or $\dt$.
\end{proof}

 \section{The preconditioned steepest descent iteration solver}
	\label{sec: PSD}

In this section, we present the preconditioned steepest descent iteration solver. For the numerical solution of (\ref{eqn:scheme}), we consider the discrete operator
\begin{eqnarray}
\mathcal{N}_h (\phi) :=  (-\Delta_h)^{-1} 
( \phi-\phi^k ) + \dt ( \ln (1+\phi) - \ln (1-\phi) ) - \varepsilon^2 \dt \Delta_h \phi, 
  \label{nonlinear equation-1} 
\end{eqnarray}
and we set
\begin{eqnarray}
    f= \theta_0 \dt \phi^k.
\end{eqnarray}
Hence, given $\phi^k\in {\mathcal C}_{\rm per}$, solving the numerical scheme \eqref{eqn:scheme} for $\phi^{k+1}\in {\mathcal C}_{\rm per}$ is equivalent to solving the following nonlinear system 
\begin{eqnarray}\label{eqn:nonlinearsys}
\mathcal{N}_h (\phi)=f,  \quad \mbox{for} \, \, \,  \phi \in {\mathcal C}_{\rm per} . 
\end{eqnarray} 

It should be understood that the analysis focuses on a single iteration of the numerical method \eqref{eqn:scheme} and we thus utilize the notation $\phi:= \phi^{k+1}$ throughout the remainder of the paper. 

\begin{lem}
    Let $\phi^k\in {\mathcal C}_{\rm per}$ be given. Define the discrete energy
\begin{eqnarray}\label{eqn:disconvexenergy}
J_h(\phi):= \frac12 \| \phi-\phi^k \|_{-1, h}^2 + \dt ( \langle 1 + \phi , \ln (1 + \phi ) \rangle 
 + \langle 1 - \phi , \ln (1 - \phi ) \rangle ) 
 +\frac{\varepsilon^2 \dt}{2} \| \nabla_h \phi \|_2^2 - \langle f , \phi \rangle,
\end{eqnarray} 
over the admissible set
\begin{align}\label{eqn:admissible-set}
    W_h:= \left\{\phi \in {\mathcal C}_{\rm per} \Big| \phi - \phi^k \in \mathring{\mathcal C}_{\rm per} \right\}.
\end{align} 
Then, solving (\ref{eqn:nonlinearsys}) is equivalent to minimizing $J_h(\phi)$.
\end{lem}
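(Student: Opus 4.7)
The plan is a standard strict-convexity argument: verify that $J_h$ is strictly convex on (the finite part of) $W_h$, compute its first variation, and identify the Euler--Lagrange condition with the system \eqref{eqn:nonlinearsys}.

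\textbf{Strict convexity.} Because $W_h = \phi^k + \mathring{\mathcal{C}}_{\rm per}$ is an affine subspace, convexity of $J_h$ on $W_h$ reduces to convexity along arbitrary directions $\psi \in \mathring{\mathcal{C}}_{\rm per}$. The quadratic term $\tfrac{1}{2}\|\phi-\phi^k\|_{-1,h}^2 = \tfrac{1}{2}\langle \phi-\phi^k,(-\Dh)^{-1}(\phi-\phi^k)\rangle$ is convex because $(-\Dh)^{-1}$ is symmetric positive definite on $\mathring{\mathcal{C}}_{\rm per}$. The scalar function $s\mapsto (1+s)\ln(1+s) + (1-s)\ln(1-s)$ has second derivative $2/(1-s^2)>0$ on $(-1,1)$, which yields strict pointwise convexity of the logarithmic summands. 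The surface-diffusion term $\tfrac{\varepsilon^2\dt}{2}\|\nabh\phi\|_2^2$ is convex and $-\langle f,\phi\rangle$ is affine. Since $J_h=+\infty$ outside $\{\|\phi\|_\infty<1\}$, the effective domain is the open, nonempty (by Theorem~\ref{CH-positivity}) convex subset of $W_h$ on which $-1<\phi<1$ pointwise; on this subset $J_h$ is strictly convex, so at most one minimizer can exist.

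\textbf{First variation.} For $\phi\in W_h$ with $\|\phi\|_\infty<1$ and $\psi\in \mathring{\mathcal{C}}_{\rm per}$, I differentiate $\tau\mapsto J_h(\phi+\tau\psi)$ at $\tau=0$, working on a $\tau$-interval small enough to preserve $|\phi+\tau\psi|<1$ pointwise. The quadratic term contributes $\langle (-\Dh)^{-1}(\phi-\phi^k),\psi\rangle$ by symmetry of $(-\Dh)^{-1}$; the logarithmic terms differentiate to $\ln(1+\phi)+1$ and $-\ln(1-\phi)-1$, whose $\pm 1$ constants pair against the mean-zero $\psi$ and drop out; the gradient term yields $-\varepsilon^2\dt\langle \Dh\phi,\psi\rangle$ by summation by parts (Proposition~\ref{lemma1}); and the source contributes $-\langle f,\psi\rangle$. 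Collecting the pieces,
\[
\left.\frac{d}{d\tau}J_h(\phi+\tau\psi)\right|_{\tau=0}
= \langle \mathcal{N}_h(\phi)-f,\ \psi\rangle \qquad \forall\,\psi\in\mathring{\mathcal{C}}_{\rm per}.
\]

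\textbf{Equivalence and expected obstacle.} By strict convexity, $\phi\in W_h$ minimizes $J_h$ on $W_h$ if and only if the above first variation vanishes for every $\psi \in \mathring{\mathcal{C}}_{\rm per}$, i.e.~$\mathcal{N}_h(\phi)-f$ is $\langle\cdot,\cdot\rangle$-orthogonal to $\mathring{\mathcal{C}}_{\rm per}$, and therefore a spatial constant. This is the correct reading of \eqref{eqn:nonlinearsys}: the original scheme \eqref{eqn:scheme} couples $\phi$ to the chemical potential only through $\Dh\mu^{k+1}$, so an additive constant in $\mathcal{N}_h(\phi)-f$ is annihilated and the two formulations coincide. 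The main (indeed only) delicacy is the logarithmic singularity at $\phi=\pm 1$, which forces the directional-derivative calculation to be carried out on the open feasibility region $\{\|\phi\|_\infty<1\}$; that this region contains a candidate at all, and moreover the unique scheme solution, is exactly what Theorem~\ref{CH-positivity} supplies, so the equivalence established here is substantive rather than vacuous.
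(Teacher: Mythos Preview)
Your proposal is correct and follows essentially the same approach as the paper: establish (strict) convexity of $J_h$, compute the first variation, and identify the stationarity condition with \eqref{eqn:nonlinearsys}. The paper's version computes the second G\^ateaux derivative explicitly to verify convexity, whereas you argue term by term, and you are slightly more careful than the paper in noting that the Euler--Lagrange condition determines $\mathcal{N}_h(\phi)-f$ only up to a constant and in handling the domain restriction $\|\phi\|_\infty<1$; these refinements are welcome but do not constitute a different route.
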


\begin{proof}
A direct calculation reveals that $J_h$ is twice Gataeux differentiable and convex. Specifically, we have
\begin{eqnarray}\label{eqn:1st}
\delta_\phi J_h (\phi)(v)= \langle ( - \Delta_h )^{-1} ( \phi-\phi^k ) 
 + \dt ( \ln (1+\phi) - \ln (1-\phi) ) , v \rangle  
 + \varepsilon^2 \dt \langle \nabla_h \phi , \nabla_h v \rangle 
 - \langle f , v \rangle ,
\end{eqnarray} 
for any $v \in \mathring{\mathcal C}_{\rm per}$, and
\begin{eqnarray}\label{eqn:2nd}
\delta_{\phi\phi} J_h (\phi)(v,w)= \langle  ( - \Delta_h )^{-1} 
v , w \rangle + \dt  \left\langle  \frac{1}{1+\phi} + \frac{1}{1-\phi} ,  v w  \right\rangle 
 + \varepsilon^2 \dt \langle \nabla_h v , \nabla_h w \rangle ,
\end{eqnarray} 
for any $v,w \in \mathring{\mathcal C}_{\rm per}$. This implies the convexity of $J_h$, due to the fact that 
\begin{eqnarray}
\delta_{\phi\phi} J_h (\phi)(v,v)= \langle  ( - \Delta_h )^{-1} 
v , v \rangle + \dt  \left\langle  \frac{1}{1+\phi} + \frac{1}{1-\phi} ,  v^2  \right\rangle 
 + \varepsilon^2 \dt \langle \nabla_h v , \nabla_h v \rangle  \ge 0.
\end{eqnarray}
Setting $\delta_\phi J_h (\phi)(v) = 0$ concludes the proof.
\end{proof}

In a steepest descent approach for finding the minimizer of $J_h(\phi)$, the general strategy is to find the normalized steepest descent direction $d_n \in \mathring{\mathcal C}_{\rm per}$ such that
    \begin{eqnarray}\label{eq:nsdd-def}
\delta_\phi J_h(v)(d_n) = - \nrm{\delta_\phi J_h(v)}_*, 
    \end{eqnarray}
under the restriction that
    \begin{eqnarray}
\nrm{d_n}_{\mathring{\mathcal C}_{\rm per}}=1,
    \end{eqnarray}
for all $v\in \mathring{\mathcal C}_{\rm per}$, where $ \nrm{\, \cdot \, }_{\mathring{\mathcal C}_{\rm per}}$ is a norm on $\mathring{\mathcal C}_{\rm per}$ and $ \nrm{\, \cdot \, }_*$ is the standard dual norm defined by 
\begin{eqnarray}
\nrm{\delta_\phi J_h(v)}_*:= \sup_{u \in \mathring{\mathcal C}_{\rm per}}\frac{|\delta_\phi J_h(v)(u)|}{\nrm{u}_{\mathring{\mathcal C}_{\rm per}}}.
\end{eqnarray}
However, the steepest descent method is not optimal. We therefore introduce preconditioning. Specifically, we introduce the operator $A_h$ such that
\begin{align}\label{def:Ak}
    A_h \psi:= (-\Delta_h)^{-1} \psi + \dt  \psi - \varepsilon^2 \dt \Delta_h \psi ,
\end{align}
for $\psi \in \mathring{\mathcal C}_{\rm per}$. This operator is clearly symmetric and positive definite. In fact, the standard steepest descent solver would lead to a very slow iteration convergence rate, especially for a high-dimensional optimization problem with a high condition number. A preconditioning approach, such as the one given by~\eqref{def:Ak}, provides a search direction much closer to the exact error than the standard gradient direction. In fact, among the operators involved in the nonlinear scheme~\eqref{nonlinear equation-1}, a linearized approach has to be taken. In particular, the temporal derivative and the surface diffusion parts correspond to constant-coefficient linear terms, so that the form of these two linear operators is exactly kept in the preconditioning process~\eqref{def:Ak}. Meanwhile, the nonlinear term $\dt ( \ln (1 + \phi) - \ln (1 - \phi))$ is monotone, and its linearized Lipschitz constant, which relies on its derivative, $\frac{\dt}{1 - \phi^2}$, is expected to be of $O (\dt)$, provided that the separation property is satisfied. In turn, we take a linear term, $\dt \psi$, to approximate the change associated with the nonlinear term. Such a combined choice will greatly improve the iteration convergence rate for a convex optimization with elliptic structure, as will be demonstrated by the theoretical analysis in the later sections. Also see the related analysis in~\cite{feng2017preconditioned}. 

With this operator at our disposal, the preconditioned steepest descent method is defined via the following algorithm.

\begin{algorithm}[H]
\caption{Preconditioned Steepest Descent Solver}\label{alg:psd}
\begin{algorithmic}
\State Define $\phi^{(0)}:= \phi^k$.
\State For $n \ge 0$, solve
\begin{align}\label{eqn:linearized}
    A_h d_n = f - \mathcal{N}_h (\phi^{(n)}),
\end{align}
for $d_n \in \mathring{\mathcal C}_{\rm per}$.
\State Determine step length $\bar{\alpha}$ via
\begin{align}\label{eqn:step-length}
\bar\alpha = {\rm argmin}_\alpha J_h(\phi^{(n)}+\alpha d_n) = {\rm argzero}_\alpha \Big\langle \delta_\phi J_h(\phi^{(n)}+\alpha d_n) , d_n \Big\rangle . 
\end{align} 
\State Set  
\begin{align}\label{eqn:psd}
\phi^{(n+1)}=\phi^{(n)}+\bar\alpha d_n . 
\end{align} 
\end{algorithmic}
\end{algorithm}

\begin{rem}
    Equation \eqref{eqn:linearized} can be exactly and efficiently implemented by an FFT-based finite difference solver.
\end{rem}

\begin{rem}
    It is observed that $J_h$ is a strictly convex function, and the convex energy terms, $\langle 1 + \phi , \ln (1 + \phi) \rangle$, $\langle 1 - \phi , \ln (1 - \phi) \rangle$, have singular and monotone derivatives as $\phi \searrow -1$ and $\phi \nearrow 1$. In turn, the one-parameter function in \eqref{eqn:step-length}, namely $\Big\langle \delta_\phi J_h(\phi^{(n)}+\alpha d_n) , d_n \Big\rangle$, is strictly convex in terms of $\alpha$, and this one-parameter function has singular and monotone derivatives as $(\phi^{(n)}+\alpha d_n ) \to \pm 1$. As a result, with an application of the positivity-preserving analysis technique reported in~\cite{chen19b}, there is a unique solution to this one-dimensional optimization problem, with $-1 < \phi^{(n+1)} < 1$, at a point-wise level. 
\end{rem}

\section{The geometric convergence of the preconditioned steepest descent solver} \label{sec:geometric-convergence}

We will now show that the preconditioned steepest descent solver has a geometric convergence rate. The proof of this fact requires the following lemmas and corollary.

\begin{lem}
The search direction $d_n$ defined in (\ref{eqn:linearized}) is the steepest descent direction, at $\phi^{(n)} \in W_h$ with respect to the norm $\norm{\, \cdot \, }{A_h}$, where
    \begin{eqnarray}
\nrm{u_n}_{A_h}^2= \nrm{u_n}_{-1,h}^2 + \dt \nrm{ u_n}_2^2+ \varepsilon^2 \dt \nrm{\nabla_h u_n}_2^2 .
    \end{eqnarray}
\end{lem}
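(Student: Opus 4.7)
My plan is to verify the steepest–descent characterization by rewriting the first variation of $J_h$ in terms of the inner product induced by $A_h$, and then applying the Cauchy–Schwarz inequality in that inner product. The key observation is that on $\mathring{\mathcal C}_{\rm per}$, the operator $A_h$ is symmetric and positive definite, so $\langle A_h \, \cdot \, , \, \cdot \, \rangle$ defines an inner product whose induced norm is exactly $\nrm{\,\cdot\,}_{A_h}$; a direct summation-by-parts calculation shows
\[
\langle A_h u , u \rangle = \nrm{u}_{-1,h}^2 + \dt \, \nrm{u}_2^2 + \varepsilon^2 \dt \, \nrm{\nabla_h u}_2^2 ,
\]
confirming the form of the norm stated in the lemma.

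The first step is to rewrite the Gateaux derivative (\ref{eqn:1st}) in standard $L^2$-pairing form using summation-by-parts on the surface-diffusion term, which gives
\[
\delta_\phi J_h(\phi^{(n)})(v) = \langle \mathcal{N}_h(\phi^{(n)}) - f , v \rangle ,
\qquad \forall v \in \mathring{\mathcal C}_{\rm per},
\]
with $\mathcal{N}_h$ as in (\ref{nonlinear equation-1}). Combining this with the defining relation (\ref{eqn:linearized}), namely $A_h d_n = f - \mathcal{N}_h(\phi^{(n)})$, yields the identity
\[
\delta_\phi J_h(\phi^{(n)})(v) = -\langle A_h d_n , v \rangle ,
\qquad \forall v \in \mathring{\mathcal C}_{\rm per}.
\]
Thus $-d_n$ is the Riesz representative of $\delta_\phi J_h(\phi^{(n)})$ in the Hilbert space $(\mathring{\mathcal C}_{\rm per}, \langle A_h \, \cdot \, , \, \cdot \, \rangle)$.

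The second step is to minimize $\delta_\phi J_h(\phi^{(n)})(d)$ over $d \in \mathring{\mathcal C}_{\rm per}$ with $\nrm{d}_{A_h} = 1$. Since $\delta_\phi J_h(\phi^{(n)})(d) = -\langle A_h d_n , d \rangle$, the Cauchy–Schwarz inequality in the $A_h$ inner product gives
\[
\bigl| \langle A_h d_n , d \rangle \bigr| \le \nrm{d_n}_{A_h} \, \nrm{d}_{A_h} = \nrm{d_n}_{A_h} ,
\]
with equality if and only if $d$ is a scalar multiple of $d_n$. Hence the minimum value $-\nrm{d_n}_{A_h}$ is attained uniquely at $d = d_n / \nrm{d_n}_{A_h}$, which means $d_n$ itself points in the (normalized) steepest-descent direction determined by (\ref{eq:nsdd-def}) with $\nrm{\,\cdot\,}_{\mathring{\mathcal C}_{\rm per}} = \nrm{\,\cdot\,}_{A_h}$ and $\nrm{\,\cdot\,}_* = \nrm{\,\cdot\,}_{A_h^{-1}}$; in fact $\nrm{\delta_\phi J_h(\phi^{(n)})}_* = \nrm{d_n}_{A_h}$.

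There is no serious obstacle: the argument is essentially a Riesz representation plus Cauchy–Schwarz in the preconditioner-induced inner product. The only point that requires care is the summation-by-parts step to move the surface-diffusion term into pointwise form so that the right-hand side of the Gateaux derivative and the right-hand side of (\ref{eqn:linearized}) live in the same space $\mathring{\mathcal C}_{\rm per}$, which is guaranteed since all admissible variations $v$ have zero mean.
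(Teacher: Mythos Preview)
Your proof is correct and follows essentially the same route as the paper: identify $d_n$ (up to sign) as the Riesz representative of $\delta_\phi J_h(\phi^{(n)})$ in the $A_h$-inner product, from which the steepest-descent property and the equality $\nrm{\delta_\phi J_h(\phi^{(n)})}_* = \nrm{d_n}_{A_h}$ follow. Your version is slightly more explicit---you verify the form of $\nrm{\,\cdot\,}_{A_h}$ via summation by parts and spell out the Cauchy--Schwarz step---whereas the paper simply invokes Riesz representation directly; your sign bookkeeping (noting that $-d_n$, not $d_n$, is the Riesz representative of the functional) is also more careful than the paper's.
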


\begin{proof}
The proof follows similarly to that found in \cite{ChenXC22a}. By definition, the normalized steepest decent direction at the point $v \in W_h$ is a vector $d \in \mathring{\mathcal C}_{\rm per}$ satisfying \eqref{eq:nsdd-def}. From \eqref{eqn:linearized}, we have 
\begin{eqnarray}
 \langle A_h d_n , v \rangle = -\delta_\phi J_h(\phi^{(n)})(v),
\end{eqnarray}
for all $v \in \mathring{\mathcal C}_{\rm per}$. Note that $d_n$ is the Riesz representation of the functional $\delta_\phi J_h(\phi^{(n)})$ in the space $\mathring{\mathcal C}_{\rm per}$ with respect to the norm $\norm{\, \cdot \, }{A_h}$. Hence  
 \begin{equation}
\nrm{d_n}_{A_h}=\nrm{\delta_\phi J_h(\phi^{(n)})}_*,
\end{equation} 
and 
 \begin{align}
\delta_\phi J_h(\phi^{(n)})(d_n)&=-\nrm{d_n}_{A_h}^2 
 =-\nrm{\delta_\phi J_h(\phi^{(n)})}_* \cdot \nrm{d_n}_{A_h},\label{eq:star-Ak-split}
\end{align} 
for all $d_n\in \mathring{\mathcal C}_{\rm per}$.
\end{proof}

\begin{cor}\label{cor:descent}
Let $\{\phi^{(n)}\}$ be the sequence generated by (\ref{eqn:psd}). Then we have 
    \begin{eqnarray}
J_h(\phi^{(n+1)})\leq J_h(\phi^{(n)}).
    \end{eqnarray}
\end{cor}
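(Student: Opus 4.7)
The plan is to read the inequality directly off the line-search definition of $\bar\alpha$. Set
\begin{equation*}
g(\alpha) := J_h(\phi^{(n)} + \alpha d_n),
\end{equation*}
viewed as a function of the scalar $\alpha$ on the open interval of values for which $\phi^{(n)} + \alpha d_n$ stays strictly between $\pm 1$ at every grid point. By \eqref{eqn:step-length}, $\bar\alpha$ is defined to be the unique minimizer of $g$. Once we know such an $\bar\alpha$ exists, is unique, and keeps the iterate admissible, the corollary reduces to the trivial comparison $g(\bar\alpha) \leq g(0)$.

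The existence, uniqueness, and interior-ness of $\bar\alpha$ is the content of the remark immediately following \eqref{eqn:psd}. The Hessian formula \eqref{eqn:2nd} yields $g''(\alpha) > 0$, so $g$ is strictly convex on its domain, while the $\langle 1 \pm \phi, \ln(1 \pm \phi)\rangle$ contributions force $g(\alpha) \to +\infty$ as the iterate $\phi^{(n)} + \alpha d_n$ approaches $\pm 1$ pointwise from within. The positivity-preserving analysis of \cite{chen19b}, applied to the one-parameter problem, then delivers a unique interior minimizer $\bar\alpha$ at which $-1 < \phi^{(n+1)} < 1$ pointwise. This is the only step that takes real work; I expect it to be the main (indeed only) obstacle, since the monotonicity itself is essentially automatic once the line search is known to be well-posed.

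With $\bar\alpha$ in hand, an inductive hypothesis that $\phi^{(n)}$ lies strictly in $(-1,1)$ (the base case $\phi^{(0)} = \phi^k$ being supplied by Theorem \ref{CH-positivity}) ensures that $\alpha = 0$ is a valid comparison point. Hence
\begin{equation*}
J_h(\phi^{(n+1)}) = g(\bar\alpha) \leq g(0) = J_h(\phi^{(n)}),
\end{equation*}
proving the claim. Equivalently, the preceding lemma together with \eqref{eq:star-Ak-split} gives $g'(0) = \delta_\phi J_h(\phi^{(n)})(d_n) = -\|d_n\|_{A_h}^2 \leq 0$, so strict convexity of $g$ forces $g$ to be non-increasing on $[0,\bar\alpha]$, with strict decrease whenever $d_n \neq 0$ (i.e., whenever $\phi^{(n)}$ is not already the minimizer of $J_h$, in which case the iteration has already converged).
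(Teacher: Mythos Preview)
Your proposal is correct and matches the paper's intended approach: the paper states this as an immediate corollary with no proof, relying precisely on the line-search definition \eqref{eqn:step-length} (so $g(\bar\alpha)\le g(0)$) together with the preceding lemma's identity $g'(0)=-\|d_n\|_{A_h}^2\le 0$. You have simply supplied the details the paper omits.
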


\begin{lem}\label{lem:lemma4}
For any $u, v \in \mathring{\mathcal C}_{\rm per}$, the following inequality is valid  
\begin{eqnarray}
\langle \delta _\phi J_h(u)-\delta _\phi J_h(v), u-v \rangle \geq C_{\rm LB} \nrm{u-v}_{A_h}^2 , 
\label{lem 4-0}
\end{eqnarray}
with $C_{\rm LB} = \min \left\{ \frac12, \varepsilon \dt^{-\frac12} \right\}$.
\end{lem}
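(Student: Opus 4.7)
The plan is to expand the bilinear pairing using the first variation formula \eqref{eqn:1st}, discard the nonlinear (logarithmic) contributions via monotonicity, and then absorb the $\Delta t \|u-v\|_2^2$ piece of $\|u-v\|_{A_h}^2$ into the remaining quadratic terms by the interpolation inequality from Proposition~\ref{prop:grad-minus1-split}.

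Concretely, set $w := u - v \in \mathring{\mathcal C}_{\rm per}$. By the formula for $\delta_\phi J_h$ in~\eqref{eqn:1st}, I would write
\begin{align*}
\langle \delta_\phi J_h(u) - \delta_\phi J_h(v), w \rangle
&= \langle (-\Delta_h)^{-1} w, w\rangle
+ \Delta t \langle \ln(1+u) - \ln(1+v), w\rangle \\
&\quad - \Delta t \langle \ln(1-u) - \ln(1-v), w\rangle
+ \varepsilon^2 \Delta t \langle \nabla_h w, \nabla_h w\rangle.
\end{align*}
The two logarithmic brackets are non-negative point-wise by the mean-value theorem: $\ln(1+\cdot)$ is monotone increasing and $-\ln(1-\cdot)$ is monotone increasing, so each product has the sign of $(u-v)^2$. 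Discarding these nonnegative terms and recognising $\langle (-\Delta_h)^{-1} w, w\rangle = \|w\|_{-1,h}^2$ leaves
\begin{equation*}
\langle \delta_\phi J_h(u) - \delta_\phi J_h(v), w \rangle
\ge \|w\|_{-1,h}^2 + \varepsilon^2 \Delta t \, \|\nabla_h w\|_2^2.
\end{equation*}

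The remaining task is to show that the right-hand side dominates $C_{\rm LB}\|w\|_{A_h}^2 = C_{\rm LB}(\|w\|_{-1,h}^2 + \Delta t\|w\|_2^2 + \varepsilon^2 \Delta t\|\nabla_h w\|_2^2)$; equivalently, I need
\begin{equation*}
(1-C_{\rm LB})\|w\|_{-1,h}^2 + (1-C_{\rm LB})\varepsilon^2 \Delta t \|\nabla_h w\|_2^2 \ge C_{\rm LB} \Delta t \|w\|_2^2.
\end{equation*}
Here is where Proposition~\ref{prop:grad-minus1-split} comes in: $\|w\|_2^2 \le \|w\|_{-1,h}\|\nabh w\|_2$, so Young's inequality with a parameter $\eta>0$ gives
\begin{equation*}
C_{\rm LB}\Delta t\|w\|_2^2 \le \frac{C_{\rm LB}^2 \Delta t}{2\eta}\|w\|_{-1,h}^2 + \frac{\eta\Delta t}{2}\|\nabh w\|_2^2.
\end{equation*}
I would then choose $\eta = 2(1-C_{\rm LB})\varepsilon^2$ so the $\|\nabh w\|_2^2$ term matches exactly, leaving the single inequality $C_{\rm LB}\sqrt{\Delta t} \le 2(1-C_{\rm LB})\varepsilon$ to verify.

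The only mildly delicate point is a short case analysis: when $\varepsilon\Delta t^{-1/2}\ge 1/2$ I take $C_{\rm LB}=1/2$ and the inequality becomes $\sqrt{\Delta t}/2 \le \varepsilon$, which is exactly the defining condition; when $\varepsilon\Delta t^{-1/2}<1/2$ I take $C_{\rm LB}=\varepsilon\Delta t^{-1/2}$ and the inequality reduces to $\varepsilon\Delta t^{-1/2}\le 1/2$, again the defining condition. Neither case is truly an obstacle, so the main conceptual step is the (standard but crucial) reduction to the interpolation inequality of Proposition~\ref{prop:grad-minus1-split}; once that is in place, everything else is bookkeeping with Young's inequality.
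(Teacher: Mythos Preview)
Your proof is correct and follows essentially the same route as the paper: expand $\delta_\phi J_h$, discard the logarithmic contributions by monotonicity, and then use Proposition~\ref{prop:grad-minus1-split} to manufacture the missing $\dt\|w\|_2^2$ term from $\|w\|_{-1,h}^2$ and $\varepsilon^2\dt\|\nabla_h w\|_2^2$. The only cosmetic difference is that the paper applies the interpolation in the symmetric AM--GM form $\tfrac12\|w\|_{-1,h}^2+\tfrac12\varepsilon^2\dt\|\nabla_h w\|_2^2\ge \varepsilon\sqrt{\dt}\,\|w\|_2^2$, which lets one read off $C_{\rm LB}=\min\{1/2,\varepsilon\dt^{-1/2}\}$ directly, whereas you carry a Young parameter $\eta$ and close with a two-case check; the underlying idea is identical.
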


\begin{proof}
A careful calculation reveals that 
\begin{align} 
  \langle \delta _\phi J_h(u)-\delta _\phi J_h(v), u-v \rangle &=
  \dt \langle \ln (1+u) - \ln (1+v) - \ln (1-u) + \ln (1-v)  , u-v \rangle  
  \nonumber
\\
  &\quad+ \| u - v \|_{-1, h}^2 + \varepsilon^2 \dt \| \nabla_h (u-v) \|_2^2   .   
  \label{lem 4-1} 
\end{align} 
Meanwhile, the following estimates are available by the convexity of the logarithmic terms: 
\begin{eqnarray}  
  \langle \ln (1+u) - \ln (1+v)  , u-v \rangle  \ge 0 , 
\end{eqnarray}
  and
\begin{eqnarray}
  \langle - \ln (1-u) + \ln (1-v)  , u-v \rangle  \ge 0 . 
   \label{lem 4-2} 
\end{eqnarray}    
As a consequence, we get 
\begin{align}  
  \langle \delta _\phi J_h(u)-\delta _\phi J_h(v), u-v \rangle &\ge  \| u - v \|_{-1, h}^2   
  + \varepsilon^2 \dt \nrm{\nabla_h (u-v)}_2^2  
  \nonumber 
\\
  &\ge \frac12 \| u - v \|_{-1, h}^2 + \frac12 \varepsilon^2 \dt \nrm{\nabla_h (u-v)}_2^2 + \varepsilon \sqrt{\dt} \| u-v \|_2^2 ,  \label{lem 4-3} 
\end{align} 
in which we have utilized Proposition \ref{prop:grad-minus1-split} to obtain 
\begin{align} 
  \frac12 \| u - v \|_{-1, h}^2 + \frac12 \varepsilon^2 \dt \nrm{\nabla_h (u-v)}_2^2  
  &\ge  \varepsilon \sqrt{\dt}  \| u - v \|_{-1,h} \cdot \nrm{\nabla_h (u-v)}_2  
    \nonumber
\\
  &\ge \varepsilon \sqrt{\dt} \| u-v \|_2^2 . 
  \label{lem 4-4} 
\end{align}     
In comparison with the form of $\nrm{u-v}_{A_h}^2$: 
\begin{eqnarray} 
  \nrm{u-v}_{A_h}^2 = \| u - v \|_{-1, h}^2  + \dt \nrm{u-v}_2^2 + \varepsilon^2 \dt \nrm{\nabla_h (u-v)}_2^2  ,  \label{lem 4-5} 
\end{eqnarray} 
we conclude that estimate \eqref{lem 4-0} is valid by choosing $C_{\rm LB} = \min \left\{ \frac12, \varepsilon \dt^{-\frac12} \right\}$.   
\end{proof}

\begin{lem}\label{lem:lemma5}
Let $\{\phi^{(n)}\}$ be the sequence generated by \eqref{eqn:psd}. Furthermore, suppose that 
\begin{equation} 
   1 + \phi^{(n)} \ge \frac{\epsilon_0}{4} ,  \quad 
   1 - \phi^{(n)} \ge \frac{\epsilon_0}{4} , 
   \label{assmp:strict-sep}  
\end{equation} 
at a point-wise level and define the iteration error $e_n:=J_h(\phi^{(n)})-J_h(\phi)$. Then we have 
\begin{align}
e_n \le \langle \delta _\phi J_h(\phi^{(n)})-\delta _\phi J_h(\phi), \phi^{(n)}-\phi \rangle \le C_{\rm UB}  \nrm{\delta_\phi J_h(\phi^{(n)})}_*^2, \label{lem 5-0-1} 
    \end{align}
and
    \begin{align}
 \left| \delta_{\phi\phi} J_h(\theta^n)(d_n,d_n) \right|  \le C_{\rm D2} \nrm{d_n}_{A_h}^2 , \label{lem 5-0-2} 
 \end{align}
for any $\theta^n$ in the line segment from $\phi^{(n)}$ to $\phi^{(n+1)}$, where the constants $C_{\rm UB}$, $C_{\rm D2}$ have the following forms: 
\begin{align} 
  C_{\rm UB} = C_{\rm LB}^{-1} = \max \left\{ 2, \varepsilon^{-1} \dt^{\frac12}  \right\} ,  \quad \text{and} \quad
  C_{\rm D2} = 1 + 4 \epsilon_0^{-1} \varepsilon^{-1} \dt^{\frac12} . 
  \label{lem 5-0-3}  
\end{align}  
\end{lem}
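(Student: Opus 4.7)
\textbf{Plan of proof for Lemma \ref{lem:lemma5}.}

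The first inequality in \eqref{lem 5-0-1} is a direct consequence of the convexity of $J_h$ (established via \eqref{eqn:2nd}) combined with the fact that $\phi$ is the exact minimizer so that $\delta_\phi J_h(\phi) = 0$. Specifically, the convexity estimate $J_h(\phi) \ge J_h(\phi^{(n)}) + \langle \delta_\phi J_h(\phi^{(n)}), \phi - \phi^{(n)} \rangle$ yields $e_n \le \langle \delta_\phi J_h(\phi^{(n)}), \phi^{(n)} - \phi \rangle$, and inserting $-\delta_\phi J_h(\phi) = 0$ gives the identity. For the second inequality, I will chain Lemma \ref{lem:lemma4} with the dual-norm characterization used in the previous lemma. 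Noting that $\phi^{(n)} - \phi \in \mathring{\mathcal{C}}_{\rm per}$ (both iterates share the mass $\overline{\phi^k}$), Lemma \ref{lem:lemma4} gives $C_{\rm LB}\|\phi^{(n)} - \phi\|_{A_h}^2 \le \langle \delta_\phi J_h(\phi^{(n)}) - \delta_\phi J_h(\phi), \phi^{(n)} - \phi \rangle$, while the definition of the dual norm $\|\cdot\|_*$ with respect to $\|\cdot\|_{A_h}$ yields $\langle \delta_\phi J_h(\phi^{(n)}), \phi^{(n)}-\phi \rangle \le \|\delta_\phi J_h(\phi^{(n)})\|_* \|\phi^{(n)}-\phi\|_{A_h}$. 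Cancelling one factor of $\|\phi^{(n)}-\phi\|_{A_h}$ then feeding the resulting bound back into the chain produces $e_n \le C_{\rm LB}^{-1}\|\delta_\phi J_h(\phi^{(n)})\|_*^2$, which is exactly $C_{\rm UB}\|\delta_\phi J_h(\phi^{(n)})\|_*^2$.

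For \eqref{lem 5-0-2}, I would start from the explicit formula \eqref{eqn:2nd} evaluated at $\theta^n$ with test vectors $v=w=d_n$, which gives
\[
\delta_{\phi\phi} J_h(\theta^n)(d_n,d_n) = \|d_n\|_{-1,h}^2 + \dt \left\langle \tfrac{1}{1+\theta^n} + \tfrac{1}{1-\theta^n}, d_n^2\right\rangle + \varepsilon^2\dt \|\nabla_h d_n\|_2^2.
\]
The first and third terms are already controlled by $\|d_n\|_{A_h}^2$. For the singular middle term, the strict separation hypothesis $1\pm\phi^{(n)}\ge \epsilon_0/4$ (together with its analogue for $\phi^{(n+1)}$, see below) gives $1\pm\theta^n\ge \epsilon_0/4$ by convexity of the line segment, so the integrand is pointwise bounded by $8/\epsilon_0$, yielding a bound of $(8\dt/\epsilon_0)\|d_n\|_2^2$. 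The key trick is then to avoid bounding $\dt\|d_n\|_2^2$ by $\|d_n\|_{A_h}^2$ directly, because this would not produce the factor $\varepsilon^{-1}\dt^{1/2}$ expected in $C_{\rm D2}$. Instead, I will use Proposition \ref{prop:grad-minus1-split} to write $\|d_n\|_2^2 \le \|d_n\|_{-1,h}\|\nabla_h d_n\|_2$, followed by a weighted Young's inequality with parameter $\varepsilon\sqrt{\dt}$:
\[
\|d_n\|_{-1,h}\|\nabla_h d_n\|_2 \le \tfrac{1}{2\varepsilon\sqrt{\dt}}\|d_n\|_{-1,h}^2 + \tfrac{\varepsilon\sqrt{\dt}}{2}\|\nabla_h d_n\|_2^2.
\]
Multiplying by $8\dt/\epsilon_0$ factors out $4\sqrt{\dt}/(\varepsilon\epsilon_0)$ times $(\|d_n\|_{-1,h}^2 + \varepsilon^2\dt\|\nabla_h d_n\|_2^2) \le \|d_n\|_{A_h}^2$, producing precisely the constant $4\epsilon_0^{-1}\varepsilon^{-1}\dt^{1/2}$ to be added to the leading $1$.

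The main obstacle will be making rigorous the pointwise bound $1\pm\theta^n\ge\epsilon_0/4$ for the intermediate argument. The hypothesis \eqref{assmp:strict-sep} only covers $\phi^{(n)}$, so a matching bound on $\phi^{(n+1)} = \phi^{(n)} + \bar\alpha d_n$ is needed along the full segment. I expect to invoke the positivity-preserving analysis of the one-parameter convex optimization in \eqref{eqn:step-length} (mentioned in the remark after Algorithm \ref{alg:psd} and referring back to the strategy of \cite{chen19b}): the singular logarithmic barriers force the unique minimizer $\bar\alpha$ to keep $\phi^{(n+1)}$ strictly inside $(-1,1)$, and a careful barrier-type estimate, exploiting that the energy cannot decrease past the singularities, should in fact deliver the quantitative bound $1\pm\phi^{(n+1)}\ge \epsilon_0/4$. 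Once this quantitative separation along the segment is established, the convexity of the segment between $\phi^{(n)}$ and $\phi^{(n+1)}$ propagates the bound to every $\theta^n$, closing the argument.
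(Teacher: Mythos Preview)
Your argument is essentially the paper's argument. For \eqref{lem 5-0-1} the only cosmetic difference is that you cancel one factor of $\|\phi^{(n)}-\phi\|_{A_h}$ directly, whereas the paper reaches the same endpoint via Young's inequality (split $\|\delta_\phi J_h(\phi^{(n)})\|_*\|\phi^{(n)}-\phi\|_{A_h}$ with weight $C_{\rm LB}$, then absorb the $\|\cdot\|_{A_h}^2$ piece using Lemma~\ref{lem:lemma4}); both routes are valid and yield $C_{\rm UB}=C_{\rm LB}^{-1}$. For \eqref{lem 5-0-2} your plan coincides with the paper's: expand $\delta_{\phi\phi}J_h(\theta^n)(d_n,d_n)$, bound the singular coefficient by $8/\epsilon_0$ under separation, and convert $\dt\|d_n\|_2^2$ into $\varepsilon^{-1}\dt^{1/2}(\|d_n\|_{-1,h}^2+\varepsilon^2\dt\|\nabla_h d_n\|_2^2)$ via Proposition~\ref{prop:grad-minus1-split}.

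You have, however, correctly put your finger on a point that the paper glosses over. The paper simply writes $S_1=\|2/(1-(\phi^{(n)})^2)\|_\infty\le 8\epsilon_0^{-1}$ using the hypothesis \eqref{assmp:strict-sep} at $\phi^{(n)}$, and then applies this same $S_1$ in the bound for $\delta_{\phi\phi}J_h(\theta^n)$; it does not justify the separation along the segment to $\phi^{(n+1)}$ at this stage. Your proposed fix---a barrier-type estimate on the one-parameter minimization---will \emph{not} produce the quantitative bound $1\pm\phi^{(n+1)}\ge\epsilon_0/4$: the singular-barrier argument from \cite{chen19b} only forces $-1<\phi^{(n+1)}<1$. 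In the paper, the quantitative separation for the iterates is in fact established \emph{later}, in the inductive proof of Theorem~\ref{theorem:convergence-num-error}, by combining the $H_h^1$ error contraction with the inverse inequality~\eqref{inverse ineq-1} and the separation of the exact numerical solution (Lemma~\ref{lem:sep-num-sol}). So Lemma~\ref{lem:lemma5} is really intended to be invoked inside that induction, with the separation on the whole segment carried as part of the inductive hypothesis rather than derived here.
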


\begin{proof} 
By the properties of the convexity, we have
\begin{eqnarray}
e_n = J_h(\phi^{(n)})-J_h(\phi) \leq \langle \delta _\phi J_h(\phi^{(n)})-\delta _\phi J_h(\phi), \phi^{(n)}-\phi \rangle.
\end{eqnarray}
Using the fact that $\delta_\phi J_h(\phi) \equiv 0$, combined with an application of inequality \eqref{lem 4-0} (from Lemma~\ref{lem:lemma4}) and Young's inequality, we arrive at 
\begin{align}
&\langle \delta _\phi J_h(\phi^{(n)})-\delta _\phi J_h(\phi), \phi^{(n)}-\phi \rangle 
\nonumber \\
&\qquad= \langle \delta _\phi J_h(\phi^{(n)}) , \phi^{(n)}-\phi \rangle
\nonumber\\
&\qquad\le \nrm{ \delta_\phi J_h(\phi^{(n)})}_*\nrm{\phi-\phi^{(n)}}_{A_h}
\nonumber
\\
&\qquad\le \frac12 C_{\rm LB}^{-1} \nrm{ \delta_\phi J_h(\phi^{(n)})}_*^2+\frac{1}{2} C_{\rm LB} \nrm{\phi-\phi^{(n)}}_{A_h}^2
\nonumber\\
&\qquad\le \frac12 C_{\rm LB}^{-1} \nrm{ \delta_\phi J_h(\phi^{(n)})}_*^2+ \frac12
(\delta _\phi J_h(\phi^{(n)})-\delta _\phi J_h(\phi), \phi^{(n)}-\phi).
\end{align}
Therefore, we can take constant $C_{\rm UB} = C_{\rm LB}^{-1} = \max \left\{ 2, \varepsilon^{-1} \dt^{\frac12}  \right\}$, such that
\begin{eqnarray}
e^n \leq \langle \delta _\phi J_h(\phi^{(n)})-\delta _\phi J_h(\phi), \phi^{(n)}-\phi \rangle \leq C_{\rm UB}  \nrm{ \delta_\phi J_h(\phi^{(n)})}_*^2.
\end{eqnarray} 

Next we derive an estimate for $\left| \delta_{\phi\phi} J_h(\theta^n)(d_n,d_n) \right|$. We begin by applying the discrete H\"older inequality to \eqref{eqn:1st} and \eqref{eqn:2nd} to obtain the following bounds:
\begin{align} 
|\delta_\phi J_h (\phi)(v)| &\leq   \nrm{\phi}_{-1, h} 
 \cdot \nrm{v}_{-1, h} + S_0 \dt | \Omega |^\frac12 \nrm{v}_{2}+  \varepsilon^2 \dt \nrm{\nabla_h \phi}_2 \cdot \nrm{\nabla_h v}_2 + \nrm{f}_{2} \cdot \nrm{v}_{2}, \label{eqn:1stbound} 
\end{align}
and
\begin{align}
|\delta_{\phi\phi}J_h (\phi)(v,w)| &\leq \nrm{v}_{-1, h} \cdot \nrm{w}_{-1, h}  + S_1 \dt \nrm{v}_{2} \cdot \nrm{w}_{2} + \varepsilon^2 \dt \nrm{\nabla_h v}_2 \cdot \nrm{\nabla_h w}_2, \label{eqn:2ndbound}
\end{align} 
where
\begin{align}
  S_0 = \nrm{ \ln \left(\frac{1+\phi}{1-\phi} \right) }_\infty , \quad \text{and} \quad S_1= \nrm{ \frac{2}{1- \phi^2} }_\infty  .  \label{eqn:bound-4}  
\end{align}
With the strict separation assumption \eqref{assmp:strict-sep} at hand, the following bounds for $S_0$ and $S_1$ become available:  
\begin{eqnarray} 
  S_0 = \nrm{ \ln \frac{1+ \phi^{k+1,(n)} }{1- \phi^{k+1,(n)}} }_\infty 
  \le \ln (4 \epsilon_0^{-1} ) ,   \quad \text{and} \quad
  S_1 = \nrm{ \frac{2}{1- ( \phi^{k+1,(n)} )^2} }_\infty  
  \le 8 \epsilon_0^{-1} .  \label{a priori-2}  
\end{eqnarray} 
Thus, with an application of \eqref{eqn:2ndbound}, we get  
\begin{eqnarray}
  \left| \delta_{\phi\phi} J_h(\theta^n)(d_n,d_n) \right| 
   \le \nrm{d_n}_{-1, h}^2 + S_1 \dt \nrm{d_n}_2^2 + \varepsilon^2 \dt \nrm{\nabla_h d_n}_2^2 . \label{lem 5-1} 
\end{eqnarray}

On the other hand, an application of the discrete Sobolev inequality (\ref{lem 3-0}) (in Proposition~\ref{prop:grad-minus1-split}) indicates that 
\begin{eqnarray} 
    \nrm{d_n}_{-1, h}^2 + \varepsilon^2 \dt \nrm{\nabla_h d_n}_2^2 \ge 2 \varepsilon \dt^\frac12   \nrm{d_n}_{-1,h} \cdot \nrm{\nabla_h d_n}_2  
    \ge 2 \varepsilon \dt^\frac12   \nrm{d_n}_2^2 . 
    \label{lem 5-3}
\end{eqnarray}  
Consequently, a substitution of (\ref{lem 5-3}) into (\ref{lem 5-1}) yields 
\begin{align}
  \left| \delta_{\phi\phi} J_h(\theta^n)(d_n,d_n) \right| 
   &\le  \nrm{d_n}_{-1, h}^2  + \varepsilon^2 \dt \nrm{\nabla_h d_n}_2^2    
    + S_1 \dt \nrm{d_n}_2^2   
     \nonumber\\
  &\le \left( 1 + 4 \epsilon_0^{-1} \varepsilon^{-1} \dt^{\frac12}  \right)   
   ( \nrm{d_n}_{-1,h}^2 + \varepsilon^2 \dt \nrm{\nabla_h d_n}_2^2 )  .    
   \label{lem 5-4-1} 
\end{align}
In comparison with the form of $\nrm{d_n}_{A_h}^2$: 
\begin{eqnarray} 
  \nrm{d_n}_{A_h}^2 = \| d_n \|_{-1, h}^2  + \dt \nrm{d_n}_2^2 + \varepsilon^2 \dt \nrm{\nabla_h d_n}_2^2  ,  \label{lem 5-4-2} 
\end{eqnarray} 
we conclude that estimate (\ref{lem 5-0-2}) is valid by choosing $C_{\rm D2} = 1 + 4 \epsilon_0^{-1} \varepsilon^{-1} \dt^{\frac12}$. 
\end{proof}

\begin{rem} 
  We see that $C_{\rm UB} = O (1)$ if $\dt = O (\varepsilon^2)$, while $C_{\rm UB} = O (\varepsilon^{-1} \dt^{\frac12})$ with a small $\varepsilon$ value. A similar scaling law is available for $C_{\rm D2}$. Specifically, $C_{\rm D2} = O (\epsilon_0^{-1})$ if $\dt = O (\varepsilon^2)$, while $C_{\rm D2} = O (\epsilon_0^{-1} \varepsilon^{-1} \dt^{\frac12})$ with a small $\varepsilon$ value.
\end{rem}

\begin{thm} \label{theorem:convergence}
Under the separation assumption~\eqref{assumption:separation} for the exact PDE solution, let $\{\phi^{(n)}\}$ be the sequence generated by \eqref{eqn:psd}. Furthermore, it is assumed  that 
\begin{equation} 
   1 + \phi^{(n)} \ge \frac{\epsilon_0}{4} ,  \quad 
   1 - \phi^{(n)} \ge \frac{\epsilon_0}{4} , 
\end{equation} 
at a point-wise level. Then we have 
\begin{eqnarray}
e_n \leq \left(1 - \frac{1}{2 C_{\rm UB} C_{\rm D2}} \right)^n e_0 , \quad \text{with} \quad \frac{1}{2 C_{\rm UB} C_{\rm D2}} < 1.  
 \label{error convergence-0}
\end{eqnarray}
\end{thm}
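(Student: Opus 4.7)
The plan is to prove a one-step contraction $e_{n+1} \le (1 - \kappa)\, e_n$ with $\kappa := 1/(2 C_{\rm UB} C_{\rm D2})$, and then iterate in $n$. By construction, $\bar\alpha$ in \eqref{eqn:step-length} minimizes $\alpha \mapsto J_h(\phi^{(n)} + \alpha d_n)$, so $J_h(\phi^{(n+1)}) \le J_h(\phi^{(n)} + \alpha d_n)$ holds for every feasible $\alpha$; it therefore suffices to exhibit one particular $\alpha$ producing the claimed decrease.

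I would apply Taylor's theorem along the search direction: for some $\theta$ on the segment between $\phi^{(n)}$ and $\phi^{(n)} + \alpha d_n$,
\begin{equation*}
J_h(\phi^{(n)} + \alpha d_n) = J_h(\phi^{(n)}) + \alpha\, \langle \delta_\phi J_h(\phi^{(n)}), d_n \rangle + \tfrac{\alpha^2}{2}\, \delta_{\phi\phi} J_h(\theta)(d_n, d_n) .
\end{equation*}
The first-order term equals $-\alpha\, \nrm{d_n}_{A_h}^2$ by \eqref{eq:star-Ak-split}, while the Hessian term is bounded above in absolute value by $(\alpha^2/2)\, C_{\rm D2}\, \nrm{d_n}_{A_h}^2$ by \eqref{lem 5-0-2}. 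Minimizing the resulting scalar quadratic in $\alpha$ at $\alpha = 1/C_{\rm D2}$ gives the sufficient-decrease bound
\begin{equation*}
J_h(\phi^{(n+1)}) \le J_h(\phi^{(n)}) - \frac{\nrm{d_n}_{A_h}^2}{2\, C_{\rm D2}} ,
\end{equation*}
so that $e_{n+1} \le e_n - \nrm{d_n}_{A_h}^2/(2\, C_{\rm D2})$.

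Next, I would combine this sufficient decrease with the upper bound in \eqref{lem 5-0-1}. Viewing \eqref{eqn:linearized} as $\langle A_h d_n, v\rangle = -\delta_\phi J_h(\phi^{(n)})(v)$ for all $v \in \mathring{\mathcal C}_{\rm per}$ shows that $\nrm{d_n}_{A_h} = \nrm{\delta_\phi J_h(\phi^{(n)})}_*$, so \eqref{lem 5-0-1} rewrites as $e_n \le C_{\rm UB}\, \nrm{d_n}_{A_h}^2$. Substituting $\nrm{d_n}_{A_h}^2 \ge e_n / C_{\rm UB}$ into the previous display yields
\begin{equation*}
e_{n+1} \le \left(1 - \frac{1}{2\, C_{\rm UB}\, C_{\rm D2}}\right) e_n ,
\end{equation*}
and induction on $n$ delivers \eqref{error convergence-0}. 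The strict inequality $1/(2 C_{\rm UB} C_{\rm D2}) < 1$ is immediate from $C_{\rm UB} \ge 2$ and $C_{\rm D2} \ge 1$.

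The main obstacle is to ensure that Lemma \ref{lem:lemma5} is actually applicable at every iterate, which requires the point-wise separation $1 \pm \phi^{(n)} \ge \epsilon_0/4$ to hold for every $n$, as well as at the intermediate point $\theta$ entering the Hessian estimate. The separation at $\phi^{(n)}$ is built into the hypothesis of the theorem, but a complete argument must also show that the line-search step \eqref{eqn:step-length}—together with the singular monotone structure of the logarithmic terms in $J_h$, which forces the one-parameter objective to blow up as the argument approaches $\pm 1$, and the energy-descent property from Corollary \ref{cor:descent}—prevents the iterates from approaching the singular limits. This propagation of the strict separation bound from step $n$ to step $n+1$, rather than the quadratic-decrease argument itself, is the substantive issue that the convex-splitting/positivity-preserving framework of \cite{chen19b} has been designed to control.
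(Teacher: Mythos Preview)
Your proof is correct and follows essentially the same approach as the paper: Taylor expansion along the search direction, bounding the Hessian term via \eqref{lem 5-0-2}, choosing the test step $\alpha = 1/C_{\rm D2}$ to obtain the sufficient decrease, and then combining with \eqref{lem 5-0-1} (together with the Riesz identity $\nrm{d_n}_{A_h} = \nrm{\delta_\phi J_h(\phi^{(n)})}_*$) to close the contraction.

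One remark on your final paragraph: the concern you raise there is unnecessary for the proof of this theorem as stated. The separation bound $1 \pm \phi^{(n)} \ge \epsilon_0/4$ for every $n$ is part of the \emph{hypothesis} of Theorem~\ref{theorem:convergence}, not something to be established within its proof; and the Hessian bound at the intermediate point $\theta^n$ is exactly the content of \eqref{lem 5-0-2} in Lemma~\ref{lem:lemma5}, which you are entitled to invoke as a black box. The propagation of the separation property through the iteration is handled separately (in the paper, via Theorem~\ref{theorem:convergence-num-error}), so your core argument already constitutes a complete proof of the present statement.
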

\begin{proof}
From the definition of the steepest descent direction, we apply \eqref{lem 5-0-2} (from Lemma~\ref{lem:lemma5}) and get the following inequality, for an arbitrary $\alpha$: 
\begin{align}
J_h(\phi^{(n)}+\alpha d_n)-J_h(\phi^{(n)})&= \alpha\delta_\phi J_h(\phi^{(n)})(d_n) +\frac{\alpha^2}{2} \delta_{\phi\phi} J_h(\theta)(d_n,d_n)
\nonumber\\
&\le \alpha\delta_\phi J_h(\phi^{(n)})(d_n) +\frac{\alpha^2}{2} C_{\rm D2} \nrm{d_n}_{A_h}^2
\nonumber\\
&=\big(-\alpha+\frac{\alpha^2}{2}C_{\rm D2} \big)\nrm{ \delta_\phi J_h(\phi^{(n)})}^2_*.
\end{align}
Hence, the minimum is achieved at $\bar{\alpha}=\frac{1}{C_{\rm D2}}$ and we have 
\begin{align}
e_{n+1}-e_{n} 
&= J_h(\phi^{(n+1)})-J_h(\phi^{(n)})
\nonumber\\
&\le J_h(\phi^{(n)}+\bar{\alpha}d_n)-J_h(\phi^{(n)})
\nonumber\\
&= -\frac{1}{2C_{\rm D2}} \nrm{ \delta_\phi J_h(\phi^{(n)})}^2_*.
\end{align}
Therefore,
\begin{align}
e_{n}-e_{n+1} &\ge  \frac{1}{2C_{\rm D2}} \nrm{ \delta_\phi J_h(\phi^{(n)})}^2_*
\nonumber\\
&\ge  \frac{1}{2C_{\rm UB}C_{\rm D2}} e_n, 
\end{align}
in which the estimate \eqref{lem 5-0-1} of Lemma~\ref{lem:lemma5} was applied in the last step. Hence,
\begin{eqnarray}
e_{n+1}  \le  \left(1- \frac{1}{2C_{\rm UB}C_{\rm D2}} \right) e_n, \label{error-contraction} 
\end{eqnarray}
and the desired result follows. 
\end{proof}

\begin{rem} \label{rem: parameters} 
  A geometric convergence rate is assured by Theorem~\ref{theorem:convergence}. Regarding the convergence constant, we observe that $C_{\rm UB} C_{\rm D2} = O (\epsilon_0^{-1})$ for a time step choice of $\dt = O (\varepsilon^2)$, while $C_{\rm UB} C_{\rm D2} = O (\epsilon_0^{-1} \varepsilon^{-2} \dt)$ with a small $\varepsilon$ value. In turn, this estimate leads to a convergence rate of $\alpha_0^n$, with $\alpha_0 = 1 - O (\dt^{-1} \epsilon_0^{-1} \varepsilon^{\frac52})$ such that $0 < \alpha_0 < 1$ for $\dt = O (\varepsilon^2)$. 
  
  This analysis also verifies the following well-known fact observed in the extensive numerical experiments: the steepest descent nonlinear iteration provides a fast convergence for a small time step size. Specifically, with a smaller value of $\varepsilon$, the numerical implementation becomes more and more challenging. Fortunately, the choice of a small time step size also accelerates the convergence speed for a small value of $\varepsilon$.  
  
In fact, because of a general estimate $C_{\rm UB} C_{\rm D2} = O (\epsilon_0^{-1} \varepsilon^{-2} \dt)$, we see that an $O (1)$ geometric convergence rate is ensured if the time step size satisfies a constraint $\dt \varepsilon^{-2}  = O (\epsilon_0) \ll 1$. Meanwhile, for the first order scheme~\eqref{eqn:scheme}, the $\ell^\infty (0, T; H_h^{-1}) \cap \ell^2 (0, T; H_h^1)$ convergence analysis, as presented in~\cite{chen19b}, only requires a constraint $\dt \le \varepsilon^2$. This constraint is milder than the $O (1)$ geometric iteration convergence rate requirement, since the convexity analysis has greatly helped the error estimate. On the other hand, if an $\ell^\infty (0, T; \ell^2) \cap \ell^2 (0, T; H_h^2)$ error analysis is derived for the numerical scheme, a more severe time step constraint would be needed in the estimates, due to the complicated nonlinear expansion structure. Also see the related work~\cite{LiuC2021a}, in which the $\ell^\infty (0, T; \ell^2)$ error estimate has been presented for the Poisson-Nernst-Planck system.  
\end{rem}

\begin{rem} 
If the first order numerical scheme~\eqref{eqn:scheme} could be exactly implemented, the positivity-preserving, energy stability and optimal rate convergence analysis would be unconditional, i.e., the convergence estimate~\eqref{CH_LOG-convergence-0} would be always valid, and there is no constraint for the time step size $\dt$. Meanwhile, in terms of the PSD iteration solver to implement the numerical algorithm~\eqref{eqn:scheme}, the above estimates imply that, the iteration convergence rate will be greatly accelerated with an additional constraint $\dt \le O (\epsilon_0 \varepsilon^2)$. However, even if such an additional constraint is not satisfied, the iteration estimate~\eqref{error convergence-0} still indicates a geometric convergence rate for the PSD iteration, although the convergence speed will not be as good as the one with the additional constraint $\dt \le O (\epsilon_0 \varepsilon^2)$. Extensive numerical experiments have revealed that, five to ten iteration stages would be sufficient for the implementation in most practical computational examples, with reasonable physical parameters and time step sizes.   
\end{rem}

The contraction estimate \eqref{error convergence-0} is valid for the error of the discrete energy (\ref{eqn:disconvexenergy}). Meanwhile, such a contraction estimate is not directly available for the numerical error associated with the phase variable at the $k+1$ time step: $q_n := \phi^{(n)} - \phi$. However, we are still able to derive a geometric convergence estimate for such a numerical error. As in the previous section, we define $\phi:= \phi^{k+1}$. 

\begin{thm} \label{theorem:convergence-num-error}
Let the initial data $\Phi(\, \cdot \, ,t=0) \in C^6_{\rm per}(\Omega)$ and suppose the exact solution $\Phi$ for the Cahn-Hilliard equation~\eqref{CH equation-0} -- \eqref{CH-mu-0} is of regularity class $\mathcal{R}$. Additionally, suppose that the exact solution for the Cahn-Hilliard equation~\eqref{CH equation-0} -- \eqref{CH-mu-0} satisfies the separation property \eqref{assumption:separation}. Let $\phi^{(n)}$ be the sequence generated by \eqref{eqn:psd} and define the numerical error associated with the phase variable at the $(k+1)$-th time step to be $q_n := \phi^{(n)} - \phi$. Then, for any $n\ge 0$, provided $\dt$ and $h$ are sufficiently small and we take a linear refinement of $\dt$ such that $C_L h \le \dt \le C_U h$, it follows that
\begin{eqnarray}
    \| \nabla_h q_{n} \|_2^2 \le  \frac{2 C_{\rm R} }{\varepsilon^2}\left(1- \frac{1}{2C_{\rm UB}C_{\rm D2}} \right)^{n}  (\dt^\frac12 + h^\frac32) ,
\end{eqnarray}
and
\begin{eqnarray}
    \| q_{n} \|_{-1,h}^2 \le \left(1- \frac{1}{2C_{\rm UB}C_{\rm D2}} \right)^{n}  (\dt^\frac32 + h^\frac32),
\end{eqnarray}
which implies that
\begin{eqnarray}
    \| q_{n} \|_{H^1_h}^2 \le \left( \frac{\varepsilon^2 + 2 C_{\rm R} }{2\varepsilon^2} \right)\left(1- \frac{1}{2C_{\rm UB}C_{\rm D2}} \right)^{n}  (\dt^\frac12 + h^\frac32), \label{error-phi-0} 
\end{eqnarray}
with
\begin{eqnarray*}
    \frac{1}{2 C_{\rm UB} C_{\rm D2}} < 1.
\end{eqnarray*}
\end{thm}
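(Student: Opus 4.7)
The strategy is to convert the geometric decay of the energy error $e_n = J_h(\phi^{(n)}) - J_h(\phi)$ guaranteed by Theorem~\ref{theorem:convergence} into geometric decay of $q_n = \phi^{(n)} - \phi$ in the $A_h$ norm, and then read off the three advertised estimates by decomposing $\nrm{q_n}_{A_h}^2$. The bridge from the energy error to the norm error is the strong-convexity bound
\begin{equation*}
  e_n \ge \frac{C_{\rm LB}}{2} \nrm{q_n}_{A_h}^2 ,
\end{equation*}
which I would establish by integrating Lemma~\ref{lem:lemma4} along the segment from $\phi$ to $\phi^{(n)}$ (using that $\phi$ is the minimizer, so $\delta_\phi J_h(\phi) = 0$). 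Combined with Theorem~\ref{theorem:convergence}, this yields $\nrm{q_n}_{A_h}^2 \le \frac{2}{C_{\rm LB}}\left(1 - \frac{1}{2 C_{\rm UB} C_{\rm D2}}\right)^n e_0$.

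The second ingredient is a bound on the initial energy gap $e_0 = J_h(\phi^k) - J_h(\phi^{k+1})$. Since $\phi^{(0)} = \phi^k$, I expand the definition~\eqref{eqn:disconvexenergy}: the quadratic term $\hf \nrm{\, \cdot \, - \phi^k}_{-1,h}^2$ vanishes at $\phi^k$ and is nonnegative at $\phi^{k+1}$, and with $f = \theta_0 \dt \phi^k$ the surviving terms exactly produce $\dt$ times the quantity $R_k$ defined in Lemma~\ref{lem:Rk-bound}. Hence
\begin{equation*}
  e_0 \le \dt \cdot R_k \le \dt \cdot C_{\rm R}\,(\dt^{\frac12} + h^{\frac32}) ,
\end{equation*}
by~\eqref{R_k-est-5}. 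Substituting back gives $\nrm{q_n}_{A_h}^2 \le \frac{2 \dt C_{\rm R}}{C_{\rm LB}}\left(1 - \frac{1}{2 C_{\rm UB} C_{\rm D2}}\right)^n (\dt^{\frac12} + h^{\frac32})$.

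The three bounds in the conclusion now follow by reading off individual pieces of $\nrm{q_n}_{A_h}^2 = \nrm{q_n}_{-1,h}^2 + \dt \nrm{q_n}_2^2 + \varepsilon^2 \dt \nrm{\nabla_h q_n}_2^2$: the $\varepsilon^2 \dt$ prefactor of the gradient piece cancels the leading $\dt$ from $e_0$ to produce the advertised constant $2 C_{\rm R}/\varepsilon^2$; the $\nrm{\, \cdot \,}_{-1,h}^2$ piece picks up an extra factor of $\dt$, giving $\dt^{\frac32} + \dt h^{\frac32} \le \dt^{\frac32} + h^{\frac32}$ under the linear refinement; and for the $H_h^1$ estimate, Proposition~\ref{prop:grad-minus1-split} combined with Young's inequality yields $\nrm{q_n}_2^2 \le \hf \nrm{q_n}_{-1,h}^2 + \hf \nrm{\nabla_h q_n}_2^2$, so the $H_h^1$ norm is controlled by the two preceding estimates with the stated coefficient $(\varepsilon^2 + 2 C_{\rm R})/(2 \varepsilon^2)$.

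The main obstacle is subtle: Theorem~\ref{theorem:convergence} requires the pointwise separation $1 \pm \phi^{(n)} \ge \epsilon_0/4$ at \emph{every} iteration stage, which is not assumed in the present statement and must be verified inductively alongside the convergence estimate. The base case $n = 0$ is immediate from Lemma~\ref{lem:sep-num-sol} applied to $\phi^{(0)} = \phi^k$, which in fact gives $1 \pm \phi^k \ge \epsilon_0/2 > \epsilon_0/4$. For the inductive step, the derived $H_h^1$ bound on $q_{n+1}$ combined with the inverse inequality~\eqref{inverse ineq-1} (in the spirit of the argument in Lemma~\ref{lem:sep-num-sol}) produces $\nrm{q_{n+1}}_\infty \le \epsilon_0/4$ for sufficiently small $\dt, h$ under the linear refinement $C_L h \le \dt \le C_U h$; combined with the separation $1 \pm \phi^{k+1} \ge \epsilon_0/2$ already ensured by Lemma~\ref{lem:sep-num-sol}, this delivers $1 \pm \phi^{(n+1)} \ge \epsilon_0/4$ and closes the induction. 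This inductive separation maintenance, which interlocks the iteration-convergence estimate with the pointwise positivity bounds, is the principal technical step of the argument.
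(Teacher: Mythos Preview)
Your proposal is correct and follows essentially the same architecture as the paper: an induction that interlocks the geometric decay of $e_n$ from Theorem~\ref{theorem:convergence} with pointwise separation at each iterate via the inverse inequality and Lemma~\ref{lem:sep-num-sol}, together with the key identification $e_0 \le \dt\, R_k$ and Lemma~\ref{lem:Rk-bound}.

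The one substantive difference is the bridge from $e_n$ to norms of $q_n$. You invoke the $A_h$-strong-convexity bound $e_n \ge \tfrac{C_{\rm LB}}{2}\nrm{q_n}_{A_h}^2$ obtained by integrating Lemma~\ref{lem:lemma4}. The paper instead writes the exact second-order Taylor expansion
\[
e_{n+1} = \tfrac{1}{2}\,\delta_{\phi\phi} J_h(\beta)(q_{n+1},q_{n+1})
\ge \tfrac{1}{2}\nrm{q_{n+1}}_{-1,h}^2 + \tfrac{\varepsilon^2\dt}{2}\nrm{\nabla_h q_{n+1}}_2^2,
\]
simply discarding the nonnegative logarithmic second-variation term. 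This avoids the $C_{\rm LB}$ factor and is what produces the exact constants $2C_{\rm R}/\varepsilon^2$ and $1$ stated in the theorem; your route yields $2C_{\rm R}/(C_{\rm LB}\varepsilon^2)$ for the gradient piece, so your claim that the cancellation ``produces the advertised constant $2C_{\rm R}/\varepsilon^2$'' is slightly off. The fix is trivial: replace the integrated Lemma~\ref{lem:lemma4} bound by the direct Taylor expansion above. Otherwise the argument and the inductive separation maintenance match the paper's proof.
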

\begin{proof}
The proof proceeds by induction. The base case with $n=0$ follows from inequalities \eqref{CH_LOG-convergence-5-1} and \eqref{CH_LOG-convergence-5-2} and the fact that $\Delta t, h > 0$ are chosen small enough. 

Now, let 
\begin{eqnarray}
    \| \nabla_h q_{n} \|_2^2 \le  \frac{2 C_{\rm R} }{\varepsilon^2}\left(1- \frac{1}{2C_{\rm UB}C_{\rm D2}} \right)^{n}  (\dt^\frac12 + h^\frac32) ,
\end{eqnarray}
and
\begin{eqnarray}
    \| q_{n} \|_{-1,h}^2 \le \left(1- \frac{1}{2C_{\rm UB}C_{\rm D2}} \right)^{n}  (\dt^\frac32 + h^\frac32),
\end{eqnarray}
so that
\begin{eqnarray}
    \| q_{n} \|_{H^1_h}^2 \le \left( \frac{\varepsilon^2 + 2 C_{\rm R} }{2\varepsilon^2} \right)\left(1- \frac{1}{2C_{\rm UB}C_{\rm D2}} \right)^{n}  (\dt^\frac12 + h^\frac32),
\end{eqnarray}
for some $n>0$. As a result, the following iteration error estimate becomes available: 
\begin{eqnarray} 
  \| \nabla_h q_{n} \|_2  \le   \sqrt{2} \varepsilon^{-1} C_{\rm R}^\frac12  (\dt^\frac14 + h^\frac34)  , \label{error-phi-3}   
\end{eqnarray}   
in which the fact that $\left(1- \frac{1}{2C_{\rm UB}C_{\rm D2}} \right) <1$ has been applied. Its substitution into the discrete inverse inequality~\eqref{inverse ineq-1} leads to 
\begin{equation} 
  \| q_{n} \|_\infty \le \frac{C_{\rm Inv} \| \nabla_h q_{n} \|_2}{h^{\delta_0}} 
  \le   \sqrt{2} \varepsilon^{-1} C_{\rm Inv}  C_{\rm R}^\frac12  (\dt^\frac18 + h^{\frac12})   
  \le  \frac{\epsilon_0}{4} , 
  \label{error-phi-4} 
\end{equation} 
for $\delta_0 < \frac18$, and provided that $\dt$ and $h$ are sufficiently small. A combination of~\eqref{error-phi-4} and the strict separation property~\eqref{assumption:separation-1} (for the exact numerical solution of~\eqref{eqn:scheme}, at $m=k+1$) results in  
	\begin{equation} 
 1+ \phi^{(n)}, \, \, 1 - \phi^{(n)}  \ge \frac{\epsilon_0}{4} ,  \quad \mbox{at a point-wise level} .  
    \label{a priori-3}
	\end{equation}  	
Additionally, the following functional inequality is available:
\begin{align} 
J_h(\phi^{(n+1)})-J_h(\phi) &= \delta_\phi J_h(\phi)(q_{n+1}) +\frac{1}{2} \delta_{\phi\phi} J_h(\beta)(q_{n+1}, q_{n+1}) 
\nonumber\\
&= \frac{1}{2} \delta_{\phi\phi} J_h(\beta)(q_{n+1} ,q_{n+1} )  
\nonumber 
\\
  &\ge \frac12 \| q_{n+1} \|_{-1,h}^2 + \frac{\varepsilon^2 \dt}{2} \| \nabla_h q_{n+1} \|_2^2 , 
   \label{error-phi-1} 
\end{align}  
with $\beta$ in the line segment from $\phi^{(n)}$ to $\phi$. Note that the second step comes from the facts that $\delta_\phi J_h(\phi) \equiv 0$ and
\begin{eqnarray} 
   \left\langle  \frac{1}{1+ \theta} + \frac{1}{1- \theta} ,  q_{n+1}^2  \right\rangle  \ge 0 . 
\end{eqnarray} 
As a direct consequence, a combination of Lemma \ref{lem:lemma5} and Theorem \ref{theorem:convergence} yields 
\begin{align} 
  \frac12 \| q_{n+1} \|_{-1,h}^2 + \frac{\varepsilon^2 \dt}{2} \| \nabla_h q_{n+1} \|_2^2 &\le e_{n+1} 
  \nonumber \\ 
  &\le \left(1- \frac{1}{2C_{\rm UB}C_{\rm D2}} \right)^{n+1} e_0  
  \nonumber 
\\
  &\le
     \left(1- \frac{1}{2C_{\rm UB}C_{\rm D2}} \right)^{n+1}  \Big( - \frac12 \| \phi-\phi^k \|_{-1, h}^2 
  + \dt R_k \Big)  
  \nonumber
  \\
  &\le \dt \left(1- \frac{1}{2C_{\rm UB}C_{\rm D2}} \right)^{n+1} R_k  , \label{error-phi-2}   
\end{align}   
where $R_k$ has been defined in Lemma \ref{lem:Rk-bound} and we have used the fact that 
\begin{align*}
    - \langle f, \phi^{(o)} \rangle + \langle f, \phi \rangle  &= \langle f, \phi^{k+1} \rangle - \langle f, \phi^{k} \rangle
    \\
    &= \langle f, \phi^{k+1} - \phi^k \rangle
    \\
    &= \theta_0 \dt \langle \phi^k, \phi^{k+1} - \phi^k \rangle . 
\end{align*} 
Hence, 
\begin{eqnarray}
\| \nabla_h q_{n+1} \|_2^2 \le  \frac{2 C_{\rm R} }{\varepsilon^2}\left(1- \frac{1}{2C_{\rm UB}C_{\rm D2}} \right)^{n+1}  (\dt^\frac12 + h^\frac32) ,
\end{eqnarray}
and 
\begin{eqnarray}
    \| q_{n+1} \|_{-1,h}^2 \le \left(1- \frac{1}{2C_{\rm UB}C_{\rm D2}} \right)^{n+1}  (\dt^\frac32 + h^\frac32) ,
\end{eqnarray}
with
\begin{eqnarray*}
    \frac{1}{2 C_{\rm UB} C_{\rm D2}} < 1.
\end{eqnarray*}
An application of Proposition \ref{prop:grad-minus1-split} and the definition of the discrete $H^1$ norm concludes the proof. 
\end{proof}

\begin{rem}
We observe that, although the preliminary estimate~\eqref{R_k-est-5} gives $R_k = O (\dt^\frac14 + h^\frac34)$ at a theoretical level, the practical computations indicate that $R_k = O (\dt)$, since $\phi$ is the exact numerical solution $\phi^{k+1}$ for the convex splitting scheme, so that the iteration convergence could be accelerated.  
\end{rem}

\begin{rem} 
For simplicity of presentation, we only provide the analysis for the 2-D Cahn-Hilliard equation~\eqref{CH equation-0} with Flory-Huggins energy potential. For the 3-D gradient flow, the strict separation property is an open problem even for the PDE solution, and such a theoretical issue poses a great challenge in the associated numerical analysis. Meanwhile, if the strict separation property is available for the 3-D PDE solution, the iteration convergence analysis and the strict separation estimate for the related numerical solver could be derived in a similar manner. 
\end{rem} 

\begin{rem} 
For the sake of brevity, we only consider a constant mobility, ${\cal M} (\phi) \equiv 1$. If a $\phi$-dependent mobility function is involved, the iteration convergence estimate and positivity-preserving analysis for the corresponding PSD solver could be derived in a careful way, following the techniques presented in~\cite{ChenXC22a} to deal with a gradient flow with polynomial approximation potential. The technical details are left to interested readers. 
\end{rem} 

%

\begin{rem} 
The periodic boundary condition is considered in this article, for simplicity of presentation. Meanwhile, if a homogeneous Neumann boundary condition is imposed for the CH equation~\eqref{CH equation-0} -- \eqref{CH-mu-0}, given by   
\begin{equation} 
  \partial_{\n} \phi = 0 , \quad \partial_{\n} \mu = 0 ,  \qquad \mbox{on $\partial \Omega$} , 
  \label{BC-Neumann-1} 
\end{equation} 
the positivity-preserving, energy stability and optimal rate convergence analysis for the numerical scheme~\eqref{eqn:scheme} could be derived in the same manner. In fact, with a discrete approximation to the homogeneous Neumann boundary condition, the summation by parts formulas take the same form as the ones with a periodic boundary condition. The Sobolev interpolation inequality is also valid with a physical boundary condition, so that all the theoretical results become available. See the related works~\cite{chen16, diegel16, yan17} for the CH equation with a homogeneous Neumann boundary condition, in a polynomial approximation in the energy potential. In addition, the iteration convergence analysis for the PSD solver could also be established if the physical boundary condition is imposed, following similar ideas. 
\end{rem}

\section{Numerical results}  \label{sec:numerical results}

\subsection{Convergence test for the numerical scheme}

In this subsection we perform a numerical accuracy check for the numerical scheme~\eqref{eqn:scheme}, implemented by the proposed PSD iteration solver. The computational domain is chosen as $\Omega = [0,1]^2$, and the exact profile for the phase variable is set to be
	\begin{equation}
\Phi (x,y,t) = \frac{1}{\pi} \sin( 2 \pi x) \cos(2 \pi y) \cos( t) .
	\label{AC-1}
	\end{equation} 
With the choice of this exact profile, it is clear that the quantities $1 + \Phi$ and $1 - \Phi$ stay positive at a point-wise level, so that a uniform distance is available between the PDE solution and the singular limit values of $\pm 1$. Of course, to force $\Phi$ to satisfy the original PDE \eqref{CH equation-0} -- \eqref{CH-mu-0}, we must add an artificial, time-dependent forcing term. In turn, the numerical scheme~\eqref{eqn:scheme} is implemented to solve for (\ref{CH equation-0}), using the proposed PSD iteration. 

First, we verify the efficiency and accuracy of the proposed PSD iterative solver. The first time step is taken into consideration, and we take the spatial resolution as $N=256$ (with $h=\frac{1}{256}$). The discrete $\ell^\infty$ and $\ell^2$ iteration errors are displayed in Figure~\ref{fig: iteration_1}, in terms of the iteration number, if we take the time step size as $\dt=0.01$, and the interface width parameter as $\varepsilon=0.05$. The geometric convergence rate has been clearly observed in the iteration process, which justifies the theoretical analysis~\eqref{error-phi-0}. In fact, such an iteration has reached the machine precision within 20 iteration stages. In the practical computations, only 5 to 10 iteration stages are needed at each time step. 

Moreover, to investigate the iteration performance and its dependence on certain parameters, such as the time step size $\dt$ and interface width $\varepsilon$, we record the number of iterations to reach the machine precision (so that the discrete $\ell^2$ error is less than $10^{-15}$). In more details, the left plot of Figure~\ref{fig: iteration_2} displays the number of iterations in terms of $\varepsilon=0.01:0.01:0.1$, with a fixed $\dt=0.01$, while the right plot displays that in terms of $\dt=0.01:0.01:0.1$, with a fixed $\varepsilon=0.05$. In all these numerical tests, only 5 to 10 iteration stages are needed to reach a machine precision. Meanwhile, such a number of iteration will be reduced from 6 to 5 with an increase of $\varepsilon$, or with a decrease of the value of $\dt$. This numerical behavior also agrees with the analysis outlined in Remark~\ref{rem: parameters}.

	\begin{figure}
		\begin{center}
\includegraphics[width=3.0in]{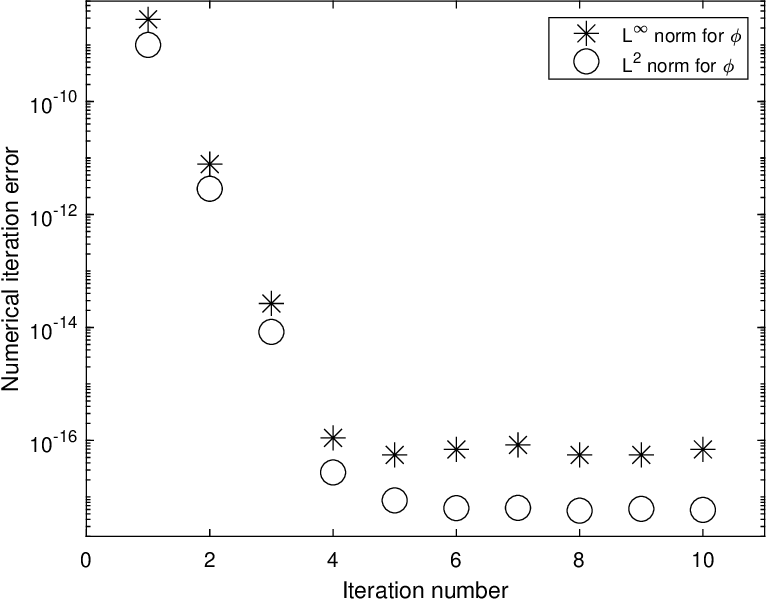}  
	\end{center}	
\caption{The discrete $\ell^\infty$ and $\ell^2$ numerical errors vs. the iteration number, with a spatial resolution $N=256$. The numerical results are obtained by the proposed PSD iteration solver. The time step size and surface diffusion parameters are taken as: $\dt=0.01$, $\varepsilon=0.05$.}
	\label{fig: iteration_1}
	\end{figure}

	\begin{figure}
		\begin{center}
	 \hbox{  	
\includegraphics[width=3.0in]{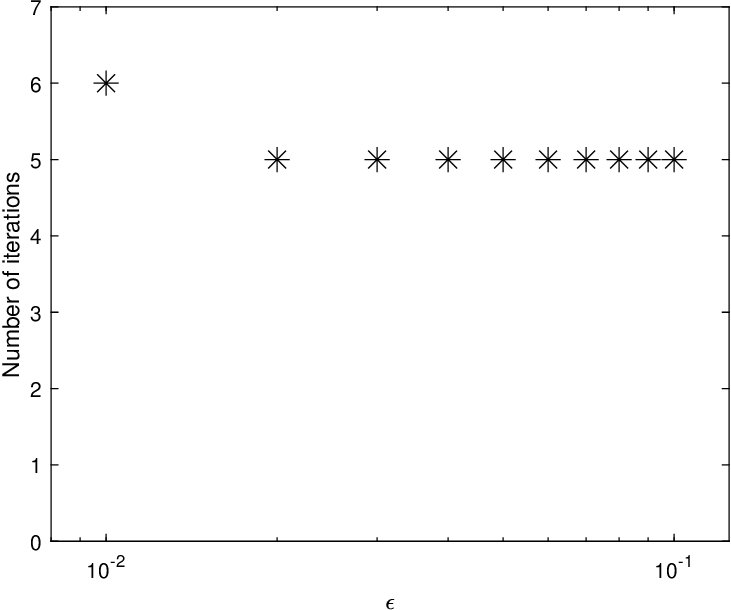}  \hskip 0.2cm
\includegraphics[width=3.0in]{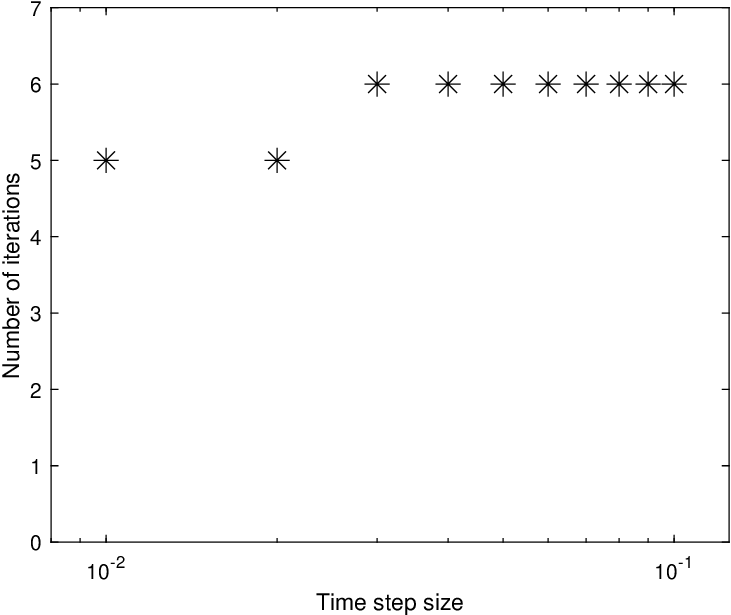} } 
	\end{center}	
\caption{Left: The number of iterations needed to obtain a machine precision for the PSD solver, in terms of $\varepsilon=0.01:0.01:0.1$, with a fixed $\dt=0.01$. Right:  The number of iterations needed to obtain a machine precision for the PSD solver, in terms of $\dt=0.01:0.01:0.1$, with a fixed $\varepsilon=0.05$.}
	\label{fig: iteration_2}
	\end{figure}

Of course, the accuracy test for the fully implemented numerical scheme is also very important. We fix the spatial resolution as $N=512$ (with $h=\frac{1}{512}$), so that the spatial numerical error is negligible. The final time is set as $T=1$, and the surface diffusion parameter is given by $\varepsilon=0.5$, while the expansive parameter is set as $\theta_0 = 2$. 
A sequence of time step sizes are taken as $\dt=\frac T{N_T}$ with $N_T = 100:100:1000$.  The expected temporal numerical accuracy assumption $e=C \dt$ indicates that $\ln |e|=\ln (CT) - \ln N_T$, so that we plot $\ln |e|$ versus $\ln N_T$ to demonstrate the temporal convergence order. The fitted line displayed in Figure~\ref{fig1} shows an approximate slope of $-0.9938$, which in turn verifies a nice first order temporal convergence in both the discrete $\ell^2$ and $\ell^\infty$ norms. 

	\begin{figure}
	\begin{center}
\includegraphics[width=3.0in]{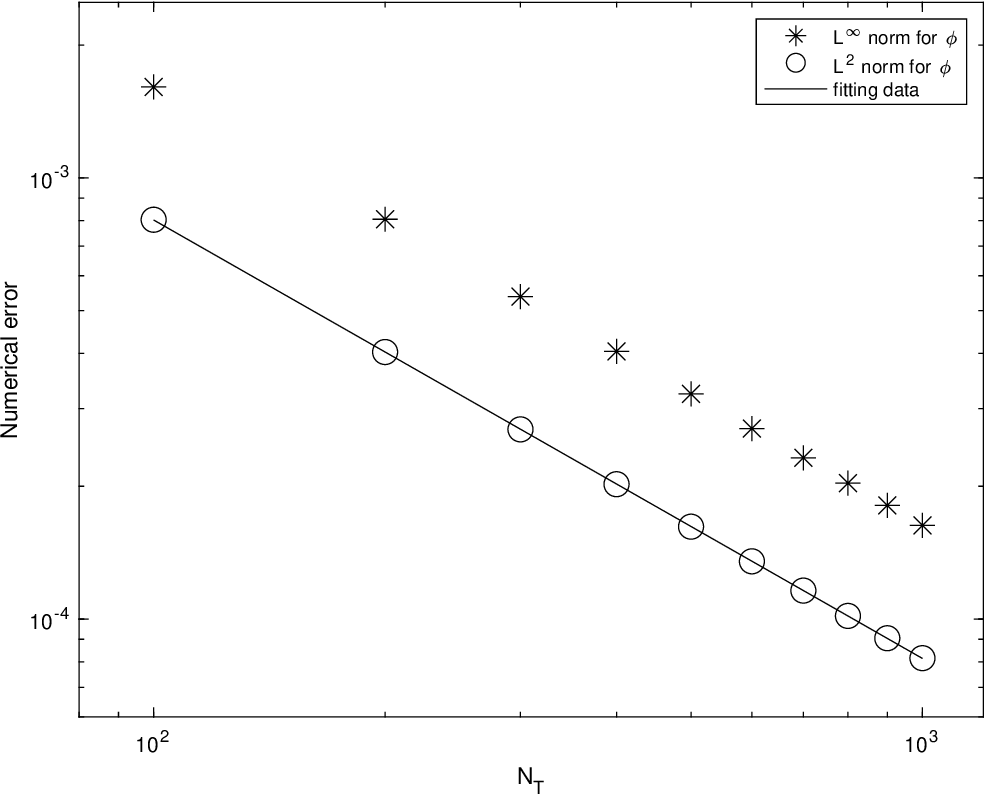}
	\end{center}
\caption{The discrete $\ell^2$ and $\ell^\infty$ numerical errors versus temporal resolution  $N_T$ for $N_T = 100:100:1000$, with a spatial resolution of $N=512$. The numerical results are obtained by the computation using the proposed PSD iteration solver to the numerical scheme~\eqref{eqn:scheme}. The surface diffusion parameter is taken to be $\varepsilon=0.5$, and the expansive parameter is set as $\theta_0 = 2$. The data lie roughly on curves $CN_T^{-1}$ for appropriate choices of $C$, confirming the full first order accuracy of the scheme.}
	\label{fig1}
	\end{figure}

\subsection{Numerical simulation of coarsening processes} 

In this subsection, a two-dimensional numerical simulation of the coarsening process is presented. The computational domain is set as $\Omega =[0,1]^2$, the expansive parameter is chosen to be $\theta_0=3$, and the interface width parameter is taken as $\varepsilon=0.005$. Meanwhile, a random initial data is chosen: 
	\begin{equation}
	\label{initial data-2}
\phi^0_{i,j}= 0.1 + 0.05 \cdot (2r_{i,j}-1), \quad \mbox{$r_{i,j}$ are uniformly distributed random numbers in $[0, 1]$} .  
	\end{equation}
Such a random initial data contains a wide spectrum of wave lengths in the Fourier expansion, so that many interesting structures will be observed in the long time simulation, in comparison with a smooth initial data. Meanwhile, although such a random initial data is only of $L^2 (\Omega)$ regularity, the constant-coefficient surface diffusion term would create a smooth solution within a short time interval, due to the parabolic nature of the PDE. Also see the related works~\cite{ChenN16, cheng2020b, Promislow1991}, in which a local-in-time Gevrey regularity (with real analytic regularity) solution has been established for certain gradient flow models, even if the initial data is only of $H^1$ or $H^2$ regularity. Extensive numerical experiments~\cite{chen22a} have also indicated a smooth solution profile after a very short initial time interval, with a random initial data. As a result, all the theoretical analysis in this article is expected to be valid in this numerical simulation.  

Again, the proposed PSD iteration solver is applied to implement the numerical scheme~\eqref{eqn:scheme} in this simulation. In the coarsening process, increasing values of $\dt$ are taken in the time evolution: $\dt = 5 \times 10^{-5}$ on the time interval $[0,1]$, $\dt = 10^{-4}$ on the time interval $[1, 3]$, $\dt = 2 \times 10^{-4}$ on the time interval $[3, 7]$, and $\dt = 5 \times 10^{-4}$ on the time interval $[7, 15]$. Whenever a new time step size is applied, we initiate the two-step numerical scheme by  taking $\phi^{-1} = \phi^0$, with the initial data $\phi^0$ given by the final time output of the last time period. The time snapshots of the evolution with $\varepsilon=0.005$ are displayed in Figure~\ref{fig3}, with significant coarsening observed in the system.  At the earlier time steps, many small structures are present.  At the final time, $T= 15$, a single structure emerges, and further coarsening is not possible. 


\begin{figure}[h]
	\begin{center}
		\begin{subfigure}{0.48\textwidth}
			\includegraphics[height=0.48\textwidth,width=0.48\textwidth]{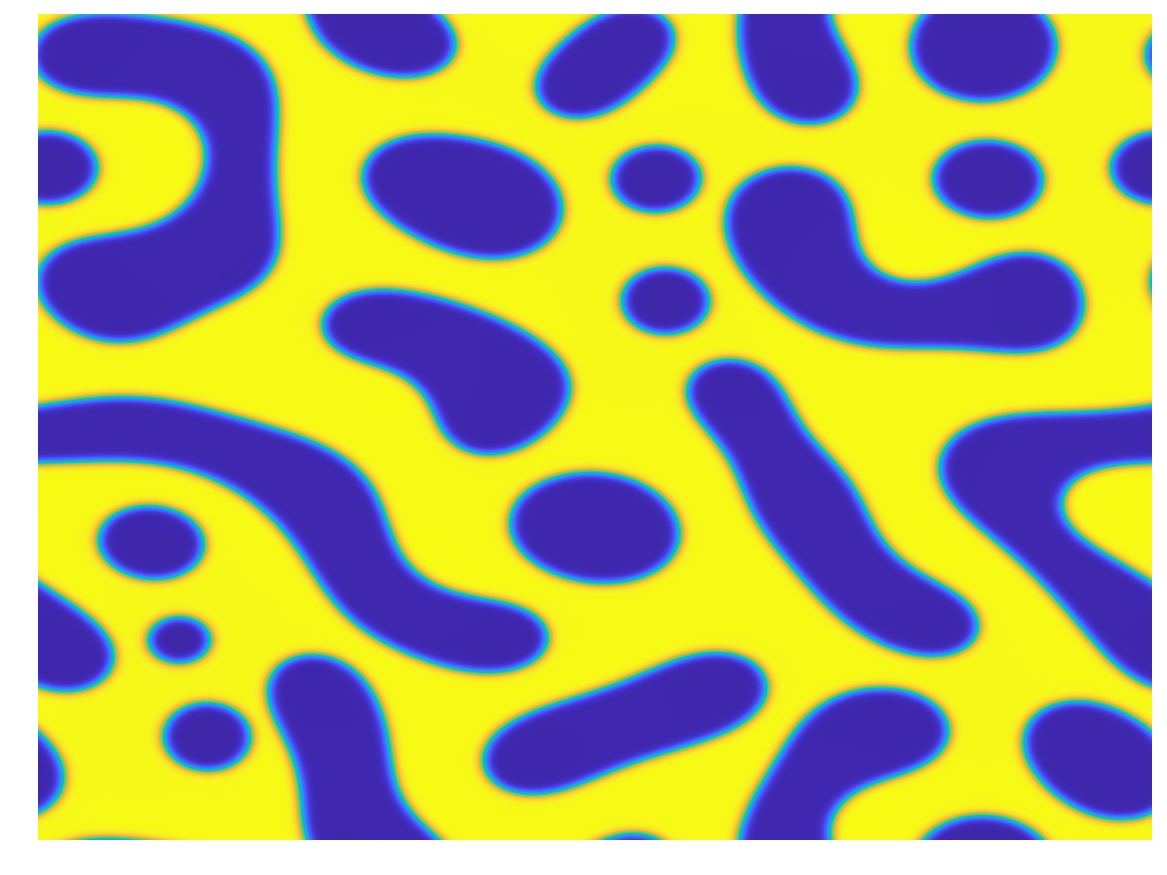}
			\includegraphics[height=0.48\textwidth,width=0.48\textwidth]{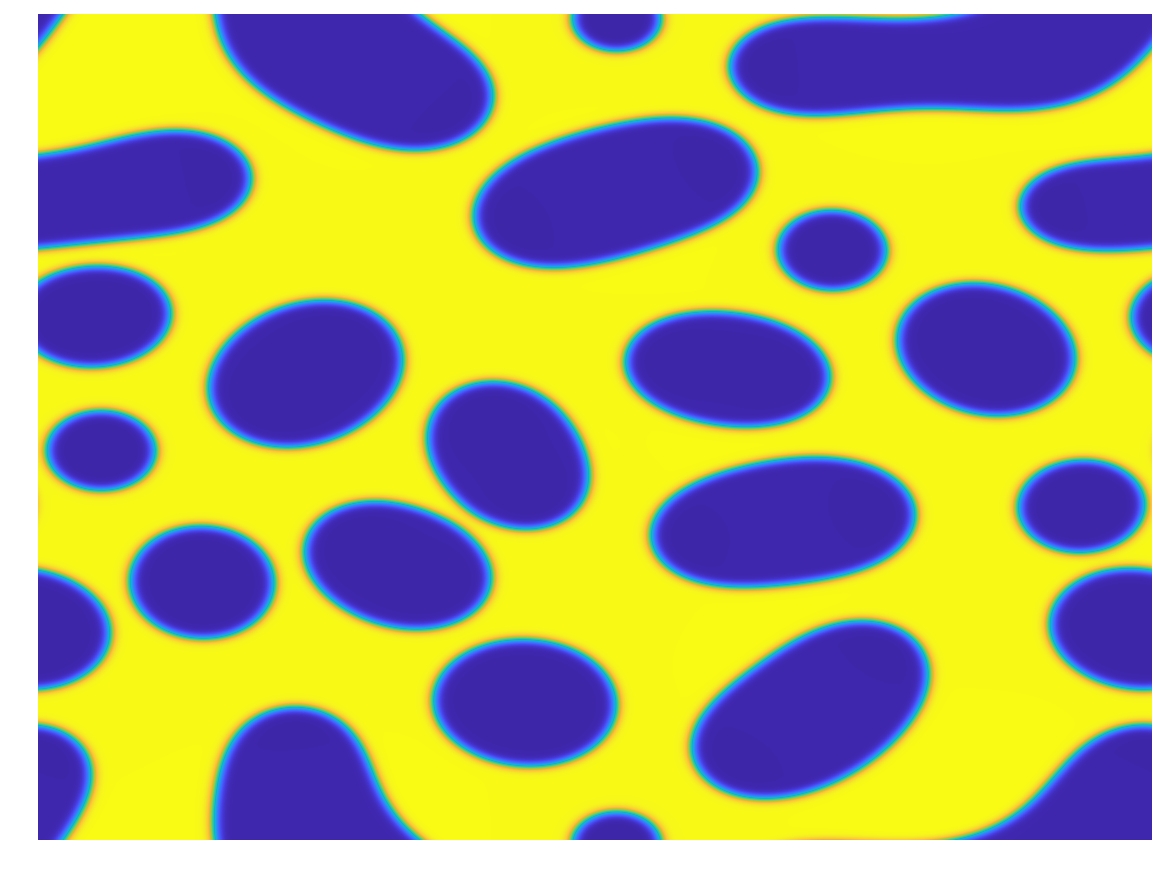}
			\caption*{$t=0.05, 0.1$}
		\end{subfigure}
		\begin{subfigure}{0.48\textwidth}
			\includegraphics[height=0.48\textwidth,width=0.48\textwidth]{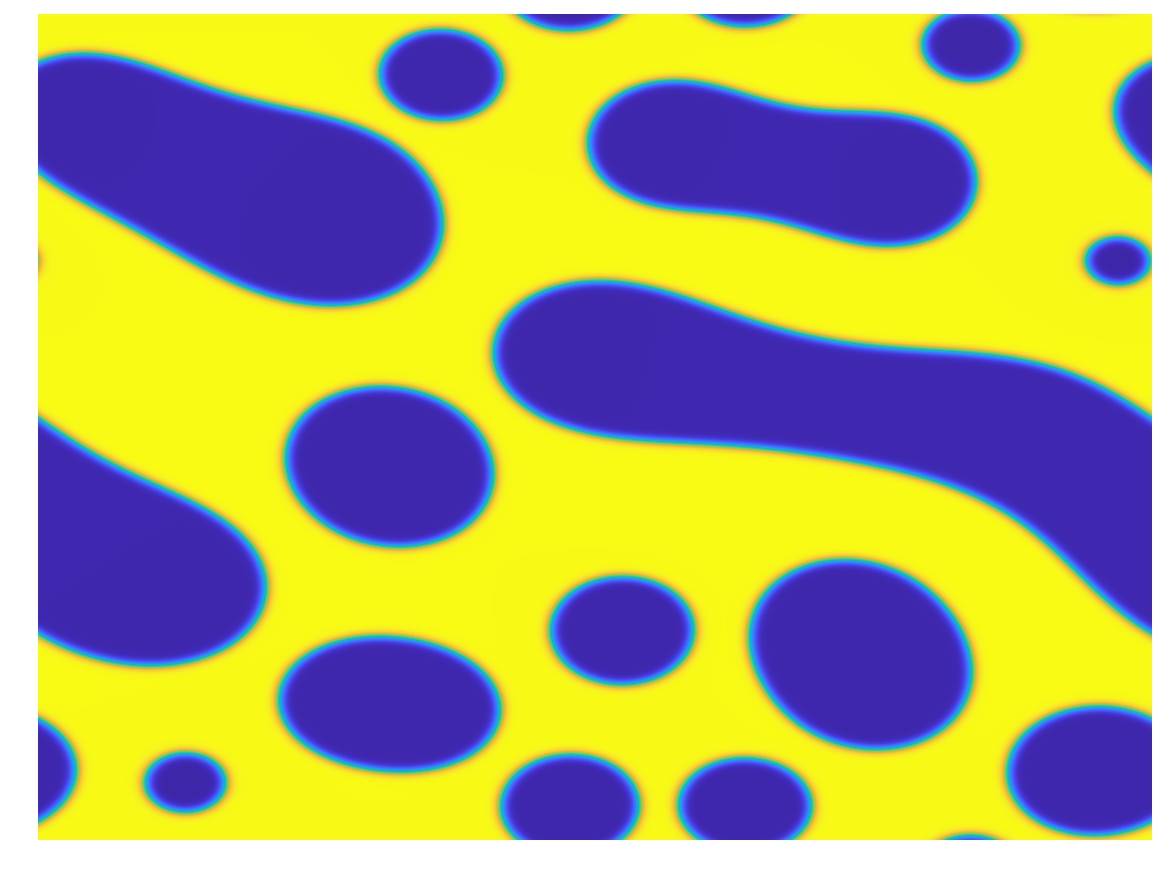}
			\includegraphics[height=0.48\textwidth,width=0.48\textwidth]{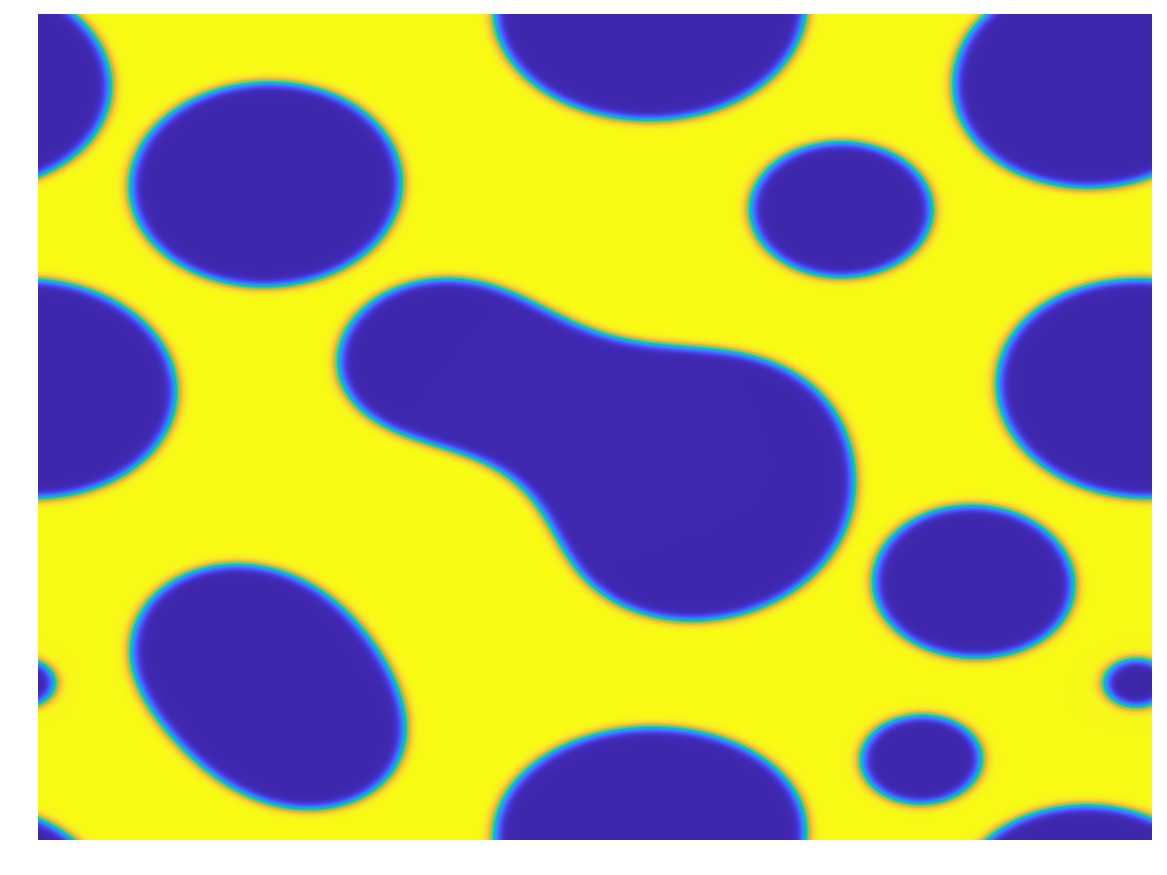}
			\caption*{$t=0.2, 0.5$}
		\end{subfigure}
		\begin{subfigure}{0.48\textwidth}
			\includegraphics[height=0.48\textwidth,width=0.48\textwidth]{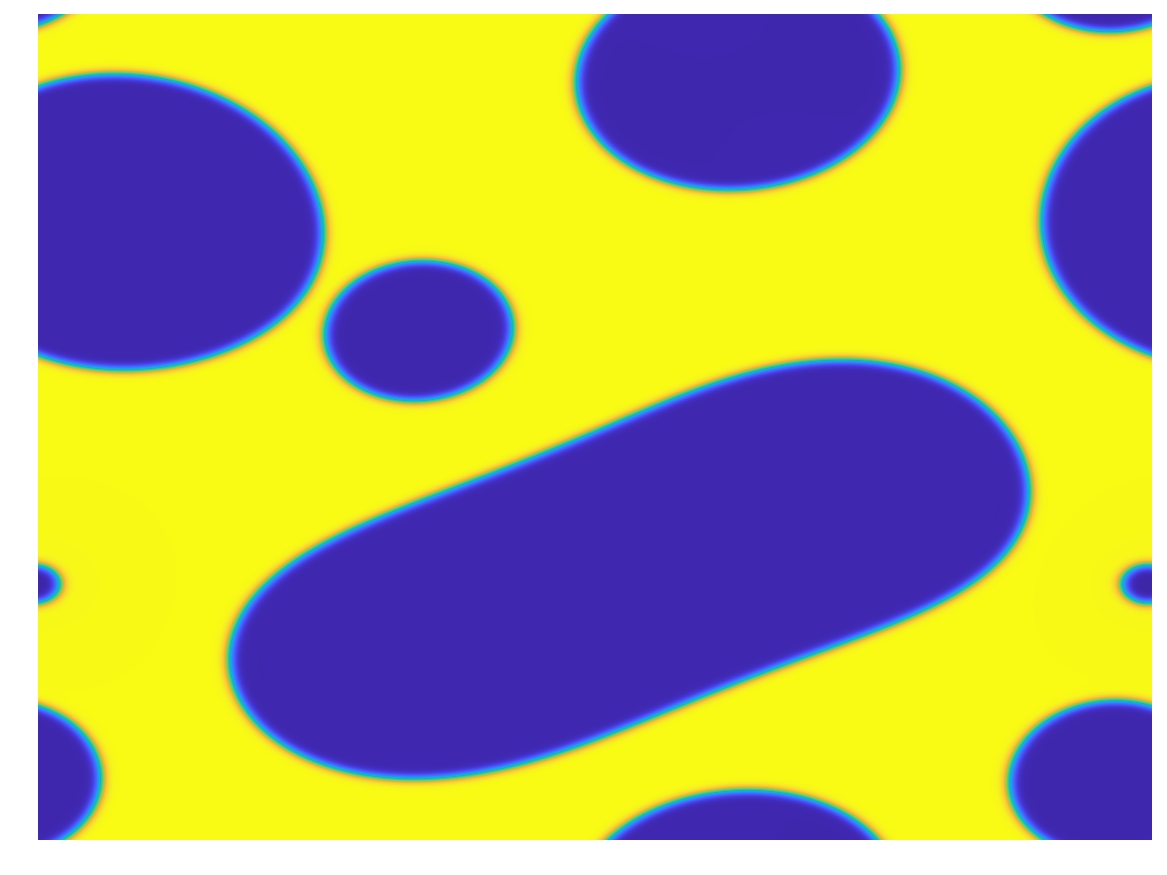}
			\includegraphics[height=0.48\textwidth,width=0.48\textwidth]{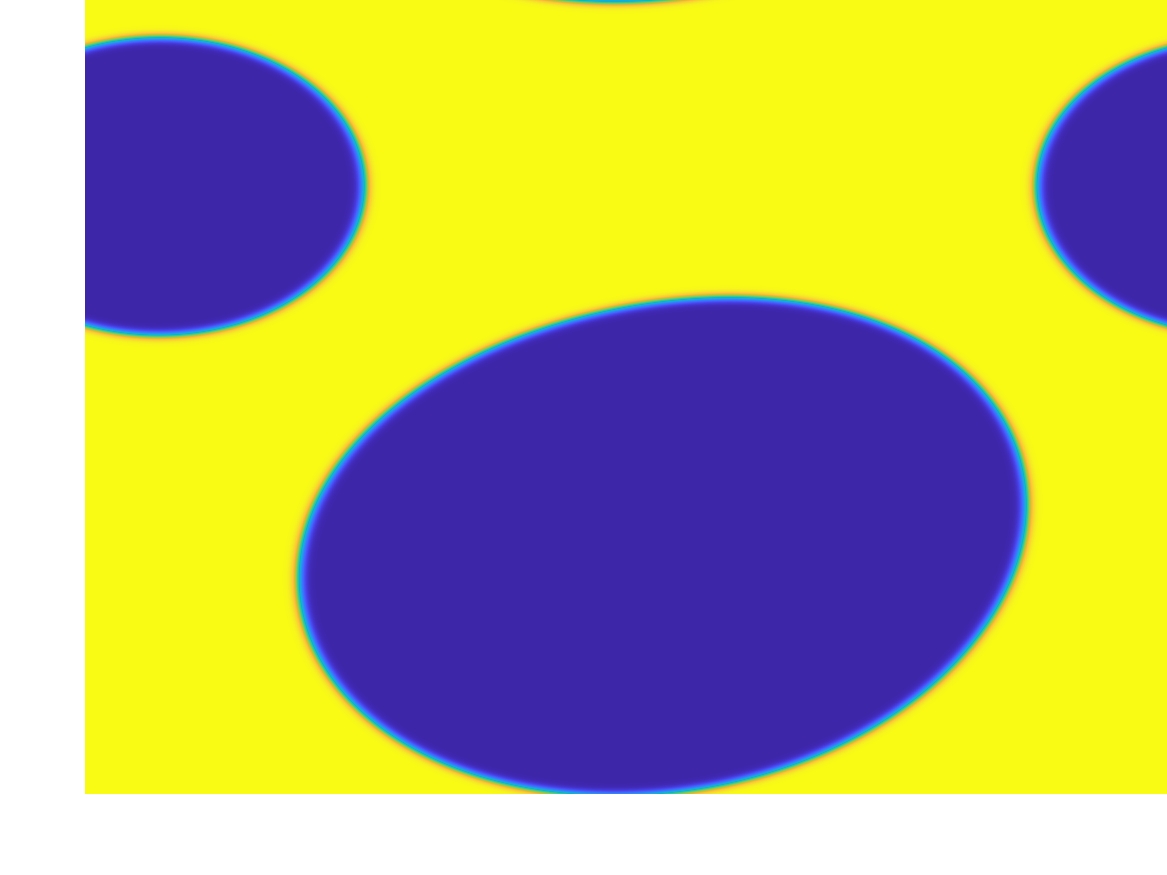}
			\caption*{$t=1, 3$}
		\end{subfigure}
		\begin{subfigure}{0.48\textwidth}
			\includegraphics[height=0.48\textwidth,width=0.48\textwidth]{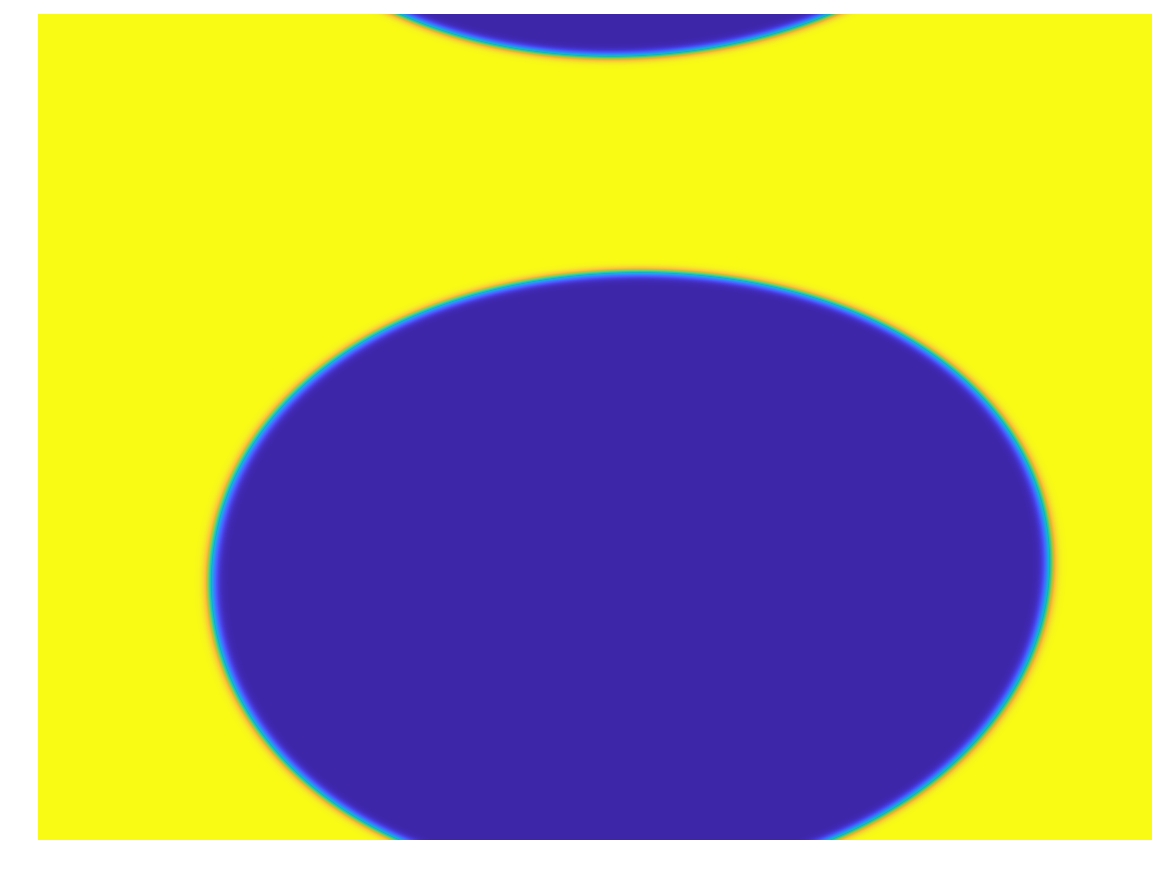}
			\includegraphics[height=0.48\textwidth,width=0.48\textwidth]{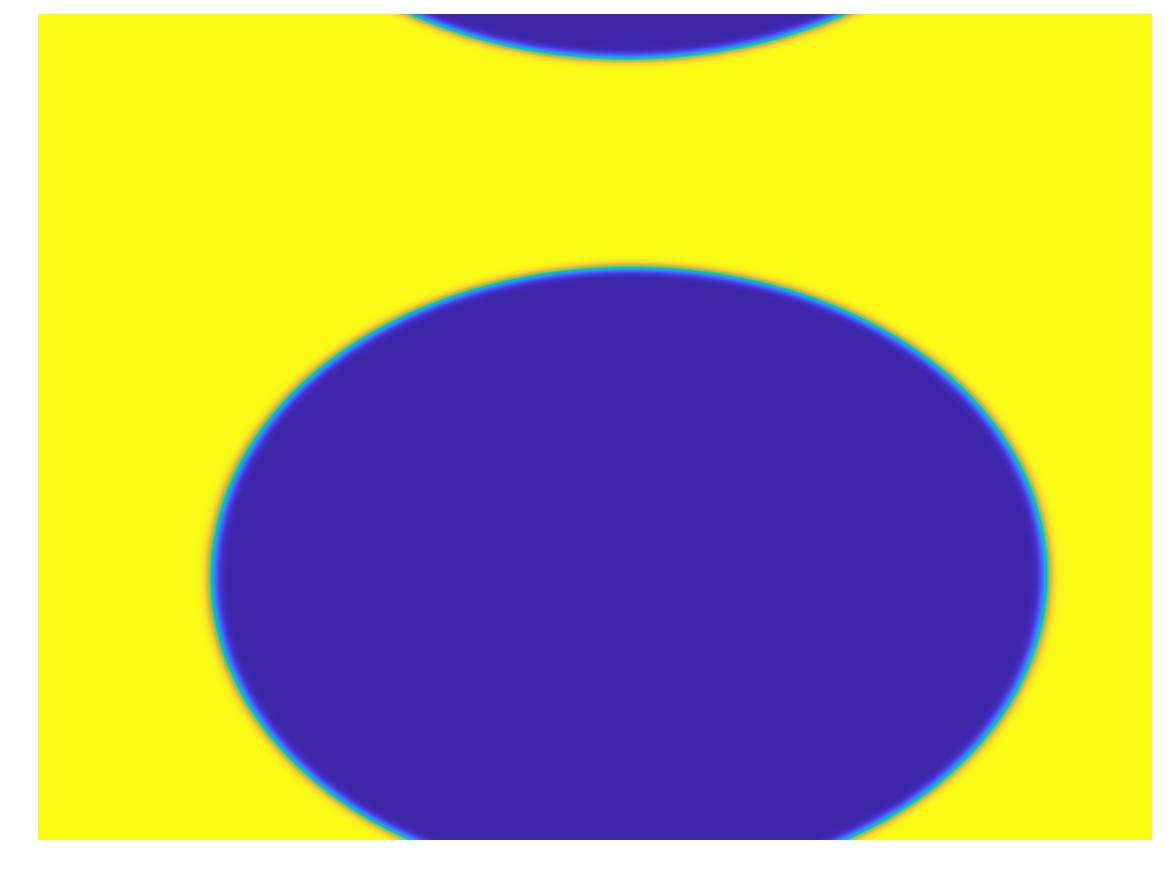}
			\caption*{$t=7, 15$}
		\end{subfigure}
\caption{(Color online.) Snapshots of the phase variable at the indicated time instants over the domain $\Omega = [0,1]^2$, $\varepsilon = 0.005$, $\theta_0=3$, with a constant mobility ${\cal M} \equiv 1$. }
		\label{fig3}
	\end{center}
\end{figure}

To investigate whether the strict separation property is satisfied for the proposed numerical solver, we display the maximum and minimum values of the phase variable at the associated time sequence in Table 1. A safe distance, with an order of $O (10^{-1})$, between the numerical solution and the singular limit values of $\pm 1$, is clearly observed in the simulation. This numerical result also confirms the strict separation estimate established in the theoretical analysis~\cite{abels07, debussche95}. In fact, the spatially uniform equilibria solution turns out to be $\phi_* \equiv 0.8586$ 
for such an expansive parameter value of $\theta_0=3$. The maximum and minimum values in Table 1 reveal that, the numerical solution in principle stays within the interval $[- \phi_*, \phi_*]$, 
while a minor deviation of order $ O(10^{-2})$ is observed from time to time. Such a minor deviation comes from the fact that, the Cahn-Hilliard equation does not preserve the bound of $[- \phi_*, \phi_*]$, in comparison with the Allen-Cahn equation, in which the maximum principle could be rigorously justified. Instead, the 2-D Cahn-Hilliard equation preserves a separation property, $-1 + \epsilon_0 \le \phi \le 1 - \epsilon_0$, in which $\epsilon_0$ depends on $\varepsilon$ and $\theta_0$, while $1 - \epsilon_0 \ne \phi_*$.  

 \begin{table}[!htb] 
\begin{center}
\begin{tabular}{ccc}
\hline  {\bf Time instants} 
                           & {\bf the maximum value}   
                           & {\bf the minimum value}         \\
\hline $t_1 =0.05$  & 0.8643153 & -0.8744858  \\
  $t_2 =0.1$   & 0.8602455  &  -0.8700498   \\
   $t_3 =0.2$  & 0.8589113  &  -0.8740629  \\
    $t_4 =0.5$  & 0.8598682  &  -0.8645398  \\  
    $t_5 =1$  & 0.8577464  &  -0.8755585  \\  
   $t_6 = 3$  & 0.8571105 &  -0.8611945  \\    
   $t_7 =7$  & 0.857263  &  -0.8600106  \\  
   $t_8 = 15$  & 0.8571818 & -0.8599258 \\    
\hline
\end{tabular}
\caption{The maximum and minimum values of of the phase variable at the indicated time instants over the domain $\Omega = [0,1]^2$, $\varepsilon = 0.005$, $\theta_0=3$.}
\end{center}
 \label{table:1} 
\end{table}

Furthermore, the long time characteristics of the solution, such as the energy decay rate, are of great scientific interest. The $t^{-1/3}$ energy decay scaling law has been reported for the Cahn-Hilliard flow with a polynomial approximation energy potential, at both the theoretical and numerical levels~\cite{cheng2019a, cheng16a, kohn03}. Meanwhile, such a theoretical analysis has not been available for the energy potential with Flory-Huggins logarithmic energy potential. A numerical experiment for a $t^{-b^*}$ (with $b^*$ close to $-\frac13$) scaling law was reported in a recent work~\cite{chen22a}, based on a second order accurate scheme for the Flory-Huggins-Cahn-Hilliard flow. In this article, we provide numerical evidence of this scaling law. Figure~\ref{fig:energy evolution} presents the log-log plot for the energy versus time, based on the PSD iteration solver for the numerical scheme~\eqref{eqn:scheme}. The detailed scaling ``exponent" is obtained using least squares fits of the computed data up to time $t=100$.  A clear observation of the $a_e t^{b_e}$ scaling law can be made,  with $a_e = 0.01933$, $b_e=-0.3271$. 

	\begin{figure}
	\begin{center}
\includegraphics[width=3.0in]{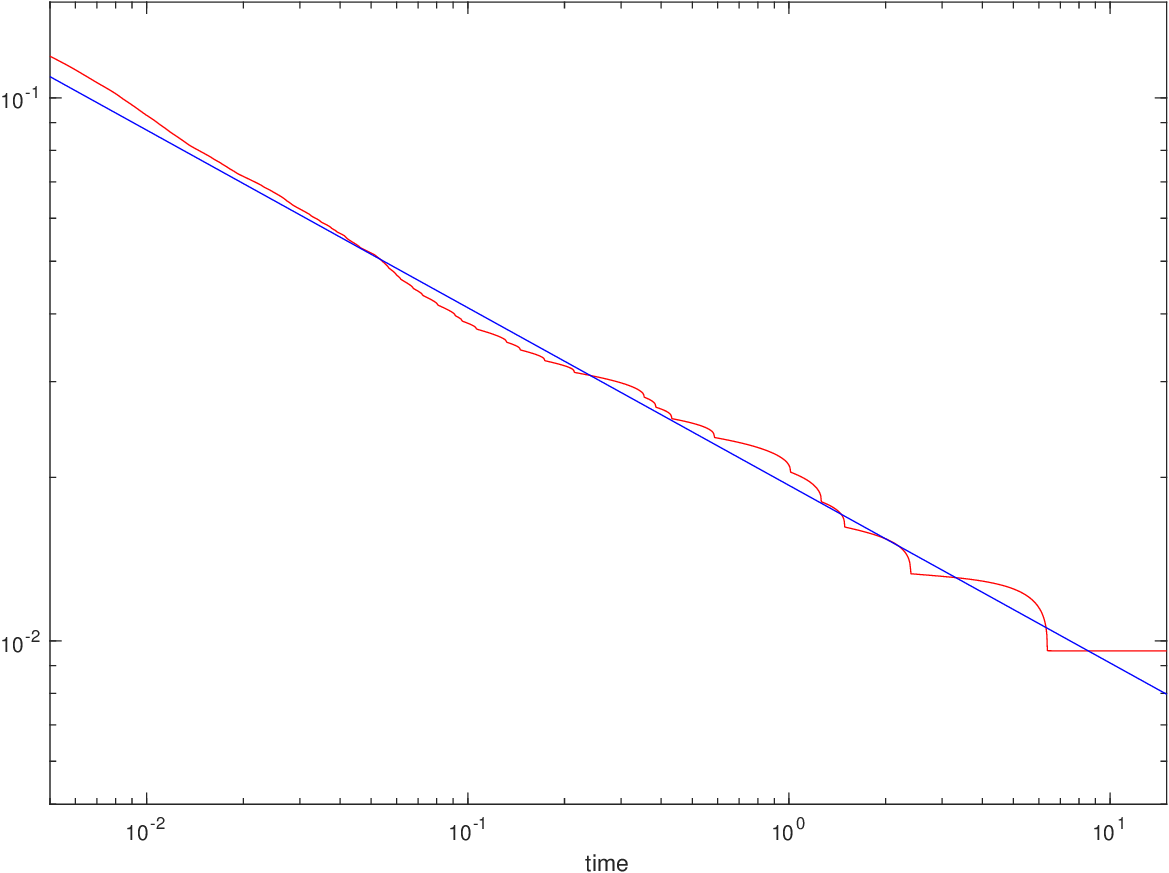}
	\end{center}
\caption{Log-log plot of the temporal evolution of the discrete energy for $\varepsilon=0.005$, $\theta_0=3$, with a constant mobility ${\cal M} \equiv 1$.  The energy decreases similar to $a_e t^{b_e}$ until saturation. The red line represents the energy plot obtained by the simulations, while the straight blue line is obtained by least squares approximations to the energy data.  The least squares fit is only taken for the linear part of the calculated data, and only up to $t=100$. The fitted line has the form $a_e t^{b_e}$, with $a_e = 0.01933$, $b_e=-0.3271$.}
 	\label{fig:energy evolution}
 	\end{figure}

\section{Concluding remarks}  \label{sec:conclusion}

In this article, the preconditioned steepest descent (PSD) iteration solver is considered to implement a finite difference  numerical scheme for the Cahn-Hilliard equation with Flory-Huggins energy potential. A convex-concave decomposition is applied to the energy functional, and the convex splitting numerical approximation to the chemical potential:  implicit treatment for the singular logarithmic term and the surface diffusion term, combined with an explicit update for the expansive concave term. The positivity-preserving analysis, unconditional energy stability, and the optimal rate error estimate have been theoretically derived in a recent work. In terms of the numerical implementation of this nonlinear and singular numerical scheme, we propose a preconditioned steepest descent iteration solver in the computation, based on the fact that the implicit parts of the numerical scheme are associated with a strictly convex energy. This iteration solver consists of a computation of the search direction (involved with a Poisson-like equation), and a one-parameter optimization over the search direction. At a theoretical level, a geometric convergence rate is proved for the PSD iteration, and the positivity-preserving property is theoretically established at each iteration stage in the process. Moreover, a uniform distance estimate between the numerical solution and the singular limit values of  $\pm 1$ for the phase variable has played an essential role in the theoretical analysis. Such an iteration convergence analysis and positivity-preserving analysis is a first for a phase field model with a singular energy potential. A few numerical examples are presented to demonstrate the robustness and efficiency of the PSD solver.

	\section*{Acknowledgements}
This work is supported in part by the National Science Foundation (USA) grants NSF DMS-2012269 and DMS-2309548 (C.~Wang), NSF DMS-2012634 and DMS-2309547 (S.M.~Wise), and NSF DMS-2110768 (A.~Diegel).

\bibliographystyle{plain}
	\bibliography{revision2}

	\end{document}